\documentclass[reqno, 12pt]{amsart}

\title{Clique factors in randomly perturbed graphs: the transition points}
\author{ Sylwia Antoniuk} 
    \address{Department of Discrete Mathematics, Faculty of Mathematics and CS, Adam Mickiewicz University,
Poznań, Poland}
    \email{sylwia.antoniuk@amu.edu.pl}
 \author {Nina Kam\v{c}ev}
\address{Department of Mathematics, Faculty of Science, University of Zagreb, 10000 Zagreb, Croatia}
\email{nina.kamcev@math.hr}
\author{
	Christian Reiher}
        \address{Fachbereich Mathematik, Universität Hamburg, Hamburg, Germany}
    \email{christian.reiher@uni-hamburg.de}

\makeatletter
\let\origsection=\section \def\section{\@ifstar{\origsection*}{\mysection}} 
\def\mysection{\@startsection{section}{1}\z@{.7\linespacing\@plus\linespacing}{.5\linespacing}{\normalfont\scshape\centering\S\hspace{1pt}}}
\makeatother        

\usepackage{amsmath,amssymb,amsthm}
\usepackage{mathrsfs}
\usepackage{mathabx}\changenotsign
\usepackage{dsfont}
\usepackage{url}

\usepackage{xcolor}
\usepackage[backref]{hyperref}
\hypersetup{
colorlinks,
linkcolor={red!60!black},
citecolor={green!60!black},
urlcolor={blue!60!black}
}

\usepackage[open,openlevel=2,atend]{bookmark}

\usepackage{doi}



\usepackage{natbib}
    %
    %
    %
    %
    \def\newblock{\ }%
    \bibpunct[, ]{[}{]}{,}{n}{}{,}%

\usepackage{hyperref}
\usepackage[T1]{fontenc}
\usepackage{lmodern}
\usepackage[babel]{microtype}
\usepackage[english]{babel}

\usepackage{tikz}
\usetikzlibrary{calc,decorations.pathmorphing}
\pgfdeclarelayer{background}
\pgfdeclarelayer{foreground}
\pgfdeclarelayer{front}
\pgfsetlayers{background,main,foreground,front}

\linespread{1.2}
\usepackage{geometry}
\geometry{left=27.5mm,right=27.5mm, top=25mm, bottom=25mm}

\numberwithin{equation}{section}
\numberwithin{figure}{section}

\usepackage{enumitem}

\theoremstyle{plain}
\newtheorem{thm}{Theorem}[section]

\newtheorem{claim}[thm]{Claim}

\newtheorem{lemma}[thm]{Lemma}

\newtheorem{remark}[thm]{Remark}

\theoremstyle{definition}

\let\eps=\varepsilon
\let\theta=\vartheta
\let\rho=\varrho
\let\phi=\varphi

\def\cA{{\mathcal A}}
\def\cB{{\mathcal B}}
\def\cC{{\mathcal C}}

\def\cF{{\mathcal F}}
\def\cH{{\mathcal H}}
\def\cK{{\mathcal K}}

\def\cM{{\mathcal M}}
\def\cP{{\mathcal P}}

\def\cS{{\mathcal S}}
\def\cT{{\mathcal T}}

\def\fA{\mathfrak{A}}
\def \fD{\mathfrak{D}}
\def\fB{\mathfrak{B}}

\usepackage{enumitem}

\renewcommand\labelenumi{(\roman{enumi})}
\renewcommand\theenumi\labelenumi

\def\Sb{\mathbf{S}}



\newcommand{\Var}[1]{\textrm{Var} \left[ #1 \right]}
\newcommand{\Cov}[1]{\textrm{Cov} \left( #1 \right)}
\newcommand{\pr}[1]{\mathbb{P} \left[ #1 \right]}
\newcommand{\er}[1]{\mathbb{E} \left[ #1 \right]}
\newcommand{\E}{\mathbb{E} }
\def \Gnp{G_{n,p}}
\newcommand{\tG}{\Gamma}

\def \Qone{Q} 

\def \nst{n_{*}}


\let\polishlcross=\l
\def\l{\ifmmode\ell\else\polishlcross\fi}

\makeatletter
\def\moverlay{\mathpalette\mov@rlay}
\def\mov@rlay#1#2{\leavevmode\vtop{   \baselineskip\z@skip \lineskiplimit-\maxdimen
	\ialign{\hfil$\m@th#1##$\hfil\cr#2\crcr}}}
\newcommand{\charfusion}[3][\mathord]{
#1{\ifx#1\mathop\vphantom{#2}\fi
	\mathpalette\mov@rlay{#2\cr#3}
}
\ifx#1\mathop\expandafter\displaylimits\fi}
\makeatother

\newcommand{\dcup}{\charfusion[\mathbin]{\cup}{\cdot}}

\newcommand{\vrhup}[1]{\scaleobj{0.6}{\scalerel*{\rightharpoonup}{#1}}}
\newcommand{\nrhup}{\mathord{\scaleobj{0.6}{\scalerel*{\rightharpoonup}{x}}}}
\newcommand{\wrhup}{\scaleobj{0.6}{\scalerel*{\rightharpoonup}{W}}}
\def\vseq#1{\ThisStyle{  \mathord{\vbox{\offinterlineskip\ialign{    \hfil##\hfil\cr
				$\SavedStyle{}_{\smash{\vrhup#1}}$\cr
				\noalign{\kern-0.7\scriptspace}
				$\SavedStyle#1$\cr}}}}}
\def\seq#1{\ThisStyle{  \mathord{\vbox{\offinterlineskip\ialign{    \hfil##\hfil\cr
				$\SavedStyle{}_{\smash{\nrhup}}$\cr
				\noalign{\kern-0.5\scriptspace}
				$\SavedStyle#1$\cr}}}}}
\def\wseq#1{\ThisStyle{  \mathord{\vbox{\offinterlineskip\ialign{    \hfil##\hfil\cr
				$\SavedStyle{}_{\smash{\wrhup#1}}$\cr
				\noalign{\kern-0.7\scriptspace}
				$\SavedStyle#1$\cr}}}}}

\def \absorber{\cA}

\let\setminus=\smallsetminus
\let\emptyset=\varnothing

\let\to=\lra

\makeatletter
\newcommand{\pushright}[1]{\ifmeasuring@#1\else\omit\hfill$\displaystyle#1$\fi\ignorespaces}
\newcommand{\pushleft}[1]{\ifmeasuring@#1\else\omit$\displaystyle#1$\hfill\fi\ignorespaces}
\makeatother

\let\N=\NN
\let\R=\RR

\let\Hc=\cH

\let\Pc=\cP
\let\Jn=J

\newcommand{\emb}{\textup{Emb}}


\begin{document}

\keywords{Random graphs, thresholds}
\subjclass[2020]{05C80 (primary), 05D10, 05C55 (secondary)}

\begin{abstract}
    A \emph{randomly perturbed graph} $G^p = G_\alpha \cup \Gnp$ is obtained by taking a~deterministic $n$-vertex graph $G_\alpha = (V, E)$ with minimum degree $\delta(G)\geq \alpha n$  
and adding the edges of the binomial random graph $\Gnp$ defined on the same vertex set $V$. For which value $p$ (depending on $\alpha$) does the graph $G^p$ contain a $K_r$-factor (a spanning collection of vertex-disjoint $K_r$-copies) with high probability?

    The  order of magnitude of the \textit{minimal} value of $p$ has been determined  whenever $\alpha \neq 1- \frac{s}{r}$ for an integer $s$ (see Han, Morris, and Treglown [RSA, 2021] and Balogh, Treglown, and Wagner [CPC, 2019]).
    
    We establish the minimal probability $p_s$ (up to a constant factor) for all values of $\alpha = 1-\frac{s}{r} \leq \frac 12$, and show that the \textit{threshold} exhibits a polynomial jump at $\alpha = 1-\frac{s}{r}$ compared to the surrounding intervals. An extremal example $G_{\alpha}$ which shows that $p_s$ is optimal up to a constant factor differs from the previous (usually multipartite) examples in containing a pseudorandom induced subgraph.
\end{abstract}

\maketitle


\section{Introduction}

Much of research in extremal and probabilistic combinatorics revolves around establishing sufficient conditions for a graph $G$ to contain a specific (spanning) subgraph, such as a~Hamilton cycle, a specific spanning tree or  a $K_r$-factor (a spanning collection of vertex-disjoint $K_r$-copies), for instance~\cite{dirac52,Hajnal1970,jkv08,Posa1976}. The model of randomly perturbed graphs (introduced by Bohman, Frieze and Martin~\cite{bfm03,bfm04}) is designed to interpolate between the worst-case scenario for the host graph $G$, and a random choice of $G$. Their model is also inspired by \textit{smoothed analysis} of algorithms, which follows the same principle~\cite{st04}. Numerous questions and interesting transition phenomena have been studied in this model, for instance~\cite{adrrs21,balog16,bhkm19,bmpp20,jk20,kks17,kst06}. One reason for its appeal is that the proofs often elegantly combine tools from extremal graph theory with probabilistic arguments. Following~\cite{balog16,bpss23,hmt21}, we will be investigating sufficient conditions for finding a $K_r$-factor in the randomly perturbed model.\footnote{Randomly \textit{augmented} would be the technically more accurate term~\cite{adrrs21,adr23}, since all the cited articles are considering  one-sided perturbations: edges are only added to $G_\alpha$, and never removed. However, the term \textit{perturbed} has prevailed in the literature and we use it henceforth.}

A \emph{randomly perturbed graph} $G^p = G_\alpha \cup \Gnp$ is obtained by taking a deterministic $n$-vertex graph $G_\alpha = (V, E)$ with minimum degree $\delta(G)\geq \alpha n$  
and adding the edges of the binomial random graph $\Gnp$ defined on the same vertex set $V$. The threshold probability for a $K_r$-factor is the lowest value of $p$ which with high probability ensures a $K_r$-factor in $G^p$, for any deterministic graph $G_{\alpha}$. Here, \textit{with high probability} (w.h.p.)\ means with probability tending to 1 with $n\rightarrow\infty$. 
Throughout the introduction, $n$ will be the number of vertices of our graph, and we will assume that it is divisible by $r$.

As mentioned, the model of randomly perturbed graphs bridges the gap between Dirac-type problems and the corresponding thresholds in $\Gnp$ in a natural way. The two ``extreme cases'' ($\alpha = \left(1-\frac{1}{r} \right)$ and $\alpha = 0$) cover classical and difficult results in extremal and probabilistic combinatorics. We remark that the case $r=2$ corresponds to perfect matchings, and the case $r=3$ to triangle factors, which have been studied much earlier (see \cite{ch63,dirac52}). Proving a~conjecture of Erd\H os, Hajnal and Szemer\'edi \cite{Hajnal1970} showed that  for $\alpha \geq \left(1-\frac{1}{r} \right)$ and sufficiently large $n$, any graph $G_\alpha$ with minimum degree $\alpha n$ contains a $K_r$-factor, so no random edges are needed (or, formally, $p=0$ suffices). An example showing that a smaller value of $\alpha$ does not suffice is a graph consisting of an independent set of size $n/r + 1$ and all the remaining edges. On the other hand, a celebrated result of Johansson, Kahn and Vu~\cite{jkv08} states that for $p = Cn^{-2/r}(\log n)^{2/(r(r-1))}$, $\Gnp$ contains a $K_r$-factor with high probability. The fact that this is the correct order of magnitude for the threshold probability follows from the necessary condition that every vertex is contained in a copy of $K_r$. Another phenomenon which is ubiquitous in this area is that if one only requires $(1-\eps)n/r$ vertex-disjoint $K_r$ copies in $\Gnp$, the probability $Cn^{-2/r}$ already suffices~\cite{rucinski92}.

The threshold for a $K_r$-factor in the randomly perturbed model $G_\alpha \cup \Gnp$ has been determined for all but finitely many values of $\alpha$ by Han, Morris and Treglown~\cite{hmt21}. The  case of $\alpha \in (0,1/r)$ had been resolved earlier by Balogh, Treglown and Wagner~\cite{btw18}.

\begin{thm}
    [Clique-factors in randomly perturbed graphs~\cite{btw18}, \cite{hmt21}]
    \label{thm:hmt}
    For any integers $1 < s \leq r$ and any $\alpha \in \left(1-\frac{s}{r}, 1-\frac{s-1}{r} \right)$, there exists $C$ such that the following holds. For any $n$-vertex graph $G$ with minimum degree at least $\alpha n$, w.h.p.\ $G \cup \Gnp$ with $p = Cn^{-2/s}$ contains a $K_r$-factor.
\end{thm}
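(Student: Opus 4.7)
The plan is to apply the \emph{absorbing method}. I would first split the random edges as $\Gnp=G_{n,p_1}\cup G_{n,p_2}$ with independent $p_1,p_2=\Theta(p)$, and organise the argument into three stages: (i) build an \emph{absorbing set} $A\subseteq V$ of size $o(n)$ in $G\cup G_{n,p_1}$ with the property that for every $R\subseteq V\setminus A$ of size $o(n)$ with $|A\cup R|$ divisible by $r$, the graph $(G\cup G_{n,p_1})[A\cup R]$ admits a $K_r$-factor; (ii) find an almost-perfect $K_r$-tiling of $(G\cup G_{n,p_2})[V\setminus A]$ that covers all but $o(n)$ vertices; (iii) absorb the $o(n)$ leftover vertices into $A$ via the property from (i), completing the $K_r$-factor of $G^p$.

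For (i), the absorber is assembled from per-vertex gadgets. For each $v\in V$ one fixes a ``swap'' configuration $X_v$ that allows $v$ to be inserted into or removed from a local $K_r$-factor: typically, an $s$-clique of $G_{n,p_1}$ combined with $r-s$ vertices drawn from the common $G$-neighbourhood of $v$ and of that $s$-clique. Since $p_1=\Theta(n^{-2/s})$ puts $\Theta(n)$ copies of $K_s$ into $G_{n,p_1}$ by a first-moment computation, and the minimum-degree condition $\delta(G)\geq\alpha n>(1-s/r)n$ supplies many compatible common neighbourhoods, each vertex has polynomially many candidate absorbers. A standard random-sampling argument à la Rödl--Ruciński--Szemerédi then extracts a small subcollection forming an absorbing set robust enough to handle any leftover~$R$ of size $o(n)$.

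For (ii), I would set up the auxiliary $r$-uniform hypergraph $\cH$ on $V\setminus A$ whose edges are the vertex sets of $K_r$-copies in $G\cup G_{n,p_2}$. Each such $K_r$ decomposes into a $K_s$ in $G_{n,p_2}$ and an $(r-s)$-set whose internal and cross-edges with the $K_s$ all lie in $G$. The expected number of hyperedges is $\Theta(n^r p_2^{\binom{s}{2}})=\Theta(n^{r-s+1})$, and the expected degree of each vertex in $\cH$ is $\Theta(n^{r-s})$. I would apply a nibble-type matching theorem (e.g.\ Pippenger--Spencer) to extract a near-perfect matching, provided the degree and codegree regularity of $\cH$ can be verified.

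The main obstacle lies in step (ii). Under $\alpha\in(1-s/r,1-(s-1)/r)$, not every $s$-set of $V$ has a non-empty common $G$-neighbourhood: inclusion-exclusion gives only $(1-s(1-\alpha))n$, which is negative near the lower endpoint. Hence one cannot uniformly extend the random $K_s$-copies of $G_{n,p_2}$ to $K_r$'s, and must instead either restrict attention to the ``typical'' extendable $s$-sets (and exploit the pseudorandomness of $G_{n,p_2}$ to bound codegrees in $\cH$), or first construct a fractional $K_r$-tiling of $G$ from the minimum-degree condition and round it integrally using the random edges. Either way, the heart of the proof is the codegree/regularity analysis of $\cH$ -- ruling out local concentrations of $K_r$-hyperedges that would block the nibble -- and it is precisely here that the probability $p=Cn^{-2/s}$ is used tightly, also pinning down the required constant~$C$.
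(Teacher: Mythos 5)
Theorem~\ref{thm:hmt} is not proved in this paper at all: it is quoted from~\cite{btw18,hmt21}, and the present paper only handles the boundary case $\alpha=1-\frac sr$ by a quite different route (extremal/non-extremal dichotomy, spread measures, regularity). So the relevant comparison is with the cited proofs, whose overall architecture (absorption plus an almost-perfect tiling) your plan does share. However, both of your main steps have concrete gaps. For step (i), your per-vertex gadget requires $r-s-1$ further vertices in the common $G$-neighbourhood of $\{v\}$ together with a random $K_s$-copy $S\subseteq N_G(v)$. With $\delta(G)\geq \alpha n$ and $\alpha$ just above $1-\frac sr$, the common neighbourhood of an $(s+1)$-set is only guaranteed to have size $\bigl(1-(s+1)(1-\alpha)\bigr)n$, which is negative whenever $s(s+1)>r$ (e.g.\ already for $r=3$, $s=2$). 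This is exactly the obstacle you concede for step (ii), but it equally invalidates the claim that every vertex has ``polynomially many candidate absorbers''; worse, a uniformly random candidate set is an absorber with probability at most $O\bigl(p^{\binom s2}\bigr)=O\bigl(n^{-(s-1)}\bigr)$, since every gadget must contain an $s$-clique of $\Gnp$, so the ``standard random-sampling argument \`a la R\"odl--Ruci\'nski--Szemer\'edi'' does not apply as stated. One must select absorbers inside the (sparse) random structure while coordinating with the structure of $G$; this is precisely why~\cite{hmt21} use a lattice-based absorbing argument rather than vanilla vertex-absorbers.

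For step (ii), the Pippenger--Spencer nibble needs near-regular degrees and all codegrees $o(D)$, and your auxiliary hypergraph $\cH$ satisfies neither. If $u$ and $v$ lie together in a $K_s$-copy of $G_{n,p_2}$ (there are $\Theta(n)$ such pairs), their codegree in $\cH$ is $\Theta(n^{r-s})$, i.e.\ of the same order as the maximum degree, because the whole extension freedom lies in the deterministic $(r-s)$-side; moreover a positive proportion of vertices lie in no random $K_s$-copy at all, so degrees fluctuate by constant factors and by type (random-side versus $G$-side). Hence the nibble cannot be applied to $\cH$ as defined, and the fixes you sketch in the final paragraph (restricting to ``extendable'' $s$-sets, or building a fractional $K_r$-tiling of $G$ and rounding it with the random edges) are not details but the actual substance of the known proofs: one needs regularity-type structural information about $G$ to decide where the scarce random $K_s$-copies can be placed and extended, which is where the hypothesis $\alpha>1-\frac sr$ and the constant $C$ genuinely enter. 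As it stands, the proposal is a reasonable high-level plan that correctly locates the difficulty, but the two steps that would constitute the proof are either incorrect as stated (the absorber count and the nibble hypotheses) or left unexecuted.
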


Let us describe the extremal construction (depicted in Figure~\ref{fig:extremal_1} for $r=4$ and $s=2$) which implies that Theorem~\ref{thm:hmt} is optimal up to a constant factor. Let $\alpha = 1-\frac sr + \gamma$ for $\gamma \in \left(0, \frac 1r\right)$, and let $F_{r,s,\gamma}$ be the graph consisting of an independent set $A$ of order $|A| = \left(\frac{s}{r}-\gamma \right) n$, and all the remaining edges. We have $\delta(F_{r,s,\gamma}) \geq \left(1-\frac sr + \gamma\right)n$. Any $K_r$-factor in $F_{r,s,\gamma}$ would have to use $\Omega(n)$ copies of $K_s$ in $\Gnp[A]$ (as otherwise the $K_r$-factor would use at most $\frac{(s-1)n +o(n)}{r} < |A|$ vertices from $A$), which typically cannot be found  for $p = cn^{-2/s}$ with a small constant $c$. Hence, w.h.p.\,$F_{r,s,\gamma}\cup \Gnp$ does not contain a $K_r$-factor.

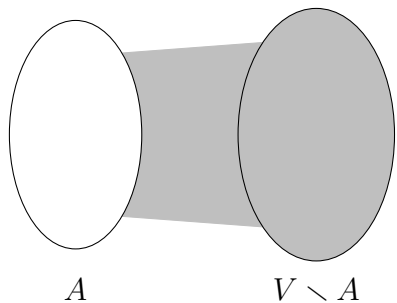
\begin{figure}
    \begin{tikzpicture}[scale=0.8]
        \centering
        \filldraw[color=lightgray] (0,1.3) -- (4,1.6) -- (4,-1.6) -- (0,-1.3) -- (0,1.3);
        \draw[fill=white] (0,0) ellipse (1.1cm and 1.9cm);
        \draw[fill=lightgray] (4,0) ellipse (1.3cm and 2.1cm);
        \node at (0,-2.2) [below] {$A$};
        \node at (4,-2.2) [below] {$V\setminus A$};
    \end{tikzpicture}
\caption{\label{fig:extremal_1}The extremal construction $F_{4,2,\gamma}$. The set $A$ with $|A|=\left(\frac12-\gamma\right)n$ is an independent set in $F_{4,2,\gamma}$. Any $K_4$-factor in $F_{4,2,\gamma}$ would have to use $\Omega(n)$ edges  in $\Gnp[A]$.}
\end{figure}

In case $\alpha = 1- \frac sr$ (i.e.~when $\gamma = 0$), one might expect that an extra logarithmic factor in the threshold probability is needed, analogously to the above-mentioned result of Johansson, Kahn and Vu. Indeed, informally speaking, for $p = cn^{-2/s}(\log n)^{2/(s(s-1))}$ one can show that any $K_r$-factor in $F_{r,s,0}\cup \Gnp$  would \textit{essentially}\footnote{that is, with $polylog(n)$ exceptional vertices} require a perfect $K_s$-factor in $\Gnp[A]$, which again typically does not occur for small $c$. As mentioned before, such phenomena are ubiquitous in the study of random graphs, and have also been observed in other problems on randomly perturbed graphs (squares of Hamilton cycles~\cite{bpss24}, and triangle factors~\cite{bpss23}).

However, B\"ottcher et al.~\cite{bpss23} have pointed out a rather surprising \textit{polynomial} jump in the threshold probability when $\gamma = 0$, $r=4$ and $s=3$ (so $\alpha = \frac 14$), and asked about the correct threshold. We show that this threshold is of order $n^{-3/5}$ --  strictly between $n^{-2/4}$ and $n^{-2/3}$ -- which is the order of magnitude of the threshold probability for $\alpha > \frac 14$ and $\alpha < \frac 14$, respectively. The intuition for such a threshold comes from the following example. Let $ G=G_{1/4}$ be some graph which consists of a set $A$ of size $\frac {3n}{4} +g$ such that $G[A]$ has minimum degree $g$, and all the edges not having both endpoints in $A$. Any $K_4$-factor in $G_{1/4}$ would require at least $g$ copies of $K_4$ in $G[A]$, and this can take as many as $\Omega(n^{-3/5+2})$ random edges for an appropriate choice of $g$ and $G$ (see Theorem~\ref{t:lower-bound} and its proof). 
  The construction from the concluding remarks of~\cite{bpss23} already exhibits this \textit{polynomial jump} in threshold probability, but they did not find a construction for $G_{1/4}$ yielding the correct threshold of order $n^{-3/5}$.

Our main result is establishing the above-mentioned threshold, and a generalisation of the phenomenon for any $r$ and $s \geq r/2$. Before we state it, let us introduce the following functions. Let 
\[\varphi(s) = \frac{2s}{(s-1)(s+2)}\]
and 
\[p_s = p_s(n) = \begin{cases} 
	\frac{\log n}{n} & \text{ if } s=2 \\
	n^{-\varphi(s)} & \text{ if } s\geq 3.
\end{cases} \]

Note that $p_s > n^{-2/s+\eps} $ for $\eps$ sufficiently small.

\begin{thm}\label{t:main}
	For any integers $r$ and $s$ with $r/2 \leq s < r$, there exists $C>0$ such that the following holds. Let $G$ be an $n$-vertex graph with minimum degree at least $\left(1 - \frac{s}{r} \right)n$. For $n$ divisible by $r$ and $p = Cp_s(n)$, w.h.p.\,the graph $G^p = G \cup \Gnp$ has a~$K_r$-factor.
\end{thm}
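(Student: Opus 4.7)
The proof goes via the absorbing method: we reserve a small \emph{absorbing set} $\cA \subseteq V$ with the property that for any small and structurally compatible leftover $W \subseteq V \setminus \cA$, the graph $G^p[\cA \cup W]$ admits a $K_r$-factor; we find an almost $K_r$-factor on $V \setminus \cA$ leaving a tiny leftover $W$; and we conclude by absorbing $W$ through $\cA$. The main innovation over Theorem~\ref{thm:hmt} is that both the absorber and the almost-factor step need to be executed at the sharp probability $p = C p_s(n)$.

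We first dichotomise based on the deterministic graph $G$. If $\delta(G) \geq (1 - s/r + \gamma)n$ for some small constant $\gamma > 0$, Theorem~\ref{thm:hmt} applies with room to spare, since $p_s(n) \gg n^{-2/s}$. Otherwise, a Hajnal--Szemer\'edi-type stability argument produces a subset $A \subseteq V$ of size essentially $(s/r)n$ such that $G[A]$ has low edge density, the bipartite graph between $A$ and $V \setminus A$ is nearly complete, and $G[V \setminus A]$ is nearly complete. In this ``stable'' case every $K_r$-copy of any factor must contain, on average, exactly $s$ vertices in $A$, and the natural building block becomes a $K_s$ inside $A$ (which must use the random edges) together with a $K_{r-s}$ inside $V \setminus A$ (readily available in $G$).

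To cover the bulk of $A$ we find an almost $K_s$-factor in $G^p[A] = G[A] \cup \Gnp[A]$. Because $p_s(n)$ lies well above the $K_s$-cover threshold $n^{-2/s}$ in $\Gnp[A]$, a standard second-moment argument yields $(1 - o(1))|A|$ vertex-disjoint $K_s$-copies inside $A$ w.h.p., with the sparse deterministic edges of $G[A]$ used only to patch minor local imbalances. Pairing each such $K_s$ with a $K_{r-s}$ inside $V \setminus A$ via a Hall-type matching (trivial because $G$ is nearly complete between $A$ and $V \setminus A$ and inside $V \setminus A$) produces an almost $K_r$-factor, leaving a leftover $W$ of size $o(n)$.

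The main obstacle is building an absorber that functions at precisely $p = C p_s(n)$, without extra logarithmic factors. The critical configuration turns out to be $K_{s+1} - e$, the clique on $s + 1$ vertices with one missing edge: it has $s+1$ vertices and $(s-1)(s+2)/2$ edges, so the ratio $s/((s-1)(s+2)/2) = \varphi(s)$ governs the threshold at which $\Theta(n)$ copies of $K_{s+1} - e$ appear in $\Gnp[A]$. Each such copy is an \emph{absorbing gadget}: both endpoints of the missing edge complete the same $K_{s-1}$ to a $K_s$, so we may freely swap which one enters a $K_s$-copy inside $A$ and which one is reserved for later absorption. We then assemble $\Omega(n)$ vertex-disjoint $K_{s+1} - e$ gadgets inside $A$, supplemented by analogous switching structures spanning $A$ and $V \setminus A$ to resolve compatibility constraints, and show via the second moment method and Janson's inequality that the resulting absorber can accommodate any leftover $W$ produced in the previous step. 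Arranging the gadgets to be sufficiently numerous and sufficiently spread out, at the very threshold $p = C p_s(n)$ where their expected count just becomes $\Theta(n)$, is the technically most delicate part of the argument.
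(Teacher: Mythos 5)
Your opening dichotomy is not valid, and it discards precisely the hardest case. You split according to whether $\delta(G)\geq\left(1-\frac sr+\gamma\right)n$; but when this fails, i.e.\ when the minimum degree is only $\left(1-\frac sr\right)n$, no Hajnal--Szemer\'edi-type stability statement produces your ``stable'' structure. Minimum degree exactly at the threshold does not force a sparse set of size about $\frac{sn}{r}$: a random-like graph of density slightly above $1-\frac sr$ has minimum degree right at $\left(1-\frac sr\right)n$, yet every set of $\frac{sn}{r}$ vertices spans $\Omega(n^2)$ edges, so it is nowhere near the extremal configuration, and Theorem~\ref{thm:hmt} does not apply to it either. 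The correct dichotomy is structural (Lemma~\ref{l:4.3chr-simplified}): either $G$ contains an almost independent set of size roughly $\frac{sn}{r}$, or every such set spans $\gamma^2n^2$ edges. The second, non-extremal case is the bulk of the paper (Lemma~\ref{l:non-extremal}): it needs the regularity method, a stability version of the Shokoufandeh--Zhao tiling theorem to cover the reduced graph, a covering lemma for the exceptional vertices, a delicate balancing step, and the fractional Kahn--Kalai/spread-measure machinery with the Heckel--Riordan coupling to finish inside super-regular pairs. Your proposal contains no argument at all for these graphs.

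Even in the extremal case the absorbing scheme does not function at $p=Cp_s$. First, vertex-targeted absorption is unavailable at this density: the expected number of copies of $K_{s+1}$ minus an edge through a \emph{fixed} vertex is $\Theta\bigl(n^sp^{\binom{s+1}{2}-1}\bigr)=\Theta(1)$ (this is exactly the identity $n^sp_s^{\binom{s+1}{2}-1}=1$), so you cannot give each potential leftover vertex even $\omega(1)$ gadgets, and the ``sufficiently numerous and spread out'' rooted family you invoke does not exist. Secondly, your gadgets lie entirely in $\Gnp[A]$, but when $|A|=\frac{sn}{r}+g$ with $g>0$ (and $g$ may be as large as $\gamma n$), any $K_r$-factor forces $\Omega(g)$ genuine copies of $K_{s+1}$ in $G^p[A]$; a random $K_{s+1}$ minus an edge supplies one only if its missing pair happens to be an edge of $G[A]$, and the purely random count of $K_{s+1}$-copies is only $\Theta(np)=\Theta\bigl(n^{1-\varphi(s)}\bigr)\ll g$. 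The actual content here is to exploit $\delta(G[A])\geq g$ and combine one deterministic edge with $\binom{s+1}{2}-1$ random edges to extract $g$ vertex-disjoint $K_{s+1}$'s (Lemma~\ref{l:5.5chr}); the paper then finishes not by absorption but by balancing the remaining pair exactly and applying the spread-measure argument for randomly perturbed super-regular pairs (Lemma~\ref{c:st-reg-pairs-factors}). Your Hall-type pairing of random $K_s$'s in $A$ with $K_{r-s}$'s outside leaves an $o(n)$ leftover with divisibility constraints that the gadgets, as described, cannot repair.
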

This answers questions raised in~\cite{bpss23,hmt21}. 

As for the threshold lower bound, the property that ends up being crucial and heavily used throughout the argument is that for $\alpha = o(1)$ and $p = Cp_s$, the graph $G_\alpha \cup \Gnp$ typically contains $\alpha n$ disjoint copies of the complete graph $K_{s+1}$ (see Lemma~\ref{l:5.5chr}).
One very interesting feature of this problem is that to show the lower bound for the lemma above, the construction for the \textit{host graph} $G_\alpha$ uses partly a pseudorandom graph. This is completely different to all the extremal constructions we have encountered when dealing with randomly perturbed graphs, in which $G_{\alpha}$ is usually some highly structured, e.g.~complete multipartite, graph.
In fact, we prove a lower bound for any choice of parameters $s$ and $r$, and conjecture that this is the correct threshold.

\begin{thm}\label{t:lower-bound}
    Let $1 < s <r$, and let $n$ be divisible by $r$. For any $\eps>0$, there is a constant $c$ and a graph $G$ with minimum degree  $\left(1 - \frac{s}{r} \right)n$ such that for $p = cp_s(n)$, the probability that the graph $G^p = G \cup \Gnp$ has a $K_r$-factor is less than $\eps$.
\end{thm}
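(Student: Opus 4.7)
The argument splits according to whether $s\geq 3$ or $s=2$.

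\emph{Case $s\geq 3$.} I use a \emph{pseudorandom} host graph. For a small constant $c>0$ to be chosen, set $d:=cp_s$ and $g:=\frac{cs}{2r}p_sn$, take $A\subseteq V$ with $|A|=\lceil sn/r\rceil+g$, and let $B:=V\setminus A$. Sample $H\sim G_{|A|,d}$ on $A$ and define $G$ to have edges $E(H)$ together with all pairs meeting $B$. Chernoff plus a union bound gives $\delta(H)\geq d|A|/2\geq g$ w.h.p., whence $\delta(G)\geq g+|B|=(1-s/r)n$. Now for any $K_r$-factor $\cF$ of $G\cup \Gnp$, setting $m_k:=|\{K\in\cF:|K\cap A|=k\}|$, the identities $\sum_k m_k=n/r$ and $\sum_k km_k=|A|$ force $\sum_k(k-s)m_k=g$, and since $k-s\leq r-s$ we conclude $\sum_{k\geq s+1}m_k\geq g/(r-s)$. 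Each such $K_r$ contains a $K_{s+1}$ in $H\cup \Gnp[A]$, and these copies are pairwise vertex-disjoint.

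The random graph $H\cup \Gnp[A]$ has the distribution $G_{|A|,d+p-dp}$, so
\begin{equation*}
    \er{\#K_{s+1}(H\cup \Gnp[A])}\leq \binom{|A|}{s+1}(d+p)^{\binom{s+1}{2}}=O_{r,s}\bigl(c^{\binom{s+1}{2}}n^{s+1-\varphi(s)\binom{s+1}{2}}\bigr).
\end{equation*}
The definition of $\varphi(s)$ is equivalent to $s+1-\varphi(s)\binom{s+1}{2}=1-\varphi(s)$, so the expected count is $O_{r,s}\bigl(c^{\binom{s+1}{2}}\bigr)n^{1-\varphi(s)}$, of the same order as $g=\Theta(n^{1-\varphi(s)})$ up to a factor of $c^{\binom{s+1}{2}-1}\leq c^5$ (since $s\geq 3$). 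Choosing $c$ sufficiently small therefore makes the ratio $\er{\#K_{s+1}}/(g/(r-s))$ smaller than $\eps^2$. A standard Markov-plus-averaging argument then yields a deterministic $H^*$ with $\delta(H^*)\geq g$ and $\pr{\#K_{s+1}(H^*\cup \Gnp[A])\geq g/(r-s)}<\eps$; the graph $G$ built from this $H^*$ completes the case.

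\emph{Case $s=2$.} Now $g$ would only be a constant, so I use the classical extremal graph $F_{r,2,0}$: an independent set $A$ of size $2n/r$ together with all other edges. For a putative factor $\cF$ write $m_a:=|\{K\in\cF:|K\cap A|=a\}|$; the identity $\sum_a(a-2)m_a=0$ yields $m_1\leq\sum_{a\geq 3}(a-2)m_a\leq(r-2)\cdot\#K_3(\Gnp[A])$. Every isolated vertex of $\Gnp[A]$ must be the unique $A$-vertex of some $K\in\cF$ with $|K\cap A|=1$, so $\#\text{isolated}(\Gnp[A])\leq(r-2)\cdot\#K_3(\Gnp[A])$. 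For $p=c(\log n)/n$ with $c<r/2$, the expected number of isolated vertices in $\Gnp[A]$ is $\Theta(n^{1-2c/r})$, concentrated by the second moment, whereas $\er{\#K_3(\Gnp[A])}=O((\log n)^3)$; the polynomial strictly dominates the polylog, so w.h.p.\ no such factor exists.

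\textbf{Main obstacle.} The heart of the proof is Case $s\geq 3$: the exponent $\varphi(s)$ is precisely the unique scale at which the two constraints on $H$ --- minimum degree $\geq g$ inside $A$, and $o(g)$ copies of $K_{s+1}$ in $H\cup \Gnp[A]$ --- become simultaneously tight. This delicate balance, which no purely multipartite construction can achieve, is what forces the polynomial jump of the threshold at $\alpha=1-s/r$.
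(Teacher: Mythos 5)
Your argument is correct, and in the main case $s\geq 3$ it executes the shared underlying idea (a sparse (pseudo)random host of density $\Theta(p_s)$ inside a set $A$ of size $sn/r+\Theta(p_s n)$, so that the forced $\Theta(p_s n)$ disjoint $K_{s+1}$'s in $A$ outnumber the available ones by a factor $c^{-(\binom{s+1}{2}-1)}$) via a genuinely different derandomisation. The paper first fixes a deterministic host: it takes a \emph{bipartite} random graph and extracts one satisfying explicit embedding-count bounds for every graph $F$ on $s+1$ vertices (its Lemma~\ref{lem:emb}, properties (G1)--(G4)); bipartiteness guarantees the host is $K_{s+1}$-free, so every $K_{s+1}$ in $G^p[A]$ uses a random edge, and the expected number of such copies is bounded by summing $|\emb(F,G[A])|\,p^{\binom{s+1}{2}-e(F)}$ over proper subgraphs $F$, followed by Markov over $\Gnp$ only. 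You instead keep the host $H\sim G_{|A|,d}$ random throughout, exploit that $H\cup\Gnp[A]$ is again binomial so the \emph{total} expected $K_{s+1}$-count is a one-line computation $\binom{|A|}{s+1}(d+p)^{\binom{s+1}{2}}=O(c^{\binom{s+1}{2}}n^{1-\varphi(s)})$, and only at the end extract a deterministic $H^*$ by Markov applied to the function $H\mapsto\pr{\#K_{s+1}(H\cup\Gnp[A])\geq g/(r-s)}$, intersected with the Chernoff event $\delta(H)\geq g$. This buys you a shorter proof with no per-subgraph embedding lemma and no need for $K_{s+1}$-freeness of the host; what it gives up is an explicit description of the extremal graph (the paper's $H$ is pinned down by concrete pseudorandomness properties, which is also what the authors emphasise conceptually), and your constant $c$ ends up depending on $\eps$ through the averaging step -- which the theorem permits. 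Your divisibility/counting identity $\sum_k(k-s)m_k=g$ and the disjointness of the resulting $K_{s+1}$'s are exactly the paper's first step. For $s=2$ your argument is essentially the paper's (isolated vertices of $\Gnp[A]$ versus the $O(\log^3 n)$ triangles there), with a single independent set in place of their complete multipartite graph and with the more careful second-moment justification for the lower bound on isolated vertices; just make sure to state the Markov slack for the triangle count (e.g.\ $\leq\log^4 n$ w.h.p.) and the rounding of $g$ to an integer, both routine.
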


Let us finish the introduction by pointing out the synergy between extremal combinatorics highlighted by our proof  [before: that the proofs in this area are interesting as they use a combination of extremal combinatorics and random graph theory]. In particular, in the deterministic part of our graph, we need to find a partial $K_{s,r-s}$-factor with $o(n) $ leftover vertices, even when the minimum degree is $(1-s/r-\delta) n$. This is implied by results from Section~\ref{sec:6-chr}, in which we also prove a stability version of some Dirac-type results due to Shokoufandeh and Zhao~\cite{sz03}. These results and methods may be of independent interest.

Another technique worth pointing out is an application of the (now proven) fractional Kahn--Kalai conjecture which turns out to be naturally compatible with the above-mentioned results from extremal graph theory and the regularity method.

Finally, we believe that the conclusion of Theorem~\ref{t:main} also holds for all $s  \in \{2, 3, \ldots, \lfloor (r-1)/2 \rfloor \}$, and we are hoping to prove the full statement soon. 

The paper is structured as follows. Section~\ref{sec:lower-bound} contains the proof of Theorem~\ref{t:lower-bound} (the construction of the extremal graph and the lower bound for the threshold probability). Section~\ref{sec:main-proof} contains the statements of the main lemmas, the deduction of Theorem~\ref{t:main}, and the distinction of two main cases (\textit{extremal} vs.\,\textit{non-extremal}) depending on the structure of the deterministic graph $G_\alpha$. The main steps of the proof are also outlined in Section~\ref{sec:main-proof}. The extremal case and the bottleneck-problem of finding \textit{many} $K_{s+1}$-copies are treated in Section~\ref{sec:larger-clques}. Sections~\ref{sec:spread-applications}--\ref{sec:non-extremal} contain the proof in the non-extremal case, which includes the above-mentioned tools. 
Since the non-extremal case is rather involved, we have also included a short proof outline in Section~\ref{sec:non-extremal}.


\section{Preliminaries}

Let $t$ be such that $r = s + t$. Throughout the paper, we refer to the case $s = t = r/2$ as the \textit{singular} case, and in this case some steps are treated separately. 

For a graph $G=(V,E)$ we denote by $V(G)$ and $E(G)$ the set of vertices and the set of edges of $G$, and by $v(G)$ and $e(G)$ the sizes of the respective sets. For a subset $S\subset V$, by $e_G(S)$ we denote the number of edges spanned in $G$ by vertices of $S$, and by $N_G(S)$ we denote the set of common neighbours in graph $G$ of vertices in $S$. Next, $G(V,p)$ denotes the usual binomial random graph defined specifically on the vertex set $V$, and 
$G_{n,m,p}$ is the random bipartite graph with partition classes of size $n$ and $m$, respectively. Moreover, $G[V_1,V_2]$ is the bipartite subgraph of $G$ induced by partition classes $V_1$ and $V_2$.

For a family of graphs $\mathcal{P}$, a \emph{$\mathcal{P}$-factor} in a graph $G$ is a collection $\mathcal{F}$ of pairwise vertex-disjoint subgraphs of $G$, each isomorphic to an element of $\mathcal{P}$, which together span $V(G)$. Sometimes we refer to a collection of such pairwise vertex-disjoint subgraphs of $G$ as a \textit{partial factor}.

For any set $X$, by ${X \choose \ell}$ we denote the family of all $\ell$-element subsets of $X$.

\subsection{Probabilistic bounds}

\begin{lemma}\label{l:ks-in-lin-sized}
    For $s \geq 3$ and $\eps >0$, let  $p = n^{-2/s+\eps}$. Then, w.h.p.\,any subset $S \subset V(\Gnp)$ with $|S| \geq \frac{n}{\log^2 n}$ contains a copy of $K_s$.
\end{lemma}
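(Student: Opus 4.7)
The plan is to set $m := \lceil n/\log^2 n \rceil$ and take a union bound over all subsets $S \subseteq V(\Gnp)$ with $|S| = m$, since any superset of such a set still contains every $K_s$ it has. For a fixed $S$, I would estimate $\Pr[\Gnp[S] \text{ has no } K_s]$ via Janson's inequality applied to $X_S$, the number of $s$-cliques in $\Gnp[S]$.

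First I would compute
\[
\mu := \er{X_S} = \binom{m}{s} p^{\binom{s}{2}} = \Theta\!\left(\frac{n^{1 + \eps s(s-1)/2}}{\log^{2s} n}\right),
\]
using $p^{\binom{s}{2}} = n^{-(s-1) + \eps s(s-1)/2}$. For the pairwise correlation, I would stratify $\Delta = \sum_{j=2}^{s-1} \Delta_j$, where $\Delta_j$ sums $\Pr[A, B \subseteq \Gnp]$ over ordered pairs of distinct $s$-cliques in $S$ sharing exactly $j$ vertices. A direct count gives $\Delta_j = \Theta\bigl( m^{2s-j}\, p^{s(s-1)-\binom{j}{2}} \bigr)$, so
\[
\frac{\mu^2}{\Delta_j} = \Theta\!\left(m^j\, p^{\binom{j}{2}}\right) = \Theta\!\left(\frac{n^{g(j) + \eps \binom{j}{2}}}{\log^{2j} n}\right), \qquad g(j) := j - \frac{j(j-1)}{s}.
\]

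The decisive observation is that on the range $j \in \{2, \ldots, s-1\}$, the concave function $g$ attains its minimum at the endpoints, with value $g(2) = g(s-1) = 2(s-1)/s$, and for $s \geq 3$ this exceeds $1$. Hence $\mu^2/\Delta \geq n^{1 + \Omega(1)}$, and Janson's inequality yields $\Pr[X_S = 0] \leq \exp(-n^{1+\Omega(1)})$. Since $\binom{n}{m} \leq (e\log^2 n)^m = \exp\bigl(O(n \log\log n / \log^2 n)\bigr) = \exp(o(n))$, a union bound over all candidate sets $S$ completes the argument.

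The main obstacle is purely computational: verifying $g(j) > 1$ on $\{2, \ldots, s-1\}$. This uses $s \geq 3$ essentially, since $g(2) = 1$ when $s = 2$; this borderline case is consistent with the exclusion of $s = 2$ here and with the logarithmic correction appearing in $p_s$ when $s = 2$.
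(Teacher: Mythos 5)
Your proposal is correct and follows essentially the same route as the paper: fix sets of size exactly $n/\log^2 n$, apply Janson's inequality to the count of $s$-cliques, and finish with a union bound using $\binom{n}{m}\leq (e\log^2 n)^m = e^{o(n)}$. The only difference is that you spell out the overlap terms $\Delta_j$ and the verification $\mu^2/\Delta_j \geq n^{1+\Omega(1)}$ (via the concave exponent $g(j)$), which the paper leaves implicit when it asserts the bound $e^{-\Omega(n)}$; this is a welcome elaboration rather than a deviation.
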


\begin{proof}
    Let $n_* = n / \log^2 n$. It is enough to consider subsets $S \subset V(\Gnp)$ with $|S|=n_*$. For a fixed vertex set $S$ note that the subgraph of $\Gnp$ induced by $S$ is distributed as a~random graph with edge probability $p$. Thus, if $\mu_{n_*}$ is the expected number of copies of $K_s$ in $\Gnp[S]$, we have
    $$\frac{\mu_{n_*}}{n} \geq \frac{n_*^s}{s^s } p^{\binom s2} =        \frac{n_*}{s^s } (n_* p^{\frac{s}2})^{s-1}
    \geq \frac{n}{\log ^2 n} \cdot n^{\eps s(s-1)/4} >n$$
    for sufficiently large $n$.
    Hence, by Janson's inequality (e.g.,~\cite[Theorem 2.14]{jlr00}, originally proved in~\cite{janson90}),
    $$\pr{\Gnp[S] \not \supset K_s} = e^{- \Omega(n)}. $$
    Taking the union bound over all possible choices of $S\subset V(\Gnp)$ with $|S|=n_*$, we have 
    $$\sum_S \pr{K_s \not \subset \Gnp[S]} \leq \left( e \log^2 n \right)^{n_*} e^{- \Omega(n)} = o(1), $$
    which completes the proof.
\end{proof}

\subsection{Using the Fractional Kahn--Kalai conjecture}
    \label{sec:prelim-kk}
    We will apply the powerful result of Frankston, Kahn, Narayanan and Park~\cite{fknp21}, which confirmed the fractional Kahn--Kalai conjecture. The full conjecture has been confirmed in 2022 by Park and Pham~\cite{pp22}. For an introduction to this ground-breaking tool, we recommend the recent survey of Perkins~\cite{perkins2024searching}.
 See also~\cite{pp22} for the proof of the full conjecture. 
 
    Consider a finite ground set $Z$ and fix a nonempty collection of subsets $\cH \subset 2^Z$. For example, $Z$ can consist of the edges of the complete graph $K_n$ and $\cH$ can be the collection of perfect matchings in $K_n$. The fractional Kahn--Kalai conjecture  reduces the probabilistic problem of finding an element of $\cH$ in a random subset of $Z$ to an `extremal' problem of finding a so-called spread measure on $\cH$.

For $q >0$, a probability measure $\mu$ on $\cH$ is called a \emph{$q$-spread} if for every set $S \subset Z$,
\begin{equation}
	\label{eq:def-spread}
	\mu(\{A \in \cH: A \supseteq S\}) \leq q^{|S|}.
\end{equation}
We write $Z_p$ for a random subset of $Z$ in which each element is included independently with probability $p \in [0,1]$. It is not difficult to show (see e.g.~Frankston et al.~\cite{fknp21}) that the existence of a $q$-spread measure is necessary for $Z_q$ to contain an element of $\Hc$ with probability, say, $\frac 12$. The following theorem shows that up to a logarithmic factor, it is also sufficient for finding w.h.p.\,an element of $\Hc$. We state the theorem in a packaged form from~\cite{psss22}.

\begin{thm}[\cite{fknp21}, \cite{psss22}]\label{thm:fractional-kk} 
There exists a constant $C>0$ such that the following holds. Consider a non-empty ground set $Z$ and fix a non-empty collection of subsets $\cH \subset 2^ Z$. Suppose that there exists a $q$-spread probability measure on $\cH$, and let $p=\min(Cq \log |Z|, 1)$. Then the probability that $Z_p$ does not contain an element of $\cH$ tends to $0$ as $|Z| \rightarrow \infty$.
\end{thm}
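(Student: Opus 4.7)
The plan is to adapt the iterative coupling argument of Frankston, Kahn, Narayanan and Park. I would express $Z_p$ as a union $W_1 \cup \cdots \cup W_L$ of $L = \lceil \log_2 |Z| \rceil$ independent random subsets of $Z$, each of density $p' = \Theta(q)$, using the standard coupling $1-(1-p)^L \leq Lp'$; in this way the factor $L$ absorbs the $\log|Z|$ term in the target bound $p = Cq\log|Z|$. It then suffices to exhibit an element $A \in \cH$ contained in $W_1 \cup \cdots \cup W_L$ with probability tending to $1$.

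The central technical tool is a \emph{shrinkage lemma}: assuming $\cH$ is $q$-spread with $p' \geq C_0 q$ for a sufficiently large $C_0$, and given any nonempty $S \subseteq Z$ with $|S| \geq 2$ together with a random $W \subseteq Z$ of density $p'$, with probability at least $1 - 1/(2L)$ there exist $A \in \cH$ and $S' \subsetneq S$ with $|S'| \leq |S|/2$ such that $A \subseteq S' \cup W$. To prove it, I would sample $A$ from the $q$-spread measure $\mu$ weighted by its compatibility with the history and then bound the probability that $A$ leaves a large residue outside $W$. The spread bound~\eqref{eq:def-spread} is used to control, for each candidate residue set $T \subseteq S$ with $|T| > |S|/2$, the $\mu$-mass of $\{A \in \cH : A \supseteq T\}$, yielding a Markov-type estimate that rules out such a residue with the required probability.

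With the shrinkage lemma in hand, I would iterate: start with $S_0 \in \cH$ arbitrary, and in step $i \geq 1$ use $W_i$ to produce $A_i \in \cH$ and $S_i \subsetneq S_{i-1}$ with $|S_i| \leq |S_{i-1}|/2$ and $A_i \subseteq S_i \cup W_1 \cup \cdots \cup W_i$. After $L \geq \log_2 |Z|$ rounds one has $|S_L| < 1$, so that $A_L \subseteq W_1 \cup \cdots \cup W_L \subseteq Z_p$. A union bound over the $L$ rounds, each failing with probability at most $1/(2L)$, completes the argument with overall failure probability $o(1)$.

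The hardest part will be the shrinkage lemma, because the $q$-spread hypothesis must be exploited without incurring too much loss in the conditioning step: after conditioning on $W_1, \dots, W_i$ and the chosen $A_1, \dots, A_i$, the conditional distribution of a compatible $A_{i+1}$ may no longer be $q$-spread on the relevant piece. This is handled in the FKNP argument by a \emph{minimum-fragment encoding}, in which any failure of shrinkage produces an obstruction with a compact combinatorial description (a small ``fragment'' of $A$ together with a short side-information string), and counting the encoding contradicts the spread bound~\eqref{eq:def-spread}. Tuning the constant $C_0$, and hence the constant $C$ in the theorem, so that the product of the $L$ per-step failure probabilities still yields success with probability $1-o(1)$ is the remaining accounting step.
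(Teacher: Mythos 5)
First, a point of comparison: the paper does not prove Theorem~\ref{thm:fractional-kk} at all. It is imported as a black box from Frankston--Kahn--Narayanan--Park~\cite{fknp21}, in the packaged form of~\cite{psss22}, and is then only applied. So your proposal can only be measured against the original FKNP argument, of which it is an outline: the decomposition of $Z_p$ into $L=\Theta(\log|Z|)$ independent exposures of density $\Theta(q)$, the iterated shrinkage of the uncovered fragment, and the minimum-fragment counting are exactly their proof. Since you explicitly defer the entire content of the key step to ``the FKNP minimum-fragment encoding'', what you have written is a citation-level summary of the known proof rather than a proof.

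Beyond that, the shrinkage lemma as you state it is false, so the iteration as written would not run. You quantify over \emph{every} $S\subseteq Z$ and ask that, with probability $1-1/(2L)$ over a single fresh $W$ of density $\Theta(q)$, there exist $A\in\cH$ and $S'\subseteq S$ with $|S'|\le|S|/2$ and $A\subseteq S'\cup W$. Take $\cH$ to be the perfect matchings of $K_n$ (so $q=O(1/n)$, $p'=O(1/n)$) and let $S$ be a partial matching with $n/10$ edges: any $A\subseteq S'\cup W$ must match every vertex outside $V(S)$ using $W$-edges only, but at density $O(1/n)$ a constant fraction of those $4n/5$ vertices are isolated in $W$, so the statement fails with probability $1-o(1)$ --- and this $S$ is even contained in an element of $\cH$. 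Note also the mismatch with your own iteration: there you (correctly) allow $A_i\subseteq S_i\cup W_1\cup\dots\cup W_i$, whereas the lemma you would invoke yields only $A_i\subseteq S_i\cup W_i$. The correct FKNP statement is not worst-case in $S$: it concerns the \emph{joint} distribution of an edge $A$ sampled from the $q$-spread measure and the fresh $W$, produces $A'\in\cH$ with $A'\subseteq A\cup W$ whose uncovered part has at most half the size, and its failure probability is geometric in the current fragment size; these errors are then summed over rounds (your uniform $1/(2L)$ per round is not how the accounting works, and the final rounds, where the fragment has bounded size, need the slack between $\log|Z|$ and the maximum edge size, or a large constant, to reach failure probability $o(1)$). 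The fragment-counting argument proves this joint statement without conditioning, which is precisely how the ``conditional measure is no longer $q$-spread'' obstacle you mention is circumvented; as it stands, your proposal names that device but neither states the lemma it proves correctly nor carries out the count.
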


Returning to the above-mentioned example, if $\cH$ is the collection of perfect matchings in $K_n$, then the uniform measure on $\cH$ is an $O(n^{-1})$-spread measure on $\cH$. So Theorem~\ref{thm:fractional-kk} implies the standard (but non-trivial) result  that the random graph $\Gnp$ with $p \geq C n^{-1} \log n$ w.h.p.\,contains a perfect matching for some $C>0$. A more interesting example of a spread measure is Lemma~\ref{l:two-spread}.

Our main application of Theorem~\ref{thm:fractional-kk} will be to find $K_r$-factors in randomly perturbed regular pairs (see Lemma~\ref{c:st-reg-pairs-factors}), and to this end we utilise the ideas from~\cite{psss22}. We also give a simpler application, Lemma~\ref{l:x-cover}, which is
needed for covering the ‘leftover vertices’ after regularising the graph.

In case $\cH$ is a collection of $K_s$-factors for $s >2$, the spread condition~\eqref{eq:def-spread} is slightly more difficult to verify, so it is convenient to pass to the setting of $s$-uniform hypergraphs, which is also done in~\cite{psss22}. In our applications, the ground set $Z$ will be an $s$-uniform hypergraph ($s$-graph), so given a graph $F$, let $F^{(s)}$ denote the collection of $K_s$-copies in $F$. For an $s$-graph $H$, let $\partial H$ denote the set of pairs contained in elements of $H$ (i.e.,~the 1-skeleton of $H$). We will be constructing spread measures on $F^{(s)}$ (for suitable $F$), which significantly simplifies the spread calculation~\eqref{eq:def-spread}, and then apply beautiful results due to Heckel~\cite{heckel21} and Riordan~\cite{riordan22} which immediately imply the desired threshold in random graphs.\footnote{Passing to hypergraphs also saves a $polylog$-factor in the threshold probability, but this is not needed for our application.}
They constructed a coupling between the random graph $G_{n,p}$ and a~random $s$-uniform hypergraph $H^{(s)}(n,\pi)$ with edge probability $\pi \sim p^{\binom s2}$, in a way that the edges of $H^{(s)}(n,\pi)$ form cliques in $G_{n,p}$. We state these results in the following lemma, the proof of which can be found in~\cite{psss22}, where the ground sets $K_n$ and $K_n^{(s)}$ are replaced by $G$ and $G^{(s)}$, respectively.

In the following lemma, $G_p$ denotes a random subgraph of $G$ in which each edge is present independently with probability $p$, and $G^{(s)}_\pi$ denotes a similar random subhypergraph of $G^{(s)}$ with edge probability $\pi$. 

\begin{lemma}[\cite{heckel21,riordan22}]\label{l:coupling}
Let $s \geq 3$, and let $\eps=\eps(s)$ be sufficiently small. For any $p \leq n^{-2/s+\eps}$ and for some $\pi \sim p^{\binom s2}$, there is a joint distribution $\lambda$ of a graph $F$ and an $s$-uniform hypergraph $H$ such that w.h.p.\,$H$ is contained in $F^{(s)}$ (that is, every hyperedge of $H$ corresponds to an $s$-clique in $F$). Moreover, the marginal distribution of $F$ is the same as that of $G_p$, and the marginal distribution of $H$ is the same as that of $G^{(s)}_{\pi}$. 
\end{lemma}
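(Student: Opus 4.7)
The plan is to follow the Heckel--Riordan coupling strategy. Fix $\pi$ by the relation $1 - \pi = (1-p)^{\binom{s}{2}}$, so that $\pi = p^{\binom{s}{2}}(1 + o(1)) \sim p^{\binom{s}{2}}$, and each $s$-clique $S$ of the ground graph $G$ satisfies $\Pr[S \subseteq G_p] = \pi$. The naive attempt is to take $F \sim G_p$ and set $H := F^{(s)}$; this trivially yields $H \subseteq F^{(s)}$ and the correct hyperedge marginal $\pi$. The obstacle is joint independence: two $s$-cliques of $G$ sharing at least one common edge are positively correlated in $F^{(s)}$, so $H$ is not distributed as the product $G^{(s)}_\pi$.

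The key step is to prove that this dependence is negligible below the clique threshold, in the sense that the total variation distance $d_{TV}(F^{(s)},\, G^{(s)}_\pi)$ is $o(1)$ for $p \leq n^{-2/s + \eps}$ with $\eps = \eps(s)$ sufficiently small. I would derive this from a Chen--Stein Poisson approximation argument. For a pair of distinct $s$-cliques of $G$ sharing exactly $k$ vertices ($k \in \{2, \dots, s-1\}$), one has $\Pr[\text{both cliques lie in } F] = p^{2\binom{s}{2} - \binom{k}{2}}$. Summing over all such pairs and normalising by the expected number of cliques $|G^{(s)}|\pi = \Theta(n^s p^{\binom{s}{2}})$, the dominant Chen--Stein error reduces to
\[
\sum_{k=2}^{s-1} n^{s-k}\, p^{\binom{s}{2} - \binom{k}{2}} \;=\; \sum_{k=2}^{s-1} O\!\bigl(n^{-(s-k)(k-1)/s + O(\eps)}\bigr),
\]
each summand being $o(1)$ since $(s-k)(k-1) \geq s-2 \geq 1$ for $s \geq 3$ and $k$ in the allowed range.

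Given this total variation bound, the joint distribution $\lambda$ of the lemma is produced by a standard maximal coupling of $F^{(s)}$ with an independent copy $H' \sim G^{(s)}_\pi$: sample $F \sim G_p$, extract $F^{(s)}$, and then set $H$ to equal $F^{(s)}$ whenever the coupling of these two distributions succeeds, and to equal $H'$ otherwise; this yields $H \sim G^{(s)}_\pi$ with the correct marginal, and $H = F^{(s)}$ with probability $1 - o(1)$, so in particular $H \subseteq F^{(s)}$ w.h.p. The main obstacle, and the technical heart of Heckel's and Riordan's proofs, is carrying out the Chen--Stein estimate uniformly over all overlap sizes $k$; the most restrictive balance between the polynomial growth in $n$ and the power of $p$ occurs at $k = 2$ and $k = s - 1$, and this is what dictates the admissible value of $\eps(s)$.
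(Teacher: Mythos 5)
There is a genuine gap: your central claim, that $d_{TV}\bigl(F^{(s)},G^{(s)}_\pi\bigr)=o(1)$ in the regime $p\leq n^{-2/s+\eps}$, is false, and the maximal-coupling construction built on it cannot work. The two hypergraph distributions are distinguished with probability $1-o(1)$ by the number of pairs of hyperedges sharing two vertices. In the clique hypergraph $F^{(s)}$ of $G_p$ this count has expectation of order $n^{2s-2}p^{2\binom s2-1}=n^{2/s+O(s^2\eps)}\to\infty$ (and is concentrated by a routine second-moment argument), because two $s$-cliques sharing an edge cost only $p^{2\binom s2 -1}$; in the binomial hypergraph $G^{(s)}_\pi$ with $\pi\sim p^{\binom s2}$ the same count has expectation of order $n^{2s-2}\pi^2=n^{s(s-1)\eps}$, which is polynomially smaller once $\eps\ll s^{-3}$. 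Hence the total variation distance tends to $1$, not $0$, and no coupling can make $H$ \emph{equal} to $F^{(s)}$ with high probability; only the one-sided containment $H\subseteq F^{(s)}$ is available, and obtaining it is exactly the nontrivial content of Heckel's and Riordan's theorems (their proofs build the coupling directly, with $\pi$ chosen slightly below $p^{\binom s2}$, rather than via distributional closeness). Your Chen--Stein estimate hides this: the Arratia--Goldstein--Gordon-type bound involves the \emph{unnormalised} sum $\sum_{k=2}^{s-1}n^{2s-k}p^{2\binom s2-\binom k2}$, whose $k=2$ term is $n^{2/s+O(s^2\eps)}\to\infty$; dividing by the expected number of cliques $n^sp^{\binom s2}$, as you do, is not a legitimate step and is what makes the error spuriously look like $o(1)$.

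For comparison, the paper does not reprove this lemma at all: it quotes it from Heckel~\cite{heckel21} ($s=3$) and Riordan~\cite{riordan22} (general $s$), in the packaged form whose derivation (with ground sets $G$ and $G^{(s)}$ in place of $K_n$ and $K_n^{(s)}$) is given in~\cite{psss22}. So a correct write-up here would either cite those results, or reproduce a genuine containment-coupling argument in their spirit; a total-variation/maximal-coupling route is not salvageable in this regime.
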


\subsection{Regularity}\label{sec:prelim-reg}
Some of our proofs rely heavily on the regularity method. One phenomenon that our paper, as well as~\cite{psss22}, demonstrates, is that the concept of regular pairs works very well in conjunction with the machinery of spread measures introduced in Section~\ref{sec:prelim-kk}.

First we introduce some necessary terminology. Let $G$ be a graph and let $V_1, V_2 \subset V(G)$ be disjoint subsets of the vertices of $G$. For non-empty sets $X_1\subseteq V_1$, $X_2 \subseteq V_2$, we define the \emph{density of $G[X_1,X_2]$} as $d_G(X_1, X_2)\coloneq \tfrac{e_G(X_1,X_2)}{|X_1||X_2|}$.
Given $\eps, d > 0$, we say that a pair $(V_1,V_2)$ is \emph{$(\eps,d)$-regular} in $G$ if for $i = 1,2$ and for all sets $X_i \subseteq V_i$  with $|X_i|\geq \eps |V_i|$, we have $| d_G(X_1,X_2)-d| < \eps$.
We usually omit the graph $G$ if it is clear from the context, for instance in the statement of Lemma~\ref{l:slicing} below.
    
We will use the following standard lemmas, which can be found for instance in~\cite{abcd22,ks95}.

\begin{lemma}\label{l:slicing}
    Let $0 < \eps < \beta$, $d \leq 1$ and let $(V_1,V_2)$ be an $(\eps,d)$-regular pair. Then for any pair $(U_1, U_2)$ such that for $i=1,2$, $U_i\subset V_i$ with $|U_i| \geq \beta |V_i|$, the pair $(U_1,U_2)$ is $(\eps', d')$-regular with $\eps' = \max\{\eps/\beta, 2\eps \}$ and some $d'>0$ satisfying $|d' -d|\leq \eps$.
\end{lemma}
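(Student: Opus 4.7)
The plan is to verify the two conclusions directly from the $(\eps,d)$-regularity of $(V_1,V_2)$, with essentially no clever ingredients beyond the correct choice of $d'$ and careful tracking of which sets are large enough to trigger the regularity hypothesis. First I would set $d' := d_G(U_1,U_2)$. Since $|U_i| \geq \beta |V_i| > \eps |V_i|$ for $i=1,2$, the pair $(U_1,U_2)$ itself witnesses regularity of $(V_1,V_2)$, so $|d'-d| = |d_G(U_1,U_2) - d| < \eps$. This already gives the second conclusion $|d'-d|\le \eps$, and also implies $d'>0$ provided $d > \eps$ (otherwise one can take $d'$ to be anything within $\eps$ of $d$; the statement is essentially vacuous when $d$ is very close to $0$).

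Next, to check $(\eps',d')$-regularity of $(U_1,U_2)$, I would fix arbitrary $X_i \subseteq U_i$ with $|X_i|\geq \eps' |U_i|$ for $i=1,2$ and estimate $d_G(X_1,X_2)$. The key size calculation is
\[
|X_i| \;\geq\; \eps'|U_i| \;\geq\; \frac{\eps}{\beta}\cdot \beta |V_i| \;=\; \eps |V_i|,
\]
using $\eps'\geq \eps/\beta$ on the one hand and $|U_i|\geq \beta |V_i|$ on the other. Thus $(X_1,X_2)$ is a pair of sufficiently large subsets of $(V_1,V_2)$ to which the $(\eps,d)$-regularity applies, yielding $|d_G(X_1,X_2) - d| < \eps$.

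Finally, combining this with the earlier bound $|d' - d| < \eps$ via the triangle inequality gives $|d_G(X_1,X_2) - d'| < 2\eps \le \eps'$, which is precisely the $(\eps',d')$-regularity condition for $(U_1,U_2)$. The two contributions to $\eps' = \max\{\eps/\beta,\, 2\eps\}$ correspond transparently to the two steps: $\eps/\beta$ is what rescales the ``largeness'' threshold as we restrict from $V_i$ to $U_i$, while the factor $2\eps$ absorbs the twofold application of the regularity bound (once for $(U_1,U_2)$ to pin down $d'$, once for $(X_1,X_2)$). I do not anticipate any real obstacle — the argument is a short chain of inequalities, and the only point requiring any thought is the definition of $d'$ as the actual density on $(U_1,U_2)$ rather than $d$ itself, which is what enables the triangle-inequality step in the final line.
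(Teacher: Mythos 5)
Your proof is correct and is exactly the standard slicing argument (take $d'$ to be the actual density of $(U_1,U_2)$, note that any $X_i\subseteq U_i$ with $|X_i|\geq \eps'|U_i|$ satisfies $|X_i|\geq \eps|V_i|$, and finish with the triangle inequality); the paper does not prove this lemma itself but cites it as standard, and your argument coincides with the proof found in those references. The only fine print is the cosmetic requirement $d'>0$, which you address adequately: when $d>\eps$ it is automatic, and when $d\leq\eps$ one may simply keep $d'=d$.
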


To state the version of the Regularity Lemma which we will apply (see~\cite[Lemma 2.6]{abcd22}), we introduce the notion of regular partitions and the reduced graph.
We say that the partition $V_0\dcup V_1 \dcup \ldots \dcup V_\ell$ of $V(G)$ is an \emph{$\eps$-regular partition} if $|V_0| \leq \eps |V(G)|$, $|V_1| = \ldots = |V_{\ell}|$, and for all but at most $\eps \ell^2$ pairs $(i,j) \in [\ell] \times [\ell]$, $i\neq j$, the pair $(V_i, V_j)$ is $\eps$-regular. The \emph{$(\eps, d)$-reduced graph} $R$ with respect to the above partition is a graph on vertex set $[\ell]$ such that the edges $ij$ correspond precisely to the $(\eps, d')$-regular pairs $(V_i,V_j)$ with density $d' \geq d$.

\begin{lemma}\label{l:regularity}
    For all $0 < \eps \leq 1$ and $\ell_0 \in \N$, there exists $L \in \N$ such that for every $0<d<\gamma<1$, every graph $G$ on $n >M_0$ vertices with minimum degree $\delta(G)\geq \gamma n$ has an $\eps$-regular partition $V_0 \dcup V_1 \dcup \ldots \dcup V_\ell$ with $(\eps,d)$-reduced graph $R$ on $\ell$ vertices such that $\ell_0 \leq \ell \leq L$ and $\delta(R)\geq (\gamma - d - 2\eps)\ell$.
\end{lemma}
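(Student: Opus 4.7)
The plan is to apply the classical Szemer\'edi Regularity Lemma with a slightly smaller regularity parameter $\eps'$ and a slightly larger lower bound on the number of parts, and then post-process the resulting partition so that the reduced graph inherits a minimum-degree condition from $G$. I would first invoke the standard Regularity Lemma with parameter $\eps' = \eps^2/16$ (any sufficiently small quantity below $\eps$ works) and with lower bound $\ell_0' = \lceil 4\ell_0/\eps \rceil$ on the number of parts. This yields a partition $V_0' \dcup V_1' \dcup \cdots \dcup V_{\ell'}'$ with $|V_0'| \leq \eps' n$, equally-sized main parts of common size $m' = |V_i'|$, and at most $\eps' \ell'^2$ irregular pairs, where $\ell_0' \leq \ell' \leq L'$ for some $L'$ depending only on $\eps$ and $\ell_0$.

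The next step is to absorb into the exceptional set those clusters that participate in too many irregular pairs. Call a cluster $V_i'$ \emph{bad} if it forms an irregular pair with more than $(\eps/2)\ell'$ other clusters; since each irregular pair contributes to at most two bad clusters, the number of bad clusters is at most $4\eps'\ell'/\eps \leq (\eps/4)\ell'$. Merge the bad clusters into $V_0'$ and reindex the surviving clusters as $V_1, \ldots, V_\ell$ with $\ell \geq (1-\eps/4)\ell' \geq \ell_0$. The new exceptional set has size at most $\eps' n + (\eps/4)\ell' m' \leq \eps n$, so the partition $V_0 \dcup V_1 \dcup \cdots \dcup V_\ell$ is $\eps$-regular (each surviving cluster has at most $(\eps/2)\ell' \leq \eps\ell$ irregular neighbours), and $\ell \leq L := L'$, which depends only on $\eps$ and $\ell_0$ as required.

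For the minimum-degree bound on the $(\eps,d)$-reduced graph $R$, I would use a straightforward double-counting argument. Fix $i \in [\ell]$ and set $m = |V_i|$. By the minimum-degree assumption, $\sum_{v \in V_i} \deg_G(v) \geq \gamma n m$. On the other hand, splitting this sum by the location of the other endpoint gives
\[
\sum_{v \in V_i} \deg_G(v) \leq m|V_0| + 2e_G(V_i) + \sum_{j \neq i} e_G(V_i, V_j) \leq \eps n m + m^2 + \bigl(|N_R(i)| + d\ell + \eps\ell\bigr) m^2,
\]
where I use $e_G(V_i, V_j) < dm^2$ for regular pairs of density below $d$, $e_G(V_i, V_j) \leq m^2$ for irregular pairs and for $j \in N_R(i)$, and the bound $\eps\ell$ on the number of irregular neighbours of $i$ guaranteed by the previous step. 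Dividing by $m^2$ and using $n/m \geq \ell$ yields $|N_R(i)| \geq (\gamma - d - 2\eps)\ell$ up to a bounded additive constant which is absorbed by taking $\ell_0$ sufficiently large. The main obstacle is purely bookkeeping: one must calibrate $\eps'$, $\ell_0'$ and the lower threshold on $n$ so that the cleanup step does not inflate $|V_0|$ beyond $\eps n$ and so that the lower-order error terms in the degree count are absorbed into the $(-2\eps)$ slack; this is routine once the parameters are chosen in the indicated hierarchy $1 \gg \eps \gg \eps' \gg 1/\ell_0'$.
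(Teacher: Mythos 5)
The paper does not prove this lemma at all: it is quoted as a standard ``minimum-degree form'' of the Regularity Lemma with a citation to \cite{abcd22} (Lemma 2.6 there), so there is no internal proof to compare against. Your derivation is the standard one and is essentially correct: apply the Regularity Lemma with a finer parameter $\eps'$ and a larger lower bound $\ell_0'$ on the number of clusters, discard the clusters lying in more than $(\eps/2)\ell'$ irregular pairs (there are at most $4\eps'\ell'/\eps\leq(\eps/4)\ell'$ of them, so the exceptional set stays below $\eps n$ and $\ell\geq\ell_0$), and then double-count the degrees out of a fixed cluster. One remark on the bookkeeping you flagged as routine: as literally written, bounding the $V_0$-term by $\eps nm$ and then using only $n/m\geq\ell$ leaves you with $(\gamma-d-\eps)\ell-\eps\tfrac{n}{m}-1$, and since $n/m$ can be as large as $\ell/(1-\eps)$ this is slightly weaker than the target. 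The clean way is to group the exceptional term with the main term, i.e.\ use $(\gamma-\eps)\tfrac{n}{m}\geq(\gamma-\eps)\ell$ (the case $\gamma\leq\eps$ being vacuous), and to note that your cleanup in fact gives at most $(\eps/2)\ell'\leq\tfrac{2\eps}{3}\ell$ irregular neighbours rather than $\eps\ell$; the spare $\tfrac{\eps}{3}\ell\geq 1$ (guaranteed by your choice $\ell_0'=\lceil 4\ell_0/\eps\rceil$, not by enlarging the given $\ell_0$) then absorbs the additive $1$ coming from the edges inside $V_i$. With that grouping the stated bound $\delta(R)\geq(\gamma-d-2\eps)\ell$ follows exactly, so the proposal stands.
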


Next, to find spanning subgraphs, it is convenient to strengthen the notion of regular pairs to super-regular pairs.   
A pair of sets $(V_1, V_2)$ in a graph $G$ is \emph{$(\eps, d, \vartheta)$-super-regular} if it is $(\eps, d)$-regular, and for $i \neq j$, every vertex $v \in V_i$ has at least $\vartheta|V_j|$ neighbours in $V_j$. As usual, we refer to an $(\eps, d, d-\eps)$-super-regular pair as \emph{$(\eps, d)$-super-regular}.

The following lemma allows us to pass from $(\eps, d, \vartheta)$-super-regular to $(\eps', \vartheta, \vartheta-\eps')$-super-regular pairs by discarding some edges (see~\cite[Lemma 2.12]{abcd22}).

\begin{lemma}[\cite{abcd22}]\label{l:super-reg-min}
    Let $0<\eps<1$ and let $G$ be a bipartite graph with parts $V_1, V_2$ of size $n$, for sufficiently large $n$. Let $\vartheta \in [4 \eps, 1]$. If $(V_1, V_2)$ is $(\eps^2,d', \vartheta-\eps^2)$-super-regular, then there is a spanning subgraph $G' \subset G$ such that $(V_1,V_2)$ is~$(4\eps,\vartheta, \vartheta - 4\eps)$-super-regular in~$G'$.
\end{lemma}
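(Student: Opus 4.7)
The plan is to produce $G'$ via random subsampling of the edges of $G$, with a special treatment of atypical low-degree vertices to preserve the minimum-degree condition. First, if $d'\le\vartheta+4\eps-\eps^2$ (which in particular covers $d'<\vartheta$), then $G'=G$ already works: by the triangle inequality and $(\eps^2,d')$-regularity, $|d_G(X_1,X_2)-\vartheta|\le\eps^2+|d'-\vartheta|<4\eps$ for every pair of subsets with $|X_i|\ge 4\eps n$, and $(\vartheta-\eps^2)n\ge(\vartheta-4\eps)n$ is immediate. So assume $d'>\vartheta+4\eps-\eps^2$, set $p:=\vartheta/d'\in(0,1)$, and write $B\subseteq V_1\cup V_2$ for the set of atypical vertices with $d_G(v)<(1-3\eps/\vartheta)d'n$. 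A short averaging argument applied to the pair $(B\cap V_i,V_{3-i})$ forces $|B\cap V_i|<\eps^2 n$: otherwise this pair would have density at most $(1-3\eps/\vartheta)d'$, violating $(\eps^2,d')$-regularity since $d'\ge\vartheta\ge 4\eps$.

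Next, build $G'$ by keeping each edge of $G$ incident to $B$ with probability $1$, and each of the remaining edges independently with probability $p$. For $(4\eps,\vartheta)$-regularity, fix $X_1\subseteq V_1$ and $X_2\subseteq V_2$ with $|X_i|\ge 4\eps n$. The $B$-incident edges contribute at most $|X_1\cap B|\cdot|X_2|+|X_1|\cdot|X_2\cap B|\le \eps^2 n(|X_1|+|X_2|)\le(\eps/2)|X_1||X_2|$ to $e_{G'}(X_1,X_2)$. The remaining edges form a sum of independent Bernoulli random variables with parameter $p$, whose total mean equals $p\cdot e_G(X_1\setminus B,X_2\setminus B)=\vartheta|X_1||X_2|(1\pm O(\eps))$ by regularity applied to $(X_i\setminus B)$ (using $|X_i\setminus B|\ge(1-\eps/4)|X_i|$). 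A Chernoff bound gives concentration within an additional $\eps|X_1||X_2|$, and a union bound over the at most $4^n$ pairs $(X_1,X_2)$ completes the regularity check.

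For the minimum degree, every $v\in B$ retains all its edges, giving $d_{G'}(v)=d_G(v)\ge(\vartheta-\eps^2)n\ge(\vartheta-4\eps)n$. For $v\notin B$, $\E[d_{G'}(v)]\ge p(1-3\eps/\vartheta)d'n=(\vartheta-3\eps)n$, and Chernoff plus a union bound give $d_{G'}(v)\ge(\vartheta-4\eps)n$ with high probability. The main technical obstacle is precisely this min-degree check: naively sampling every edge with probability $p$ would drop the degrees of the lowest-degree vertices well below $(\vartheta-4\eps)n$ whenever $d'\gg\vartheta$, and the repair of keeping $B$-incident edges intact is only legitimate because $|B|$ is small enough (by regularity) not to disturb the density of any large subset by more than $O(\eps)$.
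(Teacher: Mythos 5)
Your argument is correct. Note that the paper itself does not prove this lemma at all -- it is quoted verbatim from \cite{abcd22} (Lemma 2.12 there) -- so there is no internal proof to compare against; your random-sparsification proof (keep each edge with probability $\vartheta/d'$, except that edges meeting the few atypically low-degree vertices are kept deterministically, then Chernoff plus union bounds over the $4^n$ subset pairs and the $2n$ vertices) is the standard and complete way to establish it, and the degenerate case $d'\le\vartheta+4\eps-\eps^2$ is handled correctly by taking $G'=G$. One small point to make explicit in that degenerate case: when $d'<\vartheta$ the bound $|d'-\vartheta|<4\eps-\eps^2$ is not automatic from the case hypothesis alone, but it does follow in one line because the minimum-degree condition forces $d_G(V_1,V_2)\ge\vartheta-\eps^2$ while regularity gives $d_G(V_1,V_2)<d'+\eps^2$, so $d'>\vartheta-2\eps^2$; with that observation inserted, the proof is complete.
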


Finally, let us state the Blow-up Lemma for the special case that we will apply.

\begin{lemma}[\cite{kss97}]\label{l:blow-up} 
    Given $r, \vartheta >0$, there exists $\eps >0$ such that the following holds. Let $V_1,\ldots, V_r$ be vertex sets of a graph $G$ with $|V_i| =n$, $i=1,2,\ldots,r$, where $n$ is sufficiently large. If each pair $(V_i, V_j)$ (where $1 \leq i < j \leq r$) is $(\eps, d, \vartheta)$-super-regular for some $d \geq \vartheta$, then $G$ contains a $K_r$-factor in which each copy of $K_r$ uses one vertex of each part $V_i$.
\end{lemma}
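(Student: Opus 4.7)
The plan is to combine a greedy extraction of $K_r$-copies with the absorbing method. Equivalently, the problem reduces to finding a perfect matching in the auxiliary $r$-partite $r$-uniform hypergraph $\Hc$ whose vertex set is $V_1 \dcup \cdots \dcup V_r$ and whose hyperedges are the transversal copies of $K_r$ in $G$ (one vertex from each part).

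First I would establish a standard counting lemma: in any $(\eps, d, \vartheta)$-super-regular $r$-partite graph and for any vertex $v \in V_i$, the number of transversal $K_r$-copies through $v$ is $\Omega(n^{r-1})$. This follows by iterated slicing (Lemma~\ref{l:slicing}) applied along a chain of common neighbourhoods; the minimum-degree bound $\vartheta$ from super-regularity guarantees that every neighbourhood encountered has size $\geq \vartheta^{r-1} n$, which is large enough for slicing to preserve regularity with only a mild loss in $\eps$.

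Next, fix $\alpha \ll \vartheta$ and select a random absorber $\cA$ with $|\cA \cap V_i| = \alpha n$. I would build in $G[\cA]$ a collection of pairwise vertex-disjoint \emph{swap gadgets}: each is a small subset admitting two distinct partial $K_r$-factors that differ at exactly one designated \emph{free} vertex per part. For any $v \in V_i \setminus \cA$, the counting lemma shows that $\Omega(n^{r-1})$ potential gadgets can incorporate $v$ in place of their free vertex; a Chernoff-plus-union-bound argument then produces a single absorber $\cA$ for which any leftover of size $O(\alpha^2 n)$ per part can be absorbed simultaneously via a system of vertex-disjoint swaps, located by a Hall-type bipartite matching.

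On $G[V \setminus \cA]$, which remains $(\eps', d, \vartheta - \alpha - \eps')$-super-regular by Lemma~\ref{l:slicing} for $\eps' = \eps/(1-\alpha)$, I would iteratively peel off transversal $K_r$-copies. At each step a partial selection $v_1 \in V_1, \ldots, v_{i-1} \in V_{i-1}$ has a common neighbourhood of size $\geq (\vartheta - o(1))^{i-1}n$ in every remaining part, into which regularity and the minimum-degree condition are inherited (with mildly worse parameters), so a transversal $K_r$-copy always exists to be removed. Iterating until only $O(\alpha^2 n)$ vertices remain per part and then invoking the absorber yields the desired $K_r$-factor. The main obstacle is the uniform absorption step: a \emph{single} random $\cA$ must handle \emph{every} possible small leftover simultaneously, which is the delicate Hall/König-type matching argument that forms the technical heart of the Komlós--Sárközy--Szemerédi proof.
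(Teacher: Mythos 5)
You are proving a statement that the paper itself does not prove: Lemma~\ref{l:blow-up} is quoted from the Blow-up Lemma of Koml\'os, S\'ark\"ozy and Szemer\'edi~\cite{kss97}, so there is no in-paper argument to compare against. It is also worth noting that the actual KSS proof is not an absorption argument: it embeds a general bounded-degree spanning graph by a randomized greedy procedure with a reserved set of buffer vertices, and the K\H{o}nig--Hall step there plays a different r\^ole than in your sketch. For the special case at hand (a transversal $K_r$-factor in a super-regular $r$-tuple) a self-contained absorption proof, or one via hypergraph matching theorems, is indeed feasible, so your overall strategy is legitimate; the counting step (every vertex lies in $\Omega(n^{r-1})$ transversal $K_r$-copies) and the greedy peeling down to a small balanced leftover, with regularity maintained via Lemma~\ref{l:slicing}, are fine modulo routine details.

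The genuine gap is in the design of your absorbing gadgets. A ``swap gadget'' that admits two partial factors differing only in that an external leftover vertex $v$ takes the place of a designated free vertex $f_i$ does not make progress: after the swap, $f_i$ is uncovered, so the deficiency has merely been relocated from $v$ into the absorber, and iterating this can never terminate in a perfect factor. (Note also that vertex-by-vertex absorption is impossible for divisibility reasons: there is no set $S$ such that both $G[S]$ and $G[S\cup\{v\}]$ have $K_r$-factors.) The standard repair is to absorb \emph{transversal $r$-sets} $T=(t_1,\dots,t_r)$, $t_i\in V_i$, using gadgets $S=C_1\cup\dots\cup C_r$ consisting of $r$ transversal cliques $C_i$ with designated vertices $c_i=C_i\cap V_i$ such that (a) $(C_i\setminus\{c_i\})\cup\{t_i\}$ spans $K_r$ for each $i$, and (b) the displaced vertices $c_1,\dots,c_r$ themselves span a transversal $K_r$; then both $G[S]$ and $G[S\cup T]$ have factors, which is exactly the property your gadgets lack. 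With this ``socket'' structure the compatibility of $t_i$ with a gadget depends on $t_i$ alone, so your per-vertex counting, Chernoff-plus-union-bound over the $rn$ vertices, and a Hall/greedy assignment of leftover transversal tuples to pairwise disjoint gadgets do go through (the leftover from peeling transversal cliques is automatically balanced, so it can be partitioned into transversal $r$-sets). You would also need to define $\cA$ as the union of the randomly selected gadgets (so that unused gadgets cover themselves in their default mode), and to verify the $\Omega(n^{r^2-1})$-type count of such gadgets through any fixed socket assignment, which again follows from super-regularity. As written, however, the absorption step fails, and it is precisely the heart of the argument.
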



\section{Lower bound for the threshold probability}
\label{sec:lower-bound}

We will now prove Theorem~\ref{t:lower-bound} which gives lower bound for the threshold probability. The lower bound will be proved in more generality, for all $r = ms+t$ with $0 < t \leq s$ and $m$ an integer. Our task is to define a family of $n$-vertex graphs $G$ satisfying  $\delta(G) \geq \left(1-\frac{s}{r}\right)n$ and such that for some constant $c$ and $p \leq c p_s$ the probability that the graph $G^p = G \cup \Gnp$ contains a $K_r$-factor is \textit{small}.

\subsection{The case $s=2$}

Fix $c<1$ and let $p=p(n)=c\frac{\log n}{n}$. For $n$, a large multiple of $r$, let $G$ be the complete $n$-vertex $\lceil\frac{r}2\rceil$-partite graph with $\lfloor\frac{r}2\rfloor$ vertex classes of size $\frac{2n}{r}$ and, if $r$ is odd, one further class of size $\frac{n}{r}$. Clearly $\delta(G)=n-\frac{2n}{r}=\left(1-\frac{2}{r}\right)n$. Assume for contradiction that $G^p=G\cup G_{n,p}$ has a~$K_r$-factor $\mathcal{F}$. Let $A$ be one of the vertex classes of size $\frac{2n}{r}$ and for each clique $K\in\mathcal{F}$ set
\[W(K) = |V(K) \cap A|-2.\]
Note that 
\[ \sum_{K\in\mathcal{F}} W(K) = |A| - 2|\mathcal{F}| = 0.\]
Denote by $\mathcal{F}^+$ the family of those cliques $K\in\mathcal{F}$ for which $W(K) > 0$, and by $\mathcal{F}^-$ the family of those for which $W(K) < 0$, and observe that
\[\sum_{K\in\mathcal{F}^+} |W(K)| = \sum_{K\in\mathcal{F}^-} |W(K)|. \]
The family $\mathcal{F}^+$ consists of exactly those cliques $K$ which have at least three vertices in $A$, hence they induce at least one copy of $K_3$ in $G^+[A]$, and by definition of $G$ all edges of such $K_3$ need to come from the random graph $G_{n,p}$.

Let $X$ denote the number of isolated vertices in $G_{n,p}[A]$. Then
\[ \E X = \frac{2n}{r}\left(1-c\frac{\log n}{n}\right)^{\frac{2n}r-1}=\Theta\left(n^{1-\frac{2c}{r}}\right).\]
Hence, by Markov's inequality, we get that $A$ has at least $\Theta(n^{1-\frac{c}{r}})$ isolated vertices in $G_{n,p}$. Each such vertex must be covered by some clique $K\in\mathcal{F}^{-}$. Thus 
\[ \sum_{K\in\mathcal{F}^{-}} W(K) \geq \Theta(n^{1-\frac{c}{r}}). \]

On the other hand, using again Markov's inequality one can show that w.h.p.\,the number of copies of $K_3$ in $G_{n,p}[A]$ is at most $O(\log^3 n)$ and the number of copies of $K_4$ in $G_{n,p}[A]$ is $o(1)$, whence
\[ \sum_{K\in\mathcal{F}^{+}} W(K) = O(\log^3 n),\]
and we reach a contradiction.

\subsection{The case $3 \leq s < r$}

For two graphs $F$ and $H$ let $\emb(F,H)$ denote the family of all embeddings from $F$ to $H$, that is injective maps $\psi:V(F)\to V(H)$ which preserve adjacency (i.e.\ for any $x,y\in V(F)$, $xy\in E(F)$ if and only if $\psi(x)\psi(y)\in E(H)$). 

\begin{lemma}\label{lem:emb}
    For every integer $\ell > 0$ there are $L, n_0 > 0$ such that for any $n \geq n_0$ and $k$ with $(\log n)^2 \leq k < \frac{n}{3}$ there exists a bipartite $n$-vertex graph $H$ with $\delta(H)\geq k$ and such that 
    \[ |\emb(F,H)| \leq Ln^{v(F)}\left(\frac{k}{n}\right)^{e(F)}\]
    for any graph $F$ with $v(F)\leq \ell$. 
\end{lemma}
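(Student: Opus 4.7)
\medskip
\noindent\textbf{Proof plan.}
The plan is to construct $H$ by the probabilistic method, taking $H$ to be a random bipartite graph with parts of sizes $\lceil n/2 \rceil$ and $\lfloor n/2 \rfloor$, each potential edge included independently with probability $p = 3k/n$. The assumption $k < n/3$ guarantees $p < 1$, so this is a well-defined random graph. First I would verify the minimum degree: each vertex of $H$ has binomially distributed degree with mean approximately $3k/2$, and since $k \geq (\log n)^2$, a standard Chernoff bound gives
$$\mathbb{P}\bigl[\deg_H(v) < k\bigr] \leq \exp\bigl(-\Omega(k)\bigr) \leq \exp\bigl(-\Omega((\log n)^2)\bigr) = o(n^{-1}).$$
A union bound over the $n$ vertices then yields $\delta(H) \geq k$ with probability $1 - o(1)$.

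For the embedding counts I would apply a straightforward first moment argument. Fix a labeled graph $F$ with $v = v(F) \leq \ell$ vertices and $e = e(F)$ edges. Each of the at most $n^v$ injective maps $\psi \colon V(F) \to V(H)$ constitutes an embedding only if the $e$ edges of $F$ all map to edges of $H$, which has probability at most $p^e$ (the non-adjacency constraints only decrease this probability further). Hence
$$\mathbb{E}\bigl[|\emb(F, H)|\bigr] \leq n^v p^e = 3^e \cdot n^v (k/n)^e.$$
Let $N_\ell$ denote the (finite) number of labeled graphs on at most $\ell$ vertices, and set $L = 4 N_\ell \cdot 3^\ell$. Markov's inequality then gives $\mathbb{P}\bigl[|\emb(F,H)| > L n^{v(F)} (k/n)^{e(F)}\bigr] \leq 1/(4 N_\ell)$ for each such $F$, so by a union bound the embedding bound holds simultaneously for all $F$ with $v(F) \leq \ell$ with probability at least $3/4$.

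Combining both events, for $n$ sufficiently large the two properties hold simultaneously with probability at least $3/4 - o(1) > 0$, so the desired $H$ exists. The argument is essentially routine and presents no serious obstacle; the only mildly delicate point is that a single constant $L$ must work uniformly for every graph $F$ with $v(F) \leq \ell$, which is handled automatically by the union bound over the finite family of such graphs. Notably, no finer concentration tool (Janson, Kim--Vu, or martingale methods) is required, since we need only an \emph{upper} bound on the embedding counts.
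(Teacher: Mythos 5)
Your proposal is correct and follows essentially the same route as the paper: a random bipartite graph with edge probability $3k/n$, a Chernoff bound for the minimum degree (using $k \geq (\log n)^2$), and a first-moment bound on $|\emb(F,H)|$ combined with Markov's inequality and a union bound over the finitely many graphs $F$ with $v(F)\leq \ell$. The only cosmetic difference is that the paper writes the expectation with the factor $(1-3k/n)^{\binom{v(F)}{2}-e(F)}$ for the induced (non-adjacency) constraints, which you correctly observe can simply be bounded by $1$.
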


\begin{proof}
    Let $H$ be a random bipartite graph $G_{\frac{n}2,\frac{n}2,\frac{3k}{n}}$. Then for any vertex $v$ in $H$ the expected number of edges incident with $v$ is $\frac32k$, and by Chernoff bound the probability that there is at most $k$ edges incident with $v$ is bounded from above by $exp\left(-\frac{k}{12}\right)$. Taking a union bound, we get that with probability $1-o(1)$ we have $\delta(H)\geq k$.
	
    Now let $F$ be a graph with $v(F)\leq \ell$. Then
    \[ \E |\emb(F,H)| = (n)_{v(F)} \left(\frac{3k}{n}\right)^{e(F)}\left(1-\frac{3k}{n}\right)^{{v(F)\choose 2}-e(F)} = O\left(n^{v(F)}\left(\frac{k}{n}\right)^{e(F)}\right).\]
	
    Taking $L$ sufficiently large and using Markov's inequality, we can ensure that with positive probability for each $F$ with $v(F)\leq \ell$ the number of embeddings in $H$ is at most $Ln^{v(F)}\left(\frac{k}{n}\right)^{e(F)}$.
\end{proof}

We will now construct the graph $G$. For a fixed $s$ with $3\leq s < r$, let $n$ be a large multiple of $r$ and $k=n^{1-\varphi(s)}$. Let $V$ be a set of size $n$, and $A \subset V$ with $|A| = \frac{sn}{r}+k$. Let $G$ be a graph on vertex set $V$ obtained in the following way. For $H$, a bipartite graph given by Lemma~\ref{lem:emb} with $\ell, k, n$ equal to, respectively, $s+1$, $n^{1-\varphi(s)}$ and $\frac{sn}{r}+k$, we take $G$ so that $G[A]$ is isomorphic to $H$, $G[V\setminus A]$ is the complete graph and $G[A,V\setminus A]$ is the complete bipartite graph.
Then $G$ satisfies the following conditions:
\begin{itemize}
	\item[(G1)] ${V\choose 2} \setminus {A\choose 2} \subseteq E(G)$, 
	\item[(G2)] $\delta(G[A]) \geq k$,
	\item[(G3)] $G[A]$ contains no copy of $K_{s+1}$, since $s\geq 3$ and $H$ is bipartite,
	\item[(G4)] if $F$ is a graph with $v(F)=s+1$, then $|\emb(F,G[A])| \leq Ln^{s+1}\left(\frac{k}{n}\right)^{e(F)}$. 
\end{itemize}

Clearly, by (G1) and (G2) we have $\delta(G)\geq k + |V\setminus A| = n -\frac{s}{r}n = \left(1-\frac{s}{r}\right)n$. Take $L \gg r$ at least as large as $L$ given by Lemma~\ref{lem:emb} with $\ell=s+1$, $c \ll L^{-1}$,
 and  $p=cn^{-\varphi(s)}=\frac{ck}{n}$. Assume for contradiction that $G^p=G\cup G_{n,p}$ has a $K_r$-factor $\cF$. First notice that $G^p[A]$ must contain at least $\Omega(k)$ copies of $K_{s+1}$. Indeed, each copy of $K_r$ in $\cF$ which doesn't yield a copy of $K_{s+1}$ in $G^p[A]$ can cover at most $s$ vertices in $A$ and thus have to cover at least $r-s$ vertices in $V\setminus A$. Therefore, the number of vertices in $A$ covered by such copies of $K_r$ is at most
$ \frac{s|V\setminus A|}{r-s} = \frac{sn}{r} - \frac{sk}{r-s}.$ The remaining at least $\frac{kr}{r-s}$ vertices must belong to at least one copy of $K_{s+1}$ in $G^p[A]$, which give us at least $\frac{kr}{(r-s)(s+1)}$ such copies. Next, each of them arises as the union of some graph $F\neq K_{s+1}$, $v(F)=s+1$, and ${s+1\choose 2}-e(F)>0$ random edges from $G_{n,p}$. However, the number of copies of $K_{s+1}$ in $G^p[A]$ arising in this way has expectation bounded from above by
\begin{align*}
	\sum_{F}|\emb(F,G[A])|\, p^{{s+1\choose 2}-e(F)} & \leq \sum_F Ln^{s+1}\left(\frac{k}{n}\right)^{e(F)}\left(\frac{ck}{n}\right)^{{s+1\choose 2}-e(F)}  \\
	& \leq \sum_F c Ln^{s+1}\left(\frac{k}{n}\right)^{{s+1\choose 2}} \leq 2^{{s+1\choose 2}}cLn^{s+1-{s+1\choose 2}\varphi(s)}.
\end{align*}
Note that
\[ s+1 - {s+1\choose 2}\varphi(s) = (s+1)\left(1 - \frac{s^2}{(s-1)(s+2)} \right) = \frac{s^2-s-2}{(s-1)(s+2) = 1-\varphi(s)},
\]
hence the above upper bound is equal to
\[ 2^{{s+1\choose 2}}cLn^{1-\varphi(s)} = 2^{{s+1\choose 2}}cLk. \]
Finally, by Markov's inequality, with probability at least $1-\frac{1}{L}$, there are at most $2^{{s+1\choose 2}}cL^2k$ copies of $K_{s+1}$ in $G^p[A]$. Taking $c$ sufficiently small with respect to $L$, we can make this number smaller than $\frac{kr}{(r-s)(s+1)}$, getting a contradiction. Thus, taking $L = 1/\eps$ (and $c\ll L^{-1}$) the probability that $G^p$ has a $K_r$-factor is at most $\eps$, which concludes the proof of Theorem~\ref{t:lower-bound}.


\section{The proof of the main result -- upper bound for the threshold probability}
\label{sec:main-proof}
Recall that we wish to construct a $K_r$-factor in $G^p = G \cup \Gnp$, where $G$ is an $n$-vertex graph with $n$ a large multiple of $r$,
\[\delta(G) \geq \left( 1 - \frac{s}{r}\right) n \ \ \text{ and } \ \ p = Cp_s(n).\] 

In order to prove the main result we will distinguish two cases for the graph $G$, given by the following lemma the proof of which can be found in Section~\ref{sec:cases}. 

\begin{lemma}\label{l:4.3chr-simplified}
    Let $\alpha \in (0, 1)$, and let $\gamma, \beta$ be positive constants satisfying $8\gamma < \beta < \frac 15(1-\alpha)$.  If $\delta(G) \geq \alpha n$, then one of the following holds in $G$.
    \begin{enumerate}
	\item[(a)] There is a partition $A_1 \dcup A_2 = V(G)$ with $|A_1| = (1-\alpha \pm \gamma ) n$ such that 
	\begin{itemize}
		\item[(i)] for $x \in A_1$, $|A_2 \setminus N_G(x)| \leq 4\beta n$, 
		\item[(ii)] for $x \in A_2$, $|N_G(x)\cap A_1| \geq \beta n$,
		\item [(iii)] there are at most $\gamma n^2$ missing edges between $A_{1}$ and $A_2$.
	\end{itemize} 
	\item[(b)] Every $X \subset V(G)$ with $|X|=(1-\alpha)n$ satisfies $e_G(X) \geq \gamma^2 n^2$.
    \end{enumerate}
\end{lemma}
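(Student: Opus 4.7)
The plan is to prove the contrapositive: assuming case (b) fails, I will construct a partition witnessing (a). Fix $X \subset V(G)$ with $|X| = (1-\alpha)n$ and $e_G(X) < \gamma^2 n^2$, and write $Y = V(G) \setminus X$, so $|Y| = \alpha n$.

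The first step is to bound the number of missing edges across the cut $(X, Y)$. The degree-sum identity $\sum_{v \in X} d_G(v) = 2 e_G(X) + e_G(X, Y)$, combined with the bound $\sum_{v \in X} d_G(v) \geq \alpha(1-\alpha) n^2$ coming from $\delta(G) \geq \alpha n$ and the trivial $e_G(X, Y) \leq |X||Y| = \alpha(1-\alpha) n^2$, forces the number of non-edges between $X$ and $Y$ to be at most $2 \gamma^2 n^2$. This is already much smaller than the $\gamma n^2$ target of (iii), so the remaining task is to slightly ``correct'' $(X, Y)$ in order to satisfy (i) and (ii) as well.

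Next, I would define the sets of ``bad'' vertices on each side,
\[B' = \{v \in X : |Y \setminus N_G(v)| > 3 \beta n\}, \qquad B = \{v \in Y : |N_G(v) \cap X| < 2 \beta n\},\]
and then set $A_1 = (X \setminus B') \cup B$ and $A_2 = V(G) \setminus A_1$. A routine double-counting using the bound on non-edges from the first step --- with $\beta > 8\gamma$ absorbing the $3\beta n$ in the definition of $B'$, and $\beta < (1-\alpha)/5$ used to bound the factor $(1-\alpha) - 2\beta$ from below --- yields $|B'|, |B| \leq \gamma n / 12$. In particular $|A_1| = (1-\alpha) n \pm \gamma n$ as required.

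Finally, I would verify each clause of (a) by a short case analysis according to whether the vertex was moved by the correction. Condition (i) holds for $v \in X \setminus B'$ directly from the definition of $B'$, and for $v \in B$ because $\delta(G) \geq \alpha n$ together with $|N_G(v) \cap X| < 2\beta n$ forces $v$ to be adjacent to nearly all of $Y$; in both cases the $|B'|$-sized correction is absorbed into the slack $4 \beta n - 3 \beta n$. Condition (ii) is analogous, with the gap between $2\beta n$ (resp.\ $3\beta n$) and $\beta n$ absorbing the correction $|B'| \leq \gamma n/12$; the case $v \in B'$ uses $\delta(G) \geq \alpha n$ once more to deduce $|N_G(v) \cap X| > 3 \beta n$. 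Condition (iii) follows by decomposing the non-edges across $(A_1, A_2)$ into those across $(X, Y)$, bounded by $2\gamma^2 n^2$, and those incident to $B \cup B'$, crudely bounded by $|X||B'| + |Y||B| \leq \gamma n^2/6$. The main point of care --- really, the only place where anything technical happens --- is calibrating the thresholds $2\beta n$, $3\beta n$ so that the target constants $4\beta n$ in (i), $\beta n$ in (ii), and $\gamma n^2$ in (iii) emerge cleanly; this is where the full strength of $8\gamma < \beta < \tfrac{1}{5}(1-\alpha)$ is used.
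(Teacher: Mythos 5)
Your proposal is correct and takes essentially the same route as the paper: negate (b), keep the sparse set $X$ as the core of $A_1$, move a small set of atypical vertices across each side of the cut, and verify (i)--(iii) by the same elementary double counting; your stated constants (the thresholds $2\beta n$, $3\beta n$ and the bounds $|B|,|B'|\leq \gamma n/12$, absorbed by $8\gamma<\beta<\tfrac15(1-\alpha)$) all check out. The only cosmetic difference is that you define both bad sets relative to the original cut and bound both by the $\leq 2\gamma^2 n^2$ non-edges across $(X,Y)$ in one simultaneous swap, whereas the paper corrects sequentially, bounding the first bad set via $e_G(X)<\gamma^2 n^2$ and the second relative to the already-corrected partition.
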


We will refer to the two cases above as, respectively, \emph{extremal} and \emph{non-extremal}. The following lemma resolves the latter case, and is the most difficult part of the proof. Note that here a smaller edge probability $p = n^{-2/s+o(1)}<p_s$ suffices. 

\begin{lemma}\label{l:non-extremal}
    Let $\gamma >0$, and let $G$ be a graph with minimum degree at least $(1-\frac sr)n$ in which every set of size $sn/r$  contains at least $\gamma n^2$ edges. For sufficiently large $C$ and $p = Cn^{-2/s}(\log n)^{2/(s(s-1))}$, w.h.p.\,the graph $G^p = G \cup \Gnp$ has a $K_r$-factor.
\end{lemma}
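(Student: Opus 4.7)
The plan is to combine the regularity method with the fractional Kahn--Kalai framework of Theorem~\ref{thm:fractional-kk} and Lemma~\ref{l:coupling}, reducing the $K_r$-factor problem to finding a near-perfect $K_{s,t}$-tiling in the reduced graph and then resolving a $K_r$-factor problem inside each super-regular bipartite blow-up. Let $t = r-s$. First I would apply Lemma~\ref{l:regularity} to $G$ with parameters $\eps \ll d \ll \gamma$ and a large lower bound $\ell_0$, obtaining an $\eps$-regular partition whose $(\eps,d)$-reduced graph $R$ has minimum degree at least $(t/r - d - 2\eps)\ell$. A simple averaging shows that the non-extremal hypothesis survives: every $X \subseteq V(R)$ of size $s\ell/r$ spans $\Omega_\gamma(\ell^2)$ edges of $R$. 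The stability version of the Shokoufandeh--Zhao-type results developed in Section~\ref{sec:6-chr} then produces a $K_{s,t}$-tiling $\mathcal{T}$ of $R$ covering all but a bounded number of clusters; here the non-extremal hypothesis is essential, since the minimum degree of $R$ lies well below the usual extremal threshold for a $K_{s,t}$-factor. After standard cleaning and an application of Lemma~\ref{l:super-reg-min}, each $K_{s,t}\in \mathcal{T}$ lifts to a super-regular bipartite pair $(S,T)$ with $|S|=sm'$, $|T|=tm'$ for a slightly reduced cluster size $m'$.

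Before the regularity step I would set aside a small random absorber $\absorber$, so that the exceptional set $V_0$ together with the clusters left uncovered by $\mathcal{T}$ can be collected into a set $L$ of size $o(n)$ and packed by disjoint $K_r$-copies in $G^p$ that also consume a controlled number of vertices of $\absorber$. Lemma~\ref{l:5.5chr} supplies the many disjoint $K_{s+1}$-copies needed as building blocks for the absorbers, and Lemma~\ref{l:x-cover} provides the Kahn--Kalai-based covering that extends $L$ into a partial $K_r$-factor w.h.p.\ What then remains is to find a $K_r$-factor inside each super-regular pair $(S,T)$, where each $K_r$ comprises $s$ vertices of $S$ forming a $K_s$ and $t$ vertices of $T$ forming a $K_t$, with the $st$ bipartite edges supplied by super-regularity.

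For this final step I would adopt the spread-measure approach of~\cite{psss22}. Let $\mathcal{H}$ denote the collection of such ``factored'' $K_r$-factors of $G^p[S\cup T]$, and construct a $q$-spread measure $\mu$ on $\mathcal{H}$ with $q$ of order $p^{\binom{s}{2}}/\log n = n^{-(s-1)}$, generated as follows: pick a uniformly random partition of $S$ into $s$-blocks and of $T$ into $t$-blocks, and then a uniformly random matching between the two block systems conditioned on every matched pair spanning a $K_{s,t}$ in the super-regular bipartite graph. Super-regularity ensures that this conditioning event has probability $1-o(1)$, so the spread bound reduces (up to a constant) to the separate spread bounds of uniform random $K_s$-factors on $S$ and $K_t$-factors on $T$; the binding exponent is governed by the larger side $s$ since $s \geq t$. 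Passing to the $s$-uniform coupling of Lemma~\ref{l:coupling} and applying Theorem~\ref{thm:fractional-kk} then yields the $K_r$-factor w.h.p.\ The main obstacle I anticipate is verifying the spread bound uniformly over all possible sets of forced cliques: one has to control correlations between the two partitions and the matching constraint, via a careful counting argument that splits into cases based on how many forced cliques lie on each side and on the bipartite matching, while confirming that the $K_{s,t}$-conditioning contributes only a $(1\pm o(1))$-factor.
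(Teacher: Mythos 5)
Your high-level skeleton (regularity, a stability tiling of the reduced graph giving super-regular pairs of side-ratio $s:t$, and a Kahn--Kalai spread argument inside each pair) is the same as the paper's, but two essential steps are missing or would fail. First, the balancing. After you cover $V_0$ and the uncovered clusters by $K_r$-copies, the surviving pairs no longer satisfy $t|S|=s|T|$ exactly, nor the divisibility of $|S|+|T|$ by $r$, and without exact balance no $K_r$-factor of the pair can exist for trivial counting reasons; worse, if the covering ever eats into the $T$-sides so that $|S|/|T|>s/t$, any $K_r$-factor of that pair would force a $K_{s+1}$ inside $S$, which is unavailable at $p=n^{-2/s+o(1)}$. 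This is why the paper builds property (ii) of Lemma~\ref{l:st-reduced} (every vertex has many neighbours in $\bigcup_i S_i$), covers the leftover set via Lemma~\ref{l:x-cover} using \emph{only} $S$-side vertices, and then runs a dedicated balancing phase (Lemma~\ref{l:divb} with properties (P1)--(P2); in the singular case $s=t$ additionally trail-based alterations and the near-two-clique configuration, which needs Lemma~\ref{l:disconnected}). Your absorber sketch does not address any of this, and the appeal to Lemma~\ref{l:5.5chr} is misplaced — that lemma is the bottleneck of the \emph{extremal} case and plays no role here. (Also, Section~\ref{sec:6-chr} does not give a $K_{s,t}$-tiling of $R$ directly; it gives a $K_2$-and-$\Qone$ tiling which is converted into ratio-$(s:t)$ pairs via the $K_2^*$-packing and a random splitting — a cosmetic difference, but worth noting, as is the separate treatment the singular case requires.)

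Second, the final spread construction inside a super-regular pair does not work as described. Conditioning a uniformly random matching of the block systems on ``every matched pair spans a $K_{s,t}$'' is an event of probability $e^{-\Theta(n)}$ (each block pair is complete with probability roughly $d^{st}$), not $1-o(1)$; and even the weaker event that \emph{some} perfect matching of fully connected block pairs exists fails w.h.p.: super-regularity with $\vartheta<1/2$ permits two groups of nearly $\eps|S|$ vertices of $S$ whose neighbourhoods in $T$ are disjoint, so a uniformly random $s$-block contains one vertex from each group with probability $\Omega(\eps^2)$, and w.h.p.\ $\Theta(\eps^2 n)$ blocks have common neighbourhood too small to host any $t$-block. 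Hence the proposed measure is either conditioned on a vanishing event or not well defined, and the claimed reduction of its spread to that of independent uniform $K_s$- and $K_t$-factors is exactly the uncontrolled correlation you flag without resolving. The paper (following~\cite{psss22}) avoids this by building the blocks \emph{inside} the graph, one matching at a time, via the local $C$-random-neighbour sparsification of Lemma~\ref{l:two-spread}, collapsing and restoring super-regularity with Lemma~\ref{l:collapse-regular}, and inducting (Lemma~\ref{l:spread}, then Lemmas~\ref{l:regular-pairs-factors} and~\ref{c:st-reg-pairs-factors}), so that completeness of the matched pairs is automatic and no conditioning on rare global events is ever needed.
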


In the extremal case, we will construct the factor with copies of $K_r$ going across the partition $A_1 \dcup A_2$, 
most of which will contain $s$ vertices in $A_1$ and $t=r-s$ vertices in $A_2$. However, when $|A_1| > \frac{sn}{r}$ we will additionally need to find some copies of $K_{s+1}$ in $G^p[A_1]$, which is stated in the following lemma.
Note that this lemma is actually the bottleneck for the threshold probability $p_s$ when $s \geq 3$, as indicated by the extremal construction in Section~\ref{sec:lower-bound}.

\begin{lemma}\label{l:5.5chr}
    For all $s\geq 2$, there are constants $\beta, C>0$ such that the following holds. Let $G$ be an $n$-vertex graph with minimum degree $g\leq \beta n$. 
    W.h.p.\,the graph $G^p = G \cup \Gnp$, with $p = Cp_s(n)$, contains a collection of $g$ vertex-disjoint copies of $K_{s+1}$. 
\end{lemma}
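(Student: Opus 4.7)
The plan is to count, via first-moment, the $K_{s+1}$-copies in $G^p$ built around edges of $G$, and then extract $g$ pairwise vertex-disjoint copies via a concentration and deletion argument (or, in the very small-$g$ regime, via the pure random-graph count).

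For each edge $e=\{u,v\}\in E(G)$ and each $(s-1)$-subset $T\subset V\setminus\{u,v\}$, let $\xi_{e,T}$ be the indicator that $e\cup T$ forms a $K_{s+1}$ in $G^p$. Each of the $\binom{s+1}{2}-1$ non-$e$ edges of $e\cup T$ lies in $G^p$ with probability at least $p$, so $\mathbb{E}\,\xi_{e,T}\geq p^{\binom{s+1}{2}-1}$. The algebraic identity $\varphi(s)\cdot\bigl(\binom{s+1}{2}-1\bigr)=s$, which is immediate from $\varphi(s)=\tfrac{2s}{(s-1)(s+2)}$, gives $p^{\binom{s+1}{2}-1}=\Theta(C^{(s-1)(s+2)/2}n^{-s})$ for $s\geq 3$ (and $p^{2}=\Theta(C^{2}\log^{2}n\,/\,n^{2})$ for $s=2$). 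Summing over the at least $ng/2$ edges of $G$ and the $\sim n^{s-1}/(s-1)!$ extension sets, and then dividing by the overcounting factor $\binom{s+1}{2}$ (each $K_{s+1}$-copy contains at most that many $G$-edges), the expected number $\mathbb{E}X$ of distinct $K_{s+1}$-copies in $G^p$ that use at least one edge of $G$ is at least $Kg$, where $K=K(C)$ grows as a positive power of $C$ and can be made arbitrarily large.

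To extract $g$ vertex-disjoint copies, apply Janson's inequality (in the style of standard clique-count concentration in $G_{n,p}$, adapted to account for deterministic $G$-edges, which only decrease the exponent of $p$) to get $X\geq Kg/2$ w.h.p. In parallel, bound the expected number $\mathbb{E}Y$ of unordered pairs of $K_{s+1}$-copies sharing at least one vertex via a case analysis over intersection sizes of the underlying vertex sets. Combined with $g\leq\beta n$, this yields $\mathbb{E}Y\leq K'\beta g$ for some $K'=K'(C)$; choosing $\beta$ small enough (depending on $C$), Markov's inequality then gives $Y\leq Kg/4$ w.h.p. Deleting one copy from each overlapping pair leaves at least $X-Y\geq Kg/4\geq g$ pairwise vertex-disjoint $K_{s+1}$-copies. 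The very-small-$g$ regime $g\leq n^{1-\varphi(s)}/(s+1)^{2}$ may alternatively be handled by finding, via Lemma~\ref{l:coupling} with parameter $s+1$, $\Theta(n^{1-\varphi(s)})$ pairwise vertex-disjoint $K_{s+1}$-copies inside $G_{n,p}$ alone.

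The main obstacle is the overlap bound $\mathbb{E}Y\leq K'\beta g$: locally around vertices of unusually large $G$-degree, the contribution to $Y$ can be inflated, and the naive estimate $\mathbb{E}Y\lesssim X^{2}(s+1)^{2}/n$ is not uniform in the structure of $G$. To control this, the proof may include a preliminary absorption step that places each high-degree vertex of $G$ into a $K_{s+1}$-copy constructed from a $K_{s}$ in its random-graph neighborhood (again via Lemma~\ref{l:coupling}, using that for $s\geq 3$ the probability $p$ satisfies the hypothesis $p\leq n^{-2/s+\varepsilon}$), leaving a residual graph in which all degrees are comparable to $g$ and the second-moment argument applies cleanly. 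For $s=2$ the whole analysis is considerably easier, as $p=C\log n/n$ makes the expected extension count a factor of $\log^{2}n$ larger than needed and the overlap bound is immediate.
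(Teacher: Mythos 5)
Your first-moment computation (via the identity $\varphi(s)\bigl(\binom{s+1}{2}-1\bigr)=s$, giving $\E X=\Theta(C'g)$) and your reduction of the very small $g$ regime to disjoint $K_{s+1}$-copies in $\Gnp$ alone both match the paper. The gap is exactly at the point you flag as "the main obstacle": the overlap bound $\E Y\leq K'\beta g$ (and likewise the bound on $\bar\Delta$ needed for the Janson lower tail) is false in the stated generality, and your proposed remedy does not repair it. The danger is not high-degree vertices per se but copies whose non-distinguished pairs are \emph{deterministic} edges of $G$: when you say deterministic edges "only decrease the exponent of $p$", that is precisely what ruins the upper bounds, since such copies appear with probability much larger than $p^{\binom{s+1}{2}-1}$. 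Concretely, let $G$ be a disjoint union of cliques of size $g+1$ with $\log^2 n\leq g\leq\beta n$: every degree equals $g$, so your normalisation "all degrees comparable to $g$" is already satisfied, yet $Y$ is deterministically of order $ng^{2s}\ggg g$ (all pairs of $K_{s+1}$'s inside a common clique), so $X-Y<0$ and the deletion step yields nothing, even though the lemma is trivial for this $G$. Moreover the proposed absorption cannot achieve the normalisation anyway: covering a vertex by a random $K_s$ inside its $G$-neighbourhood (Lemma~\ref{l:ks-in-lin-sized}) requires that neighbourhood to have size at least $n/\log^2n$, so only degree-$\Omega(n/\log^2 n)$ vertices can be absorbed, while there may be $\Theta(n)$ vertices of degree between, say, $g\log n$ and $n/\log^{2}n$ which can be neither absorbed nor all deleted; for large $s$ a neighbourhood of size $n^{1-c}$ need not contain any $K_s$ of $\Gnp$ at $p=Cn^{-\varphi(s)}$.

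The paper's proof needs several further ideas precisely to make the second-moment/deletion scheme uniform in the structure of $G$. After reducing to $\log^2 n\leq g< n/(10s\log n)$ and $\Delta(G)\leq\delta n$ (the regime $g\geq n/(10s\log n)$ is handled by a separate greedy argument), it takes a random tripartition $A\dcup B\dcup D$, passes to a bipartite subgraph $G'\subset G[A,B]$ in which every vertex of $A$ has \emph{exactly} $g/5$ neighbours in $B$ (so $e(G')=gn/15$ and the number of cherries in $G'$ is at most $\delta gn^2$), and counts only those $(s+1)$-sets meeting $A\cup B$ in a single $G'$-edge, with the remaining $s-1$ vertices in $D$ and with \emph{all} other $\binom{s+1}{2}-1$ pairs required to lie in $\Gnp$. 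This forces every counted copy to have probability exactly $p^{\binom{s+1}{2}-1}$, independently of how dense $G$ is, and the intersecting-pair count is then bounded by a case analysis over the intersection size and its position relative to $D$, using the cherry bound and $\Delta(G)\leq\delta n$ with $\delta\ll C^{-2s^2}$. Without some device of this kind (restricting which copies are counted, not just normalising degrees), the estimate $\E Y=O(\beta g)$ at the heart of your plan cannot be salvaged.
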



Another important ingredient is a  lemma which enables us to find $K_r$-factors in randomly perturbed super-regular pairs. In the present paper, the Lemma is only used for $m=1$.

\begin{lemma}\label{c:st-reg-pairs-factors}
    Let $r=ms+t$ with $t \in [s]$. For all $\vartheta \gg \eps \gg C^{-1}$, the following holds. Let $G$ be an $(m+1)$-partite graph with vertex classes $V_1 \dcup \dots \dcup V_{m+1} = V$ satisfying 
    $$|V_i| = \frac{s|V|}{r} \ \text{ for } i \in [m] \text{ \quad and \quad } |V_{m+1}| = \frac{t|V|}{r}.$$ Suppose that each pair $(V_i, V_j)$, $i \neq j$, is $( \eps, d, \vartheta)$-super-regular with $d \geq \vartheta$. For $p = C{n^{-2/s}}\left(\log n \right)^{2/(s(s-1))}$, if $n=|V|$ is divisible by $r$, then w.h.p.\,the graph $G \cup \Gnp$ contains a $K_r$-factor.
\end{lemma}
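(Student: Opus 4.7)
The plan is to apply Theorem~\ref{thm:fractional-kk} after translating to the hypergraph setting via Lemma~\ref{l:coupling}, following the ideas of~\cite{psss22}. Since the lemma is only used for $m=1$, I focus on the bipartite case: $r=s+t$ with $t\le s$, $V=V_1\dcup V_2$, $|V_1|=sn/r$, $|V_2|=tn/r$, and $(V_1,V_2)$ is $(\eps,d,\vartheta)$-super-regular in $G$. Each target $K_r$ will place $s$ vertices in $V_1$ and $t$ in $V_2$, with the $st$ cross-edges supplied by $G$ and the $\binom{s}{2}+\binom{t}{2}$ internal edges by $\Gnp$.

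First I would apply Lemma~\ref{l:coupling} separately on $V_1$ and on $V_2$ to pass to random hypergraphs $H_1$ on $V_1$ (edge probability $\pi_1\sim p^{\binom{s}{2}}$) and $H_2$ on $V_2$ (edge probability $\pi_2\sim p^{\binom{t}{2}}$), whose hyperedges are automatically cliques in $\Gnp$. Let the ground set be $Z=\binom{V_1}{s}\cup\binom{V_2}{t}$, and let $\cH$ consist of all ``$G$-compatible $K_{s,t}$-matchings'' $\{(A_i,B_i)\}_{i=1}^{n/r}$ whose $A_i$ partition $V_1$, whose $B_i$ partition $V_2$, and for which every pair in $A_i\times B_i$ is an edge of $G$. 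Any such matching contained in $H_1\cup H_2$ yields a $K_r$-factor in $G\cup\Gnp$.

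Next I would construct a $q$-spread probability measure on $\cH$ via a random greedy sampler: in round $i$, with leftovers $V_j^{(i)}\subseteq V_j$, pick $A_i$ uniformly from $\binom{V_1^{(i)}}{s}$ and then $B_i$ uniformly from $\binom{N_G(A_i)\cap V_2^{(i)}}{t}$. While $|V_j^{(i)}|\ge\eps n$, the super-regularity of $(V_1,V_2)$, preserved on the leftover by Lemma~\ref{l:slicing}, keeps $|N_G(A_i)\cap V_2^{(i)}|=\Omega(n)$ for almost every $A_i$, so the sampler is well defined. A short calculation (summing $1/\binom{|V_1^{(i)}|}{s}$ over $i\le n/r$) shows that any fixed $s$-set $A\in\binom{V_1}{s}$ appears as some $A_i$ with probability $O(n^{-(s-1)})$, and similarly each $t$-set appears with probability $O(n^{-(t-1)})$; these exactly match the $q\sim\pi_1/\log n$ and $q\sim\pi_2/\log n$ required by Theorem~\ref{thm:fractional-kk}. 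For the final $o(n)$ leftover vertices I would use a pre-reserved absorbing sub-pair, inside which $\Gnp$ w.h.p.\ supplies the necessary internal cliques to close the factor. Feeding the resulting $q$-spread measure into Theorem~\ref{thm:fractional-kk} and invoking Lemma~\ref{l:coupling} yields the desired $K_r$-factor in $G\cup\Gnp$.

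The main obstacle is the spread bound for sets $S\subseteq Z$ that mix $s$-subsets of $V_1$ with $t$-subsets of $V_2$ coupled through the constraint $B_i\subseteq N_G(A_i)$; the two sides are not independent under the greedy measure. Adapting the clique-counting templates of~\cite{psss22}, I would split $S$ by side and by the round at which each of its elements is first sampled, and bound the conditional number of $G$-compatible extensions of each partial $K_{s,t}$-matching via a super-regularity counting estimate. This counting --- which explains precisely why the threshold matches $n^{-2/s}(\log n)^{2/(s(s-1))}$ --- is the technical heart of the argument.
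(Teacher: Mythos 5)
Your high-level toolkit (a spread measure fed into Theorem~\ref{thm:fractional-kk}, then the Heckel--Riordan coupling of Lemma~\ref{l:coupling}) is the right one, but the way you set up the ground set creates a problem that the stated theorem cannot absorb. You take $Z=\binom{V_1}{s}\cup\binom{V_2}{t}$ and want the two types of elements to appear at two different rates, $\pi_1\sim p^{\binom s2}$ for the $s$-sets and $\pi_2\sim p^{\binom t2}$ for the $t$-sets. Theorem~\ref{thm:fractional-kk}, however, takes a single $q$-spread measure and a single inclusion probability $p=Cq\log|Z|$ for \emph{every} element of $Z$. Your measure is at best $O(n^{-(t-1)})$-spread (witnessed already by single $t$-sets), so the theorem would require all ground-set elements, including the $s$-sets, to be present with probability $\Omega(n^{-(t-1)}\log n)$; but at $p=Cn^{-2/s}(\log n)^{2/(s(s-1))}$ the coupling only supplies $s$-sets with probability about $n^{-(s-1)}\log n\ll n^{-(t-1)}\log n$ when $t<s$. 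You also cannot run two separate applications on the two sides, because the constraint $B_i\subseteq N_G(A_i)$ couples them. One needs either an asymmetric (weighted) version of the theorem, which you neither state nor prove, or a uniformization step padding the $t$-sets to $s$-sets with auxiliary vertices joined to everything --- which is exactly what the paper does before applying the machinery.

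The second gap is the support of your measure and the endgame. The measure must be supported on $\cH$, i.e.\ on exact spanning structures, while your uniform greedy sampler leaves $o(n)$ vertices uncovered, and even the single-set spread bound $O(n^{-(s-1)})$ degenerates in the final rounds when $|V_1^{(i)}|=o(n)$. The ``pre-reserved absorbing sub-pair'' is not a routine patch: the absorber's internal cliques must themselves come from the random graph (so they either enter the spread measure or require a second exposure with its own bookkeeping), the leftover must be absorbed while preserving the exact balance $t|V_1'|=s|V_2'|$, and building such an absorber inside a super-regular pair is essentially the hard part of this kind of argument; you also flag the mixed-set spread verification --- the genuinely technical step --- as open. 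The paper's proof avoids all three difficulties by a different decomposition: it randomly splits each class into parts of size $n/r$, takes as ground set only the \emph{within-class} $K_s$-copies (Lemma~\ref{l:regular-pairs-factors}), defines $\cH$ as ``extendable'' unions of perfect $s$-uniform matchings, where extendability to an exact $K_r$-factor through the deterministic super-regular cross pairs is certified by the Blow-up Lemma (Lemma~\ref{l:blow-up}), and constructs the $O(n^{-(s-1)})$-spread measure inductively via the matching sampler of Lemma~\ref{l:two-spread} and the collapsing Lemma~\ref{l:collapse-regular}. In that formulation there is no cross-side coupling in the spread computation, no leftover to absorb, and no mixed uniformity. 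As written, your proposal is incomplete at the spread verification and unsound at the two points above, so it does not yet constitute a proof.
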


Now we can prove the main theorem.
\begin{proof}
	[Proof of Theorem~\ref{t:main}]
	Let $\beta \ll 1/s$. Let $\gamma$ be sufficiently small for Lemma~\ref{c:st-reg-pairs-factors} to hold with $\eps = \sqrt{\gamma}$ and 
    \[\gamma \leq \min \{\beta_{\ref{l:5.5chr}}, \beta^2 / (8s^2(s+1))\}.\] 
    Moreover, let $C \geq \max \{C_{\ref{l:non-extremal}}, C_{\ref{l:5.5chr}}, C_{\ref{c:st-reg-pairs-factors}} \}$ be sufficiently large.
	
	We assume that (*) any subset of $\Gnp$ of size at least $\frac{n}{\sqrt{\log n}}$ contains a copy of $K_s$, which occurs with high probability by Lemma~\ref{l:ks-in-lin-sized} (or, for $s=2$, by classic results on the independence number of $\Gnp$ found for instance in~\cite{jlr00}).
	
	Apply Lemma~\ref{l:4.3chr-simplified} with $\alpha = 1 - \frac sr$. In Case~(b), every $X \subset V(G)$ with $|X| = \frac{sn}{r}$ contains at least $\gamma^2 n^2$ edges of $G$, so we can find a $K_r$-factor using Lemma~\ref{l:non-extremal}.
	
	So assume Case~(a),  and let $g = |A_1| - \frac{sn}{r}$, with $|g| \leq \gamma n$.  Let $\sigma = g / |g|$ be the sign of $g$.
    \begin{claim}
        With high probability, $G^p$ contains a collection of $g$ vertex-disjoint copies of $K_r$, each using exactly $s+\sigma$ vertices from $A_1$ and $r-s-\sigma$ vertices in $A_2$.
    \end{claim}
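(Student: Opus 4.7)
The plan is to split according to the sign of $g$. In each case I will first find the ``heavier'' clique (of size $s+1$ when $g>0$, or $t+1$ when $g<0$) inside the corresponding part of the partition, and then extend it to a full $K_r$ by supplying the missing vertices from the other part, using the structure of Lemma~\ref{l:4.3chr-simplified}(a) to guarantee the required cross-adjacencies and assumption~(*) to find the completing sub-clique in $\Gnp$.

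\textbf{Case $g>0$.} Each desired $K_r$-copy should have $s+1$ vertices in $A_1$ and $t-1$ in $A_2$. For every $x\in A_1$,
\[|N_G(x)\cap A_1|\;\geq\;\delta(G)-|A_2|\;\geq\;\left(1-\tfrac{s}{r}\right)n-\left(\tfrac{tn}{r}-g\right)\;=\;g,\]
so $\delta(G[A_1])\geq g$. Since $g\leq\gamma n\ll\beta\cdot|A_1|$ in the hierarchy fixed by the proof of Theorem~\ref{t:main}, Lemma~\ref{l:5.5chr} applied to $G[A_1]$ yields w.h.p.\ a family $\mathcal{K}_1,\ldots,\mathcal{K}_g$ of vertex-disjoint $K_{s+1}$-copies in $G^p[A_1]$. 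I then extend each $\mathcal{K}_i$ by a greedy choice of $t-1$ vertices of $A_2$: by property~(i) of Lemma~\ref{l:4.3chr-simplified}(a), the common $G$-neighborhood $N_i\subseteq A_2$ of $V(\mathcal{K}_i)$ has size at least $|A_2|-4(s+1)\beta n\geq tn/(3r)$, and after discarding the at most $g(t-1)\leq r\gamma n$ vertices already used by previous extensions, the available set still has size comfortably above $n/\sqrt{\log n}$. Assumption~(*) then supplies a $K_s$ of $\Gnp$ inside it, any $t-1\leq s$ of whose vertices complete $\mathcal{K}_i$ to a $K_r$ via the deterministic edges from~(i).

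\textbf{Case $g<0$.} Now each $K_r$-copy consists of a $K_{t+1}$ inside $A_2$ together with $s-1$ vertices of $A_1$, and the argument mirrors the previous case. In the non-singular regime $t<s$, one has $t+1\leq s$, so $|g|$ disjoint $K_{t+1}$-copies can be produced directly by repeated applications of assumption~(*) to a linear-sized subset of $A_2$. In the singular regime $t=s$ (so $r=2s$), the symmetric computation $|N_G(v)\cap A_2|\geq\delta(G)-|A_1|\geq|g|$ shows $\delta(G[A_2])\geq|g|$, and Lemma~\ref{l:5.5chr} invoked on $G[A_2]$ supplies $|g|$ disjoint $K_{t+1}=K_{s+1}$-copies in $G^p[A_2]$. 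For the extension step, property~(iii) takes the role that~(i) played before: since at most $O(\sqrt{\gamma}\,n)$ vertices of $A_2$ are \emph{bad} (i.e.\ have more than $\sqrt{\gamma}\,n$ non-neighbors in $A_1$), any $K_{t+1}$ lying in the good part $A_2^{\mathrm{good}}$ has common $A_1$-neighborhood of size at least $|A_1|-(t+1)\sqrt{\gamma}\,n$, from which the required $K_{s-1}$ is extracted via~(*) exactly as above.

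The main technical point that I expect to require additional care is the singular subcase of $g<0$: Lemma~\ref{l:5.5chr} applied to $G[A_2]$ may return $K_{s+1}$-copies that contain bad vertices, whose common $A_1$-neighborhood could be too small to complete the $K_r$. I plan to resolve this either by applying Lemma~\ref{l:5.5chr} to $G[A_2^{\mathrm{good}}]$ directly (exploiting that $|A_2\setminus A_2^{\mathrm{good}}|\leq\sqrt{\gamma}\,n\ll\beta\cdot|A_2|$, so that the minimum-degree loss is modest) or through a short replacement argument that swaps each offending clique for one found in $\Gnp[A_2^{\mathrm{good}}]$, which is abundant in $K_{s+1}$-copies at $p=Cp_s$. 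Beyond this point, the remaining details amount to routine bookkeeping with properties~(i)--(iii) and assumption~(*).
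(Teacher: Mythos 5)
Your treatment of the case $g>0$ coincides with the paper's (minimum degree $g$ inside $A_1$, Lemma~\ref{l:5.5chr}, then greedy extension into $A_2$ via property~(i) and~(*)), and your non-singular $g<0$ argument, though oriented differently from the paper's (the paper finds $K_{s-1}$-copies in $A_1$ greedily and completes them by a $K_{t+1}$ of $\Gnp$ inside $N_G(K)\cap A_2$, while you find the $K_{t+1}$-copies in the good part of $A_2$ and extend into $A_1$), is also sound, since $t+1\leq s$ means these cliques can indeed come entirely from~(*). The genuine gap is exactly where you flagged it, in the singular subcase $r=2s$, $g<0$: neither of your two proposed fixes works. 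For fix (a), the relevant comparison is not $|A_2\setminus A_2^{\mathrm{good}}|$ versus $\beta|A_2|$ but versus $|g|$: the minimum degree of $G[A_2]$ is only guaranteed to be $|g|$, which may be as small as a constant or $\log^2 n$, whereas the bad set $Z$ can have size of order $\sqrt{\gamma}\,n$, and all $|g|$ of a good vertex's $A_2$-neighbours may lie in $Z$. So $\delta(G[A_2^{\mathrm{good}}])$ is only bounded below by $\max(0,|g|-|Z|)$, and Lemma~\ref{l:5.5chr} then yields at most that many cliques — possibly none, and in any case $|Z|$ too few. For fix (b), at $p=Cp_s$ the random graph alone w.h.p.\ contains only $O\bigl(n^{1-\varphi(s)}\bigr)=o(n)$ copies of $K_{s+1}$ in total (this scarcity is precisely the bottleneck exhibited by the lower-bound construction of Section~\ref{sec:lower-bound}); since $|g|$ can be as large as $\gamma n$ and up to $\min(|g|,|Z|)$ cliques may be offending, there are nowhere near enough purely random cliques in $\Gnp[A_2^{\mathrm{good}}]$ to carry out the replacements.

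The missing idea, which is how the paper closes this case, is that bad vertices are not obstacles but resources: if $z\in A_2$ has at least $\beta n/(2s)$ non-neighbours in $A_1$, then, because $\delta(G)\geq n/2$ and $|A_1|\leq n/2$ when $g<0$, $z$ has at least $\beta n/(2s)$ neighbours \emph{inside} $A_2$, so each such $z$ can serve as the anchor of a $K_{s+1}$ built greedily by applying~(*) inside $N_G(z)\cap A_2$ (avoiding $Z$ and previously used vertices). The paper then splits on $|Z|$ versus $|g|$: if $|Z|\geq|g|$, all $|g|$ cliques are anchored at distinct $Z$-vertices; if $|Z|<|g|$, Lemma~\ref{l:5.5chr} is applied to $G[A_2\setminus Z]$, whose minimum degree is at least $|g|-|Z|$, to produce $|g|-|Z|$ cliques disjoint from $Z$, and the remaining $|Z|$ cliques are anchored at the $Z$-vertices as above. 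Every clique obtained this way contains at most one vertex of $Z$, hence has at least $\beta n/2$ common neighbours in $A_1$, which is what makes your extension step (and the paper's) go through. Without this dichotomy and the anchoring trick, your singular subcase remains unproved.
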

    \begin{proof}
    \textbf{Case 1.} If $g\geq 0$, then by assumption of the theorem, $G[A_1]$ has minimum degree at least $g$. Hence, by Lemma~\ref{l:5.5chr}, we can find a collection $\cC_g$ of $g$ vertex-disjoint copies of $K_{s+1}$ in $G^p[A_1]$.
    
    \textbf{Case 2.} If $g<0$ and $r < 2s$ (so that $r-s < s$), let $\cC_g$ consist of $|g|$ vertex-disjoint copies of $K_{s-1}$ in $G^p[A_1]$, which can be found greedily using (*).
    
    In both cases, for each $K \in \cC_g$, recall that $N_G(K)$ denotes the set of common neighbours in graph $G$ of the vertices in $K$. The condition (i) from Lemma~\ref{l:4.3chr-simplified} gives us that
	$$|N_G(K) \cap A_2| \geq |A_2| - 4(s-1)\beta n \geq \frac{|A_2|}{2}.$$
	For each $K \in \cC_g$, using (*), we can find a copy of $K_{r-s-\sigma}$ in $\Gnp[N_G(K) \cap A_2]$; this is possible since {$r-s-\sigma \leq r-s+1 \leq s$}. Moreover, since $|g|s \leq \gamma s n \leq |A_2|/4$, we can make them vertex-disjoint by finding them one by one, each time deleting from $A_2$ the vertices already used.

    \textbf{Case 3.} Now assume that $r = 2s$ and $|A_1 | = n/2 + g$ for some $g <0$. Our aim is to find a collection $\cC_g$ of $|g|$ vertex-disjoint copies of $K_{s+1}$ in $G^p[A_2]$, each of them having at least $\beta n/2$ common neighbours in $A_1$. Recall that $G^p[A_2]$ has minimum degree at least $|g|$.

    Let $Z = \{ v \in A_2: |A_1 \setminus N_G(v)| \geq \beta n/(2s)\}$. Note that $|Z|\leq 2s/\beta \cdot \gamma n^2$, since otherwise there are too many missing edges between $A_1$ and $A_2$. Denote $Z = \{v	_1, \ldots, v_\ell \}$. If $\ell \geq |g|$, then we may greedily find $|g|$ vertex-disjoint copies of $K_{s+1}$ using property (*), where each copy uses exactly one vertex from $Z$. To see this, note that at the point of processing a~vertex $z \in Z$, its number of neighbours in $G[A_2]$ outside of $Z$ and the previously chosen cliques is at least
    $$\frac{\beta n}{2s}- |Z|(s+1)\geq n\left(\frac{\beta}{2s} - \frac{2s(s+1)\gamma}{\beta} \right) \geq \frac{\beta n}{4s},$$ so (*) can indeed be applied.
    If $\ell < |g|$, we first apply Lemma~\ref{l:5.5chr} to the graph $G[A_2 \setminus Z]$ to obtain $|g|-\ell$ copies of $K_{s+1}$ disjoint from $Z$, call them $\cC$. Then we use the same greedy argument as in the case $\ell \geq g$  (noting that $|V(\cC)| \leq |g|(s+1) \leq \gamma (s+1)n\leq\beta n/(8s)$) to complete the collection $\cC$ to $\cC_g$.

    For each copy of $K_{s+1}$ using at most one vertex from $Z$, the number of neighbours in $A_1$ is at least $\beta n - s \cdot \beta n /(2s) > \beta n / 2$. Now, as in the previous case, we can use property (*) to extend $\cC_g$ to a $K_r$-factor in $G^p[A_1 \cup A_2]$, since the total number of vertices in $A_1$ needed for the extension is $|g|(s-1) \leq \beta n/(8s).$
    
\end{proof}
    
    Let $\cC^+$ be the resulting collection of $|g|\leq \gamma n$ vertex-disjoint copies of $K_r$.
	
	Let $A_i' = A_i \setminus V(\cC^+)$ for $i \in \{1, 2\}$, and $G' = G[A_1' \cup A_2']$. There are still at most $\gamma n^2$ missing edges (in $G'$) between $A_1'$ and $A_2'$, and, since $\gamma \ll \beta$, any vertex has at least $\frac{\beta n}{2}$ neighbours in the opposite part. In particular, $(A_1', A_2')$ is a $(\sqrt{\gamma}, 1, \beta/2)$-super-regular pair. 
	Note that $n' = \frac{|A_1' \cup A_2'|}{r}$ is an integer, and, since $g = |A_1| - \frac{sn}r$, we have
	\begin{align*}
		t|A_1'| - s|A_2'| & = t|A_1| - s|A_2| - t|g|(s + \sigma) + s|g|(t-\sigma)
		\\
		& = t \left(\frac{sn}{r} + g - |g|(s + \sigma) \right) -
		s \left(\frac{tn}{r} - g - |g|(t - \sigma) \right)
		= 0,
	\end{align*}
	so $|A_1'| = sn'$ and $|A_2'| = tn'$. Hence, we can apply Lemma~\ref{c:st-reg-pairs-factors} to find a $K_r$-factor in $G' \cup \Gnp[A_1' \cup A_2']$, as required.
\end{proof}

\section{The extremal vs. non-extremal case distinction}\label{sec:cases}

In this section we prove Lemma~\ref{l:4.3chr-simplified}, which distinguishes the two cases for the deterministic graph $G$.
    
\begin{proof}[Proof of Lemma~\ref{l:4.3chr-simplified}]
	Suppose that (b) does not hold and let $X\subset V(G)$ be such that $|X|=(1-\alpha)n$ and $e_G(X) < \gamma^2 n^2$. If (i), (ii), and (iii) hold with $A_1 = X$ then we are done. Hence suppose this is not the case and let $Y=V(G)\setminus X$. Let $U\subseteq X$ consist of all vertices $x$ for which $|Y\setminus N_G(x)| > 2\beta n$. Suppose that $|U| \geq \gamma n/8$. Then, since $\delta(G)\geq \alpha n$,  $|Y|=\alpha n$ and $\beta > 8 \gamma$, we get
	\begin{align*}
		e_G(X) \geq |U|\cdot \beta n \geq \frac{\beta\gamma n^2}{8} > \gamma^2 n^2,
	\end{align*}
	a contradiction. Hence 
	\begin{align*}
		\xi := \frac{|U|}{n} < \gamma/8.    
	\end{align*} 
	Set
	\begin{align*}
		X_1 := X \setminus U \ \ \text{ and } \ \ Y_1 := Y \cup U,
	\end{align*}
	with
	\begin{align*}
		|X_1| = (1-\alpha-\xi)n \ \ \text{ and } \ \ |Y_1| = (\alpha + \xi)n,
	\end{align*}
	and note that $X_1$ satisfies (i) as for any $x\in X_1$ we have
	\begin{align*}
		|Y_1 \setminus N_G(x)| \leq 2\beta n + |U| \leq 4\beta n.
	\end{align*}
	Next, let $W\subset Y_1$ consists of all vertices $x$ for which $|N_G(x) \cap X_1| < \beta n$. We will estimate the number of edges between $X_1$ and $Y_1$. On one hand, since $e_G(X_1) \leq e_G(X) \leq \gamma^2n^2$, we have
	\begin{align*}
		e_G(X_1, Y_1) \geq |X_1|\alpha n - 2\gamma^2 n^2 = (1-\alpha-\xi)\alpha n^2 - 2\gamma^2n^2,
	\end{align*}
	and on the other hand
	\begin{align*}
		e_G(X_1, Y_1) & \leq |W|\beta n + |Y_1\setminus W||X_1| = |W|(\beta n - |X_1|) + |X_1||Y_1| \\
		& = -|W|(1-\alpha-\xi-\beta)n + (1-\alpha-\xi)(\alpha+\xi)n^2.
	\end{align*}
	Hence,
	\begin{align*}
		|W| \leq \frac{1}{1-\alpha-\xi-\beta}\left((1-\alpha-\xi)\xi + 2\gamma^2 \right) n \leq \left(2\xi + \frac{\gamma}{4}\right)n \leq \frac{\gamma n}{2},
	\end{align*}
	using the fact that the function $\frac{1- \alpha - \xi}{1-\alpha - \xi -\beta}$ is increasing in $\xi$, and $ 32\xi < \beta< \frac {1}{5}(1-\alpha)$.
	Set
	\begin{align*}
		X_2 := X_1 \cup W \ \ \text{ and } \ \ Y_2 := Y_1 \setminus W.
	\end{align*}
	We claim that $X_2 \dcup Y_2$ is the desired partition for (a) with $X_2$ playing the role of $A_1$. First notice that 
	\begin{align*}
		\left(1 - \alpha - \gamma\right)n \leq |X_2| \leq (1-\alpha- \xi)n + \frac{\gamma n}{2} \leq (1-\alpha+\gamma)n.
	\end{align*}
	Next, it is easy to see that $Y_2$ satisfies (ii). Moreover, $X_2$ satisfies (i) as for vertices in $X_1$ shifting vertices from $W$ to $X_1$ cannot increase the number of non-neighbours on the other side, and for any $x\in W$ we have
	\begin{align*}
		|Y_2 \cap N_G(x)| \geq (\alpha - \beta)n -|W|,
	\end{align*}
	hence
	\begin{align*}
		|Y_2\setminus N_G(x)| \leq |Y_2| - (\alpha-\beta)n + |W| = |Y_1| - (\alpha-\beta)n = (\xi + \beta)n \leq 2\beta n.
	\end{align*}
	Finally, to show (iii) notice that since $e_G(X) < \gamma^2n^2$, there are at most $2\gamma^2 n^2$ missing edges between $X$ and $Y$. When we shift $U$ from $X$ to $Y$ the number of missing edges between two parts can increase by at most $|U||X\setminus U| = \xi(1-\alpha-\xi)n^2 \leq \gamma(1-\alpha)n^2/8$, and when we shift $W$ from $Y_1$ to $X_1$ we can gain at most $|W||Y_1\setminus W| \leq \gamma(\alpha+\xi-\gamma/2)n^2/2$ additional missing edges between two parts. Hence in total we get at most 
    \[2\gamma^2 n^2 + \frac{\gamma(1-\alpha)n^2}{8} + \frac{\gamma(2\alpha + 2\xi - \gamma)n^2}{4} \leq \gamma n^2 \left(\frac{1}{20}+ \frac18 + \frac12 \right)\leq \gamma n^2\] 
    missing edges between $X_2$ and $Y_2$.
\end{proof}


\section{Many disjoint cliques}
    \label{sec:larger-clques}

In this section we prove Lemma~\ref{l:5.5chr}. Recall that for specific constants $\beta$ and $C$ our task is to find a collection of $g$ vertex-disjoint copies of $K_{s+1}$ in a randomly perturbed graph $G^p = G \cup \Gnp$ with 
\[\delta(G) = g \leq \beta n \ \ \text{ and } \ \  p = Cp_s(n).\]
Throughout the proof we set $V = V(G)$.

\begin{proof}[Proof of Lemma~\ref{l:5.5chr}]
    For $s=2$, the conclusion follows from Theorem 1.4 in~\cite{bpss23}. (We remark that our argument also applies to this case, but the calculations would have to be redone for $s=2$).

	Let  $s \geq 3$ and choose constants $C$, $\beta$ and $\delta$ so that
    \[ \beta \ll \delta \ll C^{-2s^2}. \] 
    With high probability $\Gnp$ contains at least $\log^2 n$ vertex-disjoint copies of $K_{s+1}$ (which can be shown similarly as Lemma~\ref{l:ks-in-lin-sized}). Thus we may assume that $g \geq \log^2 n$. 
	
	We will first show that we may also assume that the maximum degree of our graph is bounded from above by $\delta n$.

	\begin{claim}           
		Assume that the conclusion holds in case  $\Delta(G) \leq \delta n.$ Then it holds in general.
	\end{claim}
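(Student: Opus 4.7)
The plan is to partition $V(G)$ by degree and treat the two regimes separately. Set $H=\{v\in V : \deg_G(v)>\delta n\}$ and $L=V\setminus H$. The minimum-degree vertex of $G$ has degree $g\leq\beta n<\delta n$ and hence lies in $L$, giving $\delta(G[L])\leq g\leq\beta n$; on the other hand, for any $v\in L$ one has $\deg_{G[L]}(v)\geq \deg_G(v)-|H|\geq g-|H|$, so $g_L:=\delta(G[L])\geq g-|H|$. By definition, $\Delta(G[L])\leq\delta n$. Throughout the argument I would condition on the Lemma~\ref{l:ks-in-lin-sized} event $(*)$ that every subset of $V$ of size at least $n/\log^2 n$ contains a copy of $K_s$ in $\Gnp$.

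If $|H|\geq g$, no appeal to the inductive hypothesis is needed. Pick distinct $v_1,\dots,v_g\in H$ and build the cliques greedily: when processing $v_i$, at most $g(s+1)\leq\beta(s+1)n\leq\delta n/2$ vertices are already used, so $N_G(v_i)$ retains at least $\delta n/2\geq n/\log^2 n$ free vertices; by $(*)$ this set contains a $K_s$ in $\Gnp$, which together with $v_i$ forms the desired $K_{s+1}$.

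If $|H|<g$, I would first apply the assumed (bounded-maximum-degree) case of the lemma to $G[L]$ together with $\Gnp[L]$. The hypotheses $\delta(G[L])\leq\beta n$ and $\Delta(G[L])\leq\delta n$ hold, and since $|L|\geq(1-\beta)n$ the probability $p=Cp_s(n)$ still dominates any required $C'p_s(|L|)$ provided $C$ is chosen large enough. This yields $g_L\geq g-|H|$ vertex-disjoint $K_{s+1}$'s inside $L$, leaving at most $g-g_L\leq|H|$ further copies to be constructed. These are produced by repeating the greedy step using fresh vertices of $H$; since the cumulative number of used vertices across both phases never exceeds $g(s+1)\leq\delta n/2$, the neighbourhood of each new $v\in H$ still contains at least $n/\log^2 n$ free vertices and $(*)$ applies.

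The only delicate point is the second case: verifying that the hypothesis of the claim can legitimately be invoked on the smaller graph $G[L]$ with the same randomness $\Gnp$, and that the greedy completion does not exhaust any neighbourhood. Both reduce to the inequality $g(s+1)\leq\delta n/2$, which is immediate from $\beta\ll\delta$. A union bound over the constantly many high-probability events invoked concludes the proof.
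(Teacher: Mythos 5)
Your argument is essentially the paper's: split the vertices by a degree cutoff, handle the high-degree vertices greedily via Lemma~\ref{l:ks-in-lin-sized} (each such vertex still has $\Omega(\delta n)$ unused neighbours since at most $(s+1)g\leq \delta n/2$ vertices are ever consumed), and invoke the assumed bounded-maximum-degree case on the remaining induced subgraph, whose minimum degree has dropped by at most $|H|$. The one imprecision is the normalisation of the cutoff: with $H=\{v:\deg_G(v)>\delta n\}$ you only get $\Delta(G[L])\leq \delta n$, which is \emph{larger} than $\delta|L|$ as soon as $H\neq\emptyset$ (and likewise you check $g_L\leq\beta n$ rather than $g_L\leq\beta|L|$), so the hypothesis of the bounded-degree case is not literally verified for $G[L]$; your remark that everything reduces to $g(s+1)\leq\delta n/2$ does not address this. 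The paper avoids the issue by cutting at degree $\delta n/2$: then $|V\setminus X_1|\geq n/2$ gives $\Delta(G-X_1)\leq \delta n/2\leq \delta n_*$ at once. Your version is repaired the same way (or by noting that $\delta/(1-\beta)\leq 2\delta$ still fits the hierarchy $\beta\ll\delta\ll C^{-2s^2}$), so this is a fixable constant slip rather than a conceptual gap.
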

	
	\begin{proof}
		Let	$X = \{ x \in V(G): d_G(x) > \delta n / 2\}$, and let $X_1 \subset X$ be a set satisfying $|X_1| = \min(|X|, g)$. Denote $g_1 = g - |X_1|$. We claim that the graph $G^p[V \setminus X_1]$ contains $g_1$ vertex-disjoint copies of $K_{s+1}$. Indeed, when $|X_1|=g$ then $g_1 = 0$ and there is nothing to show. Hence suppose that $|X_1| = |X| < g$. Then the graph $G - X_1$ satisfies the hypothesis of the Claim with $g_1$ instead of $g$. Indeed, since $|X_1| < g \leq \beta n$, we have $n_* := |V \setminus X_1| \geq n/2$. Moreover, the minimum degree of $G-X_1$ is at least $g -|X_1| = g_1$, and for the maximum degree of $G - X_1$ we have 
        \[\Delta(G-X_1) \leq \delta n / 2 \leq \delta n_*.\]
        Thus $G^p[V \setminus X_1]$ contains $g_1$ vertex-disjoint copies of $K_{s+1}$.
		
		Now, for every $x \in X_1$, we may greedily choose a further copy of $K_{s+1}$ containing $x$, 
        as follows. When processing $x$, at most $(s+1)g  \leq (s+1)\beta n \leq \delta n / 10$ vertices from $V \setminus X_1$ have already been used. Hence $N_G(x) \setminus X_1$ 
		has at least $\frac {\delta n}{2} - g - \frac{\delta n}{10} \geq \delta n/4$ unused vertices; by Lemma~\ref{l:ks-in-lin-sized} 
        this set contains a copy of $K_s$ in $\Gnp$, which forms the desired copy of $K_{s+1}$ together with $x$.
	\end{proof}
	
	\begin{claim}
		\label{c:beta-const}
		The conclusion holds if $g \geq n / (10s\log n)$.
	\end{claim}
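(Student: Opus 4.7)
The plan is to invoke the fractional Kahn--Kalai theorem (Theorem~\ref{thm:fractional-kk}) via the Heckel--Riordan coupling (Lemma~\ref{l:coupling}), and to produce the necessary spread measure on collections of vertex-disjoint rooted $K_{s+1}$'s. Since $\varphi(s)>2/s$ for $s\geq 3$, the probability $p=Cp_s$ satisfies $p\leq n^{-2/s+\eps}$, so Lemma~\ref{l:coupling} couples $\Gnp$ with a random $s$-uniform hypergraph $G^{(s)}_\pi$ of density $\pi\sim p^{\binom{s}{2}}=C^{s(s-1)/2}n^{-s^2/(s+2)}$. It is then enough to find $g$ vertex-disjoint \emph{flowers} $(v_j,S_j)$ with $v_j\in V$, $S_j$ a hyperedge of $G^{(s)}_\pi$, $S_j\subseteq N_G(v_j)$, and the sets $\{v_j\}\cup S_j$ pairwise disjoint. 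Let $\cH$ denote the family of all $g$-element collections of disjoint $s$-subsets of $V$ admitting such a system of distinct roots; by Theorem~\ref{thm:fractional-kk}, it suffices to exhibit a $q$-spread measure $\mu$ on $\cH$ with $q\log n\lesssim \pi$.

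I would define $\mu$ through a weighted greedy procedure. At step $j=1,\dots,g$, writing $V_{j-1}$ for the vertices used by the previous flowers, first sample a root $v_j\in V\setminus V_{j-1}$ with probability proportional to $\binom{|N_G(v_j)\setminus V_{j-1}|}{s}$, and then sample $S_j$ uniformly in $\binom{N_G(v_j)\setminus V_{j-1}}{s}$. The weight in the first step telescopes with the uniform choice in the second, so for every $s$-set $T$ disjoint from $V_{j-1}$,
\[
    \mathbb{P}[S_j=T\mid V_{j-1}]\;=\;\frac{|N_G(T)\setminus V_{j-1}|}{Z_{j-1}}\;\leq\;\frac{\delta n}{Z_{j-1}},\qquad Z_{j-1}:=\sum_{u\notin V_{j-1}}\binom{|N_G(u)\setminus V_{j-1}|}{s},
\]
using $\Delta(G)\leq \delta n$ from the preceding claim. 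The crucial input is a uniform lower bound $Z_{j-1}\gtrsim ng^s$: since $|V_{j-1}|\leq (s+1)g\leq (s+1)\beta n$ and the total number of $G$-edges between $V_{j-1}$ and its complement is at most $(s+1)\delta g n$, a Markov bound shows that all but $O(\delta n)$ of the remaining vertices retain at least $g/2$ unused neighbors, so $Z_{j-1}\gtrsim n\binom{g/2}{s}\gtrsim_s ng^s$.

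A union bound over all injections $\phi\colon[k]\to[g]$ then gives, for any disjoint family $T_1,\dots,T_k$ of $s$-sets,
\[
    \mu\bigl(\{A\in\cH:\{T_1,\dots,T_k\}\subseteq A\}\bigr)\;\leq\;g^k\cdot\bigl(O(\delta)/g^s\bigr)^k\;=\;\bigl(O(\delta)/g^{s-1}\bigr)^k,
\]
so $\mu$ is $q$-spread with $q=O(\delta/g^{s-1})$. To close the argument, one verifies $q\log n\lesssim \pi$; substituting $g\geq n/(10s\log n)$ this reduces to $n^{(s-2)/(s+2)}\gtrsim \delta\,\plog(n)/C^{s(s-1)/2}$, which holds for $s\geq 3$ and large $n$ thanks precisely to the exponent gain encoded in $\varphi(s)$. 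The main obstacle is maintaining the lower bound on $Z_{j-1}$ throughout the process: without both the reduction $\Delta(G)\leq \delta n$ and the lower bound $g\geq n/(10s\log n)$, too many vertices could lose essentially all their neighbors to $V_{j-1}$ and the measure would degenerate. The remaining bookkeeping exploits the hierarchy $\beta\ll\delta\ll C^{-2s^2}$ and is routine.
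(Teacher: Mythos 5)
Your argument is essentially sound but takes a genuinely different, and much heavier, route than the paper. The paper proves Claim~\ref{c:beta-const} by a short greedy argument: after reducing to bounded maximum degree, once $j\le g-1$ copies of $K_{s+1}$ have been removed, an edge count shows the remaining graph still has a vertex $x$ of degree at least $g/2\ge n/(20s\log n)\ge n/\log^2 n$, and Lemma~\ref{l:ks-in-lin-sized} then supplies an $s$-clique of $\Gnp$ inside the unused part of $N_G(x)$; no spread machinery is needed. Within your route, the key steps are correct: the telescoping bound $\PP[S_j=T\mid \cdot\,]\le \Delta(G)/Z_{j-1}$, the uniform lower bound $Z_{j-1}=\Omega_s(ng^s)$ obtained from $\Delta(G)\le\delta n$ and $|V_{j-1}|\le (s+1)g$, the resulting $q=O_s(\delta)/g^{s-1}$ spread on $\cH$ (the roots cause no trouble since membership in $\cH$ only quantifies over them existentially and your process produces them by construction), and the application of Theorem~\ref{thm:fractional-kk}.

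There is, however, one concrete error: the inequality $\varphi(s)>2/s$ is backwards. For $s\ge 3$ one has $2/s-\varphi(s)=\tfrac{2(s-2)}{s(s-1)(s+2)}>0$, which is precisely why $p_s$ exceeds $n^{-2/s}$ (as noted after the definition of $p_s$). Hence $p=Cp_s$ is polynomially \emph{larger} than $n^{-2/s}$, and Lemma~\ref{l:coupling}, stated only for $p\le n^{-2/s+\eps}$ with $\eps=\eps(s)$ sufficiently small, does not directly apply at $p=Cp_s$ unless one verifies that the fixed constant $\tfrac{2(s-2)}{s(s-1)(s+2)}$ lies below the admissible $\eps(s)$ of Heckel--Riordan, which you do not do and which the lemma as stated does not give you. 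Fortunately the gap is easy to close: the conclusion of Claim~\ref{c:beta-const} is monotone in the random graph, and your own spread computation only needs $\pi\gtrsim q\log n$ with $q\approx\delta g^{-(s-1)}$, so the whole argument already goes through at $p'=\Theta\bigl(n^{-2/s}\log n\bigr)$, where the coupling is legitimately applicable; the logarithms are absorbed because $\binom s2\ge s$ for $s\ge 3$. Since $p'\le Cp_s$ for large $n$, monotonicity then yields the claim at $p=Cp_s$. With that correction your proof is valid, at the cost of invoking the fractional Kahn--Kalai theorem and the clique coupling where the paper's two-line greedy suffices.
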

	
	\begin{proof}
		Let  $g \geq n / (10s\log n)$, and we may assume that $\Delta(G) \leq 10(s+1)g$.          
		Similarly to the argument above, we will greedily choose vertex-disjoint $K_{s+1}$-copies in $G \cup \Gnp$ as follows. Suppose that we have chosen $j \leq g-1$ vertex-disjoint copies of $K_{s+1}$ which cover a vertex set $J$ with $|J| = (s+1)j$. Assuming $j\leq g-1 \leq \frac{n}{100s(s+1)}$, the graph $G - J$ 
		has at least
		$$\frac{ng}{2} - \Delta(G) (s+1)j  \geq \frac{ng}{4}$$
		edges, and thus contains a vertex $x$ of degree at least $\frac{g}{2} \geq \frac{n}{20s \log n}$. By the conclusion of Lemma~\ref{l:ks-in-lin-sized}, the set $N_G(x) \setminus J $ contains an $s$-clique in $\Gnp$, which forms a desired copy of $K_{s+1}$ together with $x$. 
	\end{proof}
	
	To finish the proof, we show that the conclusion holds for $\delta(G) = g < n/(10s \log n)$ and $\Delta(G) \leq \delta n$. 
	Take a partition $A \dcup  B \dcup D$ such that $|A| = |B| = |D| = \frac n3$, 
	and for $v \in A$, $N_G(v) \cap B \geq \frac g5$; a random partition has this property with positive probability. 
	Let $G'\subset G$ be a bipartite subgraph on $A \dcup B$ in which each $v \in A$ has exactly $\frac g5$ neighbours in $B$, so $e(G') = \frac{gn}{15}$.
	
	\begin{claim} \label{c:cherries}
		$G'$ has at most $\delta g n^2$ cherries (that is, triples $uvw$ with $uv, vw \in E(G')$).
	\end{claim}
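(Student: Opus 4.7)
The plan is to count cherries by their central vertex and split the sum according to which side of the bipartition the center lies on. Since $G'$ is bipartite on $A \dcup B$, every cherry $u v w$ has its center $v$ either in $A$ or in $B$, and the total number of cherries equals $\sum_{v \in V(G')} \binom{d_{G'}(v)}{2}$.

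For cherries centered in $A$, I will use the fact that by construction every $v \in A$ has $d_{G'}(v) = g/5$, so
\[
\sum_{v \in A} \binom{d_{G'}(v)}{2} \leq \frac{n}{3} \cdot \frac{(g/5)^2}{2} = \frac{g^2 n}{150}.
\]
Using $g \leq \beta n \ll \delta n$, this is much smaller than $\delta g n^2$.

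For cherries centered in $B$, I will bound $\binom{d_{G'}(v)}{2} \leq \frac{1}{2} d_{G'}(v) \cdot \Delta(G')$ and use $\Delta(G') \leq \Delta(G) \leq \delta n$ (which holds in this regime of the proof) together with the edge count $\sum_{v \in B} d_{G'}(v) = e(G') = gn/15$. This gives
\[
\sum_{v \in B} \binom{d_{G'}(v)}{2} \leq \frac{\Delta(G')}{2} \sum_{v \in B} d_{G'}(v) \leq \frac{\delta n}{2} \cdot \frac{gn}{15} = \frac{\delta g n^2}{30}.
\]

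Adding the two contributions and using $\beta \ll \delta$ to absorb the $g^2 n / 150$ term yields a total of at most $\delta g n^2$ cherries, as claimed. The argument is essentially a convexity / maximum-degree bound and I do not foresee any obstacle; the only thing to double-check is that $\Delta(G') \leq \delta n$ is indeed available at this stage, which follows from the reduction made in the first claim of the proof.
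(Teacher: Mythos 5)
Your proposal is correct and follows essentially the same route as the paper: split cherries by whether the centre lies in $A$ or $B$, bound the $A$-centred ones via $|A|=n/3$ and the exact degree $g/5$, and bound the $B$-centred ones via the edge count $e(G')=gn/15$ together with $\Delta(G')\leq\Delta(G)\leq\delta n$, which is indeed available at this stage of the proof. The only difference is cosmetic (you sum $\binom{d(v)}{2}$ over centres rather than enumerating $u$, then $v$, then $w$), and the resulting bounds match.
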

	
	\begin{proof}
		There are at most $\frac n3 \left(\frac g5 \right)^2 \leq \frac{1}{75} \beta gn^2 \leq \frac{1}{75} \delta gn^2$ cherries with a centre in $A$. Furthermore, for cherries $uvw$ with a centre $v \in B$,  there are at most $\frac n3$ choices for $u \in A$, $\frac g5$ further choices for its neighbour $v$, and $\Delta(G')\leq \delta n$ choices for $w$, which yields a total upper bound of $\delta g n^2$.
	\end{proof}
	
	We will now construct an auxiliary graph $(W, F)$ consisting of potential copies of $K_{s+1}$ intersecting $A \cup B$ in a single edge of $G'$, that is,
	$$W=\left\{K \in {V \choose s+1}: K \setminus D \in E(G') \right\} \text{ \quad and \quad }
	F = \left\{KL \in {W \choose 2}: K \cap L \neq \emptyset\right\}.$$
	We have $|W| = \kappa g n^s$, where the constant $\kappa$ depends only on $s$. 
	Let $(\tilde W, \tilde F)$ be the random induced subgraph of $(W, F)$ defined by
	$$\tilde W = \left\{ K \in W: {K\choose 2}\setminus E(G') \subset \Gnp \right\}.$$
	We will show that w.h.p.
	\begin{equation}
		\label{eq:W-vert-edge}
		|\tilde W| \geq 15g  \text{\quad and \quad}  \left| \tilde F \right | \leq 3g,
	\end{equation}
	so we may delete one element of each pair in $\tilde F$ and obtain $10g$ independent vertices of $\tilde W$, corresponding to $10g$ vertex-disjoint copies of $K_{s+1}$ in $G' \cup \Gnp$.
	
	Using the fact that 
	\begin{equation} \label{eq:nps}
		n^sp_s^{\binom {s+1}{2}-1} = n^{s-\frac{2s}{(s-1)(s+2)}\cdot\frac{s^2+s-2}{2}} = 1,
	\end{equation}
	and denoting $C' = C^{\binom {s+1}{2}-1}$, we have
	\begin{equation}
		\E |\tilde W| = |W|p^{\binom {s+1}{2}-1} = \kappa g C' \geq 30g,
	\end{equation}
    provided $C=C(s)$ is large enough. 
    
    We will now show that w.h.p.
	\begin{equation} \label{eq:e-tilde-f}
		\E |\tilde F| \leq 2g,
	\end{equation}
	and later we will argue that this also suffices for an application of Janson's 
	Inequality to $\tilde W$.
	Let $F_{i,j} \subset F$, $1 \leq i \leq s$ and $0 \leq j \leq 2$, be the set of pairs $KL \in {W\choose 2}$ with $|K \cap L|=i$ and $|(K \cap L) \setminus D| = j$. Then
    \begin{align}\label{eq:e-F_ij}
        \E |F_{i,0}[\tilde W]| & \leq (gn)^2 \cdot n^{2s-i-2} \cdot p^{2{s+1\choose 2} - 2 - {i\choose 2}} \leq (C')^2 g^2 n^{-i}p^{-{i\choose 2}} \leq (C')^2 \delta g n^{1-i} p^{-{i \choose 2}}, \\
        \E |F_{i,1}[\tilde W]| & \leq \delta gn^2 \cdot n^{2s-i-1} \cdot p^{2{s+1\choose 2} - 2 - {i\choose 2}} \leq (C')^2 \delta g n^{1-i} p^{-{i \choose 2}}, \\
        \E |F_{i,2}[\tilde W]| & \leq gn \cdot n^{2s-i} \cdot p^{2{s+1\choose 2} - 1 - {i\choose 2}} \leq (C')^2 gn^{1-i} p^{1-{i\choose 2}},
    \end{align}
    where the middle inequality (case $j=1$) is obtained using Claim~\ref{c:cherries}.
    Putting 
    \[F_1 = F_{1,0} \cup F_{1,1}\]
    and
    \[F_{2} = \left(\bigcup_{2\leq i \leq s} F_{i,0} \cup F_{i,1} \cup F_{i,2}\right)\]
    we get
    \[ \E|\tilde F_1| = \E|F_1[\tilde W]| \leq g \text{\quad and \quad}  \E|\tilde F_{2}| = \E|F_{2}[\tilde W]| = o(g),\]
    provided $\delta$ is sufficiently small with respect to $C^{-1}$. Therefore~\eqref{eq:e-tilde-f} holds. 
	
	To apply Janson's inequality to $\tilde W$, note that
	$$\Delta := \frac12 \sum_{KL \in F, K\cap L \neq \emptyset} \pr{K \in \tilde W \land L \in \tilde W} \leq \E|\tilde F| \leq 2g.$$
	Recalling that $\E |\tilde W| \geq 30g$ for sufficiently large $C = C(s)$, by Janson's inequality we get that w.h.p.
	\begin{equation}
		|\tilde W| \geq 15 g,
	\end{equation} 
	which covers the first part of~\eqref{eq:W-vert-edge}. 
 
    To prove the second part, it is enough to show that w.h.p.\,we also have $|\tilde F_1| \leq 2g$. To this end we estimate $\Var{|\tilde F_1|}$. Let
    \[ I_{KL} = \begin{cases}
        1 & \text{if } KL \in \tilde F, \\
        0 & \text{otherwise.}
    \end{cases}\]
    Then
    \[ |\tilde F_1| = \sum_{|K\cap L| = 1} I_{KL}\]
    and
    \begin{align}\label{eq:var-F-tilde}
        \Var{|\tilde F_1|} & = \sum_{|K\cap L| = 1} \sum_{|K'\cap L'| = 1} \Cov{I_{KL}, I_{K'L'}. }
    \end{align}
    For pairs $KL$, $K'L'$ in $\tilde F$ which do not share a random edge, $\Cov{I_{KL}, I_{K'L'}} = 0$. Hence each non-zero term in the sum in~\eqref{eq:var-F-tilde} corresponds to pairs $KL$, $K'L'$ having at least 2 vertices in common. Moreover, similarly to \eqref{eq:e-F_ij}, this expression is dominated by quadruples with the number of common vertices as small as possible, hence 2, sharing exactly one random edge and no deterministic edges. Thus
    \begin{align*}
       \Var{|\tilde F_1|} & \leq (1 + o(1)) gn^4 n^{4(s-1)-1}p^{4{s+1\choose 2}-5} = (1+o(1))g(np)^{-1} = o(g)
    \end{align*}
    By Chebyshev's inequality, w.h.p.\,$|\tilde F_1| \leq 2g$ and thus also $|\tilde F| \leq 3g$, which finishes the proof.
\end{proof}


\section{Factors in regular partitions}
\label{sec:spread-applications}

In this section we apply the powerful result from~\cite{fknp21}, which confirmed the fractional version of the famous Kahn--Kalai Conjecture. See Section~\ref{sec:prelim-kk} for the relevant definitions and theorems.

\subsection{Covering small vertex sets}
The following lemma will be important in the proof of Lemma~\ref{l:non-extremal}. Its proof is a simple application of Theorem~\ref{thm:fractional-kk}, although there probably are other arguments.
Before we proceed, let us define the graph $B_{m,s,t}$ to be the complete $(m+1)$-partite graph with $m$ classes of size $s$, and one class of size $t\leq s$. In particular, $|V(B_{m,s,t})| = ms + t = r$. Moreover, for a graph $G$, a vertex $x\in V(G)$ and a subset $Y \subset V(G)$, let $\cB_x$ be a collection of $B_{m,s,t}$-copies in $G$ containing $x$ in a class of size $t$ and let $\cB_x[V(G) \setminus Y]$ denote the members of $\cB_x$ disjoint from $Y$.

\begin{lemma}
	\label{l:x-cover}
	Let $\eps, \nu \in (0,1/2)$, $G$ be an $n$-vertex graph, and $X \subset V(G)$ with $\chi := |X| \leq \eps n$. Assume that for all $x \in X$ and $Y \subset V(G) \setminus \{ x\}$ of size at most $2r \eps n$, $|\cB_x[V(G) \setminus Y]| \geq \nu n^{ms+t-1}$. For sufficiently large $C=C(\nu)$, and $p = Cn^{-2/s} (\log n)^{2/(s(s-1))}$, the graph $G^p = G \cup \Gnp$ contains a family $\cF = \{\tilde F_x: x \in X\}$ of vertex-disjoint copies of $K_r$ such that for each $x\in X$, $ \tilde F_x$ contains an element of $\cB_x$.  (In particular, $x$ is covered by a partial $K_r$-factor $\cF$.) 
\end{lemma}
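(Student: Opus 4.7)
We plan to apply the Fractional Kahn--Kalai theorem (Theorem~\ref{thm:fractional-kk}) in the setting of $s$-uniform hypergraphs, and then transfer the conclusion to $\Gnp$ via the coupling Lemma~\ref{l:coupling}. Let $Z = \binom{V(G)}{s}$, and define $\cH \subseteq 2^Z$ as the collection of $s$-graphs $A$ on $V(G)$ for which there exist pairwise vertex-disjoint copies $B_x \in \cB_x$ (for $x \in X$) such that every size-$s$ class of each $B_x$ is a hyperedge of $A$, and the size-$t$ class of each $B_x$ is contained in some hyperedge of $A$. Given such an $A$ under a coupling in which every hyperedge of $A$ is an $s$-clique of $\Gnp$, each $B_x$ extends to a $K_r$ on $V(B_x)$ in $G \cup \Gnp$: the between-class edges are supplied by $G$, each $s$-class $K_s$ is provided by the corresponding hyperedge of $A$, and the $t$-class $K_t$ appears as a sub-clique of the $K_s$ covering the $t$-class.

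We construct a probability measure $\mu$ on $\cH$ by random greedy sampling. Enumerate $X = \{x_1, \ldots, x_\chi\}$, and at step $i$ let $Y_i = \bigcup_{j < i} V(B_j) \cup (X \setminus \{x_i\})$, so $|Y_i| \leq (r+1)\chi \leq 2r\eps n$ and the hypothesis guarantees $|\cB_{x_i}[V(G) \setminus Y_i]| \geq \nu n^{r-1}$. Sample $B_i$ uniformly from $\cB_{x_i}[V(G) \setminus Y_i]$, and additionally sample an $s$-superset $T_i^{\star}$ of the size-$t$ class $T_i^0$ of $B_i$ uniformly among the $\binom{n-t}{s-t}$ options in $\binom{V(G)\setminus T_i^0}{s-t}$. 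The measure outputs the $s$-graph $A = \{T_i^1, \ldots, T_i^m, T_i^{\star} : i \in [\chi]\}$, where $T_i^1, \ldots, T_i^m$ are the size-$s$ classes of $B_i$.

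The heart of the argument is to verify that $\mu$ is $q$-spread with $q = C_0 n^{-(s-1)}$ for some $C_0 = C_0(\nu, s, r)$. For arbitrary $S \subset Z$, we bound $\mu(\{A : A \supseteq S\})$ by a union over assignments $\sigma \colon S \to [\chi] \times \{1, \ldots, m, \star\}$ recording which slot of which $B_i$ produces each $e \in S$. Conditionally on the prior $B_j$'s, the probability that a prescribed $s$-set $e$ coincides with some $T_i^j$ ($j \in [m]$) is $O(n^{-s}/\nu)$ (it fixes $s$ of the $r$ vertices of $B_i$), while the probability that $e = T_i^\star$ is $O(n^{-(t-1)}/\nu) \cdot \Theta(n^{-(s-t)}) = O(n^{-(s-1)}/\nu)$, factored through the event $T_i^0 \subset e$ (which forces $x_i \in e$ and fixes $t-1$ vertices of $B_i$) together with the uniform choice of the extension. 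The main obstacle is summing over $\sigma$: the extension-slot contribution is tight at $\Theta(q)$ per edge, so one must carefully use the constraint that distinct main slots of a single $B_i$ correspond to pairwise vertex-disjoint $s$-sets, along with the bound $\chi \leq \eps n$, to control the combinatorial factor coming from the choice of support of $\sigma$, in the spirit of the calculations in~\cite{psss22}.

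Once the spread bound is in hand, Theorem~\ref{thm:fractional-kk} implies that a random $s$-graph on $V(G)$ with edge probability $\pi = C' q \log|Z| = O(n^{-(s-1)} \log n)$ contains an element of $\cH$ with high probability. Taking $C$ large enough in $p = Cn^{-2/s}(\log n)^{2/(s(s-1))}$ so that $p^{\binom{s}{2}} = C^{\binom{s}{2}} n^{-(s-1)} \log n \geq \pi$, Lemma~\ref{l:coupling} couples $\Gnp$ with a random $s$-graph of edge probability at least $\pi$ whose hyperedges are $s$-cliques of $\Gnp$. Hence $G \cup \Gnp$ contains the required family $\cF$ with high probability.
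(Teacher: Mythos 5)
Your strategy is essentially the paper's: both proofs build a spread measure on a family of $s$-uniform hypergraphs by greedily sampling, for each $x\in X$, a uniformly random member of $\cB_x$ avoiding everything chosen so far, then invoke Theorem~\ref{thm:fractional-kk} and transfer the conclusion to $\Gnp$ via Lemma~\ref{l:coupling}. You differ in two technical devices. First, you keep the size-$t$ class and cover it by a uniformly random $s$-superset, whereas the paper reduces to $t=s$ at the outset by adding $n$ auxiliary universal vertices and working with $B_{m,s,s}$-copies, so that all slots behave identically. Second, you process $X$ in a fixed order, whereas the paper processes it in a uniformly random order -- exactly the device that symmetrises the spread computation (the factor $1/(\chi(\chi-1)\cdots)$ in the configuration probability cancels the $\chi^{|\mathcal{J}|}$ from the union bound) and removes the special r\^ole of $s$-sets meeting $X$, which is the issue you flag as ``the main obstacle''. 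Your version does go through, and more easily than your closing remark suggests: since each $B_i$ avoids $Y_i\supseteq X\setminus\{x_i\}$ and $x_i$ sits in the $t$-class, an $s$-set $e\in S$ disjoint from $X$ can only be a main slot, with at most $m\chi\le m\eps n$ candidate slots each of conditional probability $O(n^{-s}/\nu)$, giving per-edge contribution $O(\eps n^{-(s-1)}/\nu)$; while an $e$ meeting $X$ can only be a $\star$-slot, and only for the at most $|e\cap X|\le s$ indices $i$ with $x_i\in e$, each of conditional probability $O(n^{-(s-1)}/\nu)$ as you computed. For any fixed assignment $\sigma$, sequential conditioning (using $|\cB_{x_i}[V(G)\setminus Y_i]|\ge \nu n^{r-1}$, which holds for every history since $|Y_i|\le (r+1)\eps n\le 2r\eps n$) bounds the probability by the product of these per-slot weights, with one factor $\nu^{-1}$ per used index, harmless since $|\mathcal{J}|\le |S|$; the sum over $\sigma$ then factorises over the edges of $S$ and yields an $O_{r,s}(\nu^{-1})n^{-(s-1)}$-spread. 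So the decisive constraint is $x_i\in e$ for $\star$-slots (not the disjointness of main slots within one $B_i$, which only matters for showing incompatible prescriptions have probability zero), together with $\chi\le\eps n$ for main slots. One last minor point: for $s=2$ no coupling is needed, as the ground set is already a graph; the paper notes this case separately.
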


\begin{proof}
	
	We assume that $t=s$, as otherwise we may add an auxiliary set of $n$ vertices $W$ fully connected to vertices in $G$, and consider $B_{m,s,t+(s-t)}$-copies with $s-t$ vertices in $W$. 
	
	Our ground set will be $Z=K_n^{(s)}$, the complete $s$-uniform hypergraph. A hypergraph $H \subset Z$ is called \textit{useful} if $|H |=(m+1)|X|$ and there is a collection of vertex disjoint graphs $\{  F_x: x \in X\}$ with $ F_x$ in $\cB_x$ such that $\partial  H \cup  \bigcup_{x \in X} {F}_x$ is the desired collection of vertex-disjoint $K_r$-copies. Let $\cH$ be the collection of useful hypergraphs. For a graph $F$ isomorphic to $B_{m,s,s}$, we write $\kappa_s(F)$ for the $s$-uniform hypergraph consisting of $m+1$ independent sets of size $s$ in $F$. Recall that $Z_\pi$ is the random subhypergraph of $Z$ where each $s$-edge is included independently with probability $\pi$. 
	
	\begin{claim}
		With high probability, $Z_\pi$ with $\pi= Cn^{-(s-1)}\log n $ contains an element of $\cH$.
	\end{claim}
	
	\begin{proof}
		We will construct an $O(n^{-s+1})$-spread measure $\mu$ on $\cH$. Let $Y_0 = \emptyset$. For steps $j=1, \ldots, \chi$, we choose a random element $x_j$ of $X \setminus Y_{j-1}$, and a random element $ F_j$ of $\cB_{x_j}[V(G) \setminus Y_{j-1}]$.  Then we set $Y_j$ to be $Y_{j-1} \cup V(F_j)$.  
        (In this case, we say that $V(F_j)$ or its subset is \textit{selected} in step $j$.)  After $\chi$ steps, we obtain a random element $H_\mu = \bigcup_{j \in [\chi]} \kappa_s(F_j)$ of $\cH$, and $\mu$ is the probability measure on $\cH$ defined by this process.
        To see that the hypergraph $H_\mu$ is indeed in $\cH$ (i.e., that it is useful), note that $\partial  H_\mu \cup  \bigcup_{j \in [\chi]} {F}_j$ is a partial $K_r$-factor covering~$X$. (We remark that the randomisation over $X$ is done  to make the following spread computation more symmetric by avoiding the special r\^{o}le of $s$-sets intersecting $x$.)
		
		To verify that $\mu$ is a spread measure, consider $S \subseteq Z = K_n^{(s)}$. To bound the probability that $H_\mu  =  \bigcup_{j \in [\chi]} \kappa_s( {F}_j)$ contains $S$, suppose that this is the case, so for each element $e \in S$, there is an index $j =j(e)$ such that $\kappa_s(F_j)$ contains $e$, and let $\mathcal J$ be the set of all  indices $j(e)$ for $e \in S$. (We remark that $|\mathcal{J}| \leq |S|$, but the inequality is strict if $j(e) = j(e')$ for some distinct $e, e' \in S$.) The number of potential choices for the indices $j(e)$ and the configuration $(x_j, F_j): j \in \mathcal{J}$ is at most $\chi^{|\mathcal{J}|}n^{r|\mathcal{J}|-s|S|}$. 
		On the other hand, each such configuration $(x_j, F_j): j \in \mathcal{J}$ occurs with probability at most $$\frac {1}{\chi(\chi-1) \dots (\chi-|\mathcal{J}|+1)}\cdot \left(\nu n^{r-1} \right)^{-|\mathcal{J}|} \leq \left(\frac{\nu}{2e} \cdot \chi n^{r-1}\right)^{-|\mathcal{J}|},$$
		where the first term stands for the probability that the elements of $X$ are placed `correctly', and the second term comes from $|\cB_{x_j} \setminus Y_{j-1}| \geq \nu n^{r-1}$ for all $j$.
		
		Putting these bounds together, we get that the random hypergraph $H_\mu$ contains $S$ with probability at most 
            $$\chi^{|\mathcal{J}|}n^{r|\mathcal{J}|-s|S|} \left(\frac{\nu}{2e} \cdot \chi n^{r-1}\right)^{-|\mathcal{J}|} \leq A^{|\mathcal {J}|} n^{|\mathcal{J}|-s|S|} \leq (An^{-s+1})|S|,$$
		where $A=2e/\nu$, as required.

		Now, Theorem~\ref{thm:fractional-kk} implies that if $\pi = C'n^{-s+1}\log n$ for large $C' = C'(\nu)$, $Z_{\pi}$ contains a~member of $\Hc$ with high probability. 
	\end{proof}
	
	For $s=2$, this completes the proof. For $s \geq 3$, Lemma~\ref{l:coupling} implies that, with high probability, a random graph $\Gnp$ with $ p = 2\pi^{2/(s(s-1))} \leq Cn^{-2/s} (\log n)^{-2/(s(s-1))}$ contains $\partial H$ for some $H \in \Hc$. By definition of $\Hc$, this graph $\partial H$ along with the edges of $G$ gives the desired partial $K_r$-factor $\cF$.

\end{proof}

\subsection{$K_r$-factors in randomly perturbed regular pairs}
We now prove the main result of this section. Our application closely follows the elegant proof of Theorem 1.9 from~\cite{psss22}. Our proof significantly differs from theirs in the fact that we are \textit{augmenting} the underlying graph, rather than subsampling as is done in~\cite{psss22}.

Recall that a pair of sets $(V_1, V_2)$ in a graph $G$ is $(\eps, d, \vartheta)$-super-regular if it is $(\eps, d)$-regular, and for $i \neq j$, every vertex $v \in V_i$ has at least $\vartheta|V_j|$ neighbours in $V_j$. Note that if this is the case with $d \geq \vartheta \geq 4\sqrt{\eps}$, then $G[V_1, V_2]$ contains a spanning subgraph $G'$ for which $G'[V_1, V_2]$ is $(4\sqrt {\eps}, \vartheta, \vartheta - 4\sqrt{\eps})$-super-regular (see~\cite[Lemma 2.12]{abcd22}). As usual, we refer to an $(\eps, d, d-\eps)$-super-regular pair as $(\eps, d)$-super-regular.

\begin{lemma}
	\label{l:regular-pairs-factors}
	For integers $m, s, r$  and constants $ \vartheta \gg \eps$, there exists a constant $C$ such that the following holds. Let $G$ be an $nr$-vertex graph and $(V_{ij})$ be a collection of $n$-vertex pairwise disjoint subsets of $G$, with $i \in [m]$ and $j \leq s^*(i) \leq s$. Suppose that for $i \neq i'$, each $(V_{ij}, V_{i'j'})$ is an $(\eps, d,  \vartheta)$-super-regular pair in $G$ with some $d \geq \vartheta$. For $r = s^*(1)+\dots+s^*(m)$ and $p = C{n^{-2/s}}\left(\log n \right)^{2/(s(s-1))}$, w.h.p.\,the graph $G \cup G_{nr, p}$ contains a $K_r$-factor.
\end{lemma}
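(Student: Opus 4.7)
The plan is to follow the spread-measure strategy from the proof of Theorem~1.9 in~\cite{psss22}, already deployed in Lemma~\ref{l:x-cover}, adapting it from the sub-sampling setting of that paper to the present perturbed one. By an augmentation trick mirroring the one used at the start of Lemma~\ref{l:x-cover} (adjoin $s - s^*(i)$ auxiliary parts of size $n$ to each group $i$, fully joined by deterministic edges to the rest of $V(G)$), we may assume $s^*(i) = s$ for every $i$, so that $r = ms$. The ground set is then $Z = \binom{V(G)}{s}$, and an $s$-uniform hypergraph $H \subseteq Z$ is called \emph{useful} if it decomposes into $mn$ pairwise vertex-disjoint $s$-cliques, each contained in some $\bigcup_{j \le s} V_{i,j}$ and meeting every $V_{i,j}$ exactly once, and if these cliques can be matched via the cross-group deterministic edges of $G$ into $n$ vertex-disjoint copies of $K_r$. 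Once we produce an $O(n^{-s+1})$-spread probability measure $\mu$ on the family $\Hc$ of useful hypergraphs, Theorem~\ref{thm:fractional-kk} yields an element of $\Hc$ inside $\Gnp^{(s)}$ for $\pi = O(n^{-s+1}\log n)$, and Lemma~\ref{l:coupling} converts this into a $K_r$-factor in $G \cup \Gnp$ with $p = O(n^{-2/s}\log^{2/(s(s-1))}n)$.

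To construct $\mu$, I would iteratively select $K_r$-copies. At step $k \le (1-\eps)n$, let $V'_{i,j} \subseteq V_{i,j}$ denote the uncovered subset, and let $\ccR_k$ be the set of uncovered vertices $v \in V'_{i,j}$ with $|N_G(v) \cap V'_{i',j'}| \ge \tfrac{\vartheta}{2}|V'_{i',j'}|$ for every $i' \ne i$ and every $j' \le s$. By Lemma~\ref{l:slicing}, all cross-group pairs among the $V'_{i',j'}$ remain $(2\eps, d, \vartheta - o(1))$-super-regular, so a standard regularity counting argument guarantees that every $v \in \ccR_k$ admits $\Omega_\vartheta(n^{r-1})$ ordered extensions to an $r$-tuple $(w_{i',j'})$ with $w_{i,j} = v$, $w_{i',j'} \in V'_{i',j'}$, and all cross-group pairs adjacent in $G$. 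One step of $\mu$ samples a uniformly random $v \in \ccR_k$, then a uniformly random such extension, and records the $m$ within-group $s$-tuples (one per group) as hyperedges of $H_\mu$. After $(1-\eps)n$ steps, the residue consists of $\eps r n$ vertices, and a single application of Lemma~\ref{l:x-cover} — whose hypotheses are inherited from the super-regularity of the residual graph — covers it by an additional partial $K_r$-factor in $G \cup \Gnp$.

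The spread check follows the template of Lemma~\ref{l:x-cover}: for a fixed $S \subseteq Z$, tag each $e \in S$ by the step $k(e)$ at which it is added, let $\mathcal{J} = \{k(e) : e \in S\}$, and count configurations consisting of a choice of (vertex, extension) at each $k \in \mathcal{J}$ that is consistent with $S$. Using $|\ccR_k| = \Theta(n)$ and the lower bound $\Omega(n^{r-1})$ on extensions per step, one obtains $\mu(\{H \supseteq S\}) \le (An^{-s+1})^{|S|}$ for a constant $A = A(\vartheta, r)$, which is the desired spread bound. The main obstacle is preserving super-regularity throughout the iterative removal, in the sense of maintaining $|\ccR_k| \ge (1 - o(1))|V'_{i,j}|$ for every remaining part at every step; a priori, many vertices might lose a disproportionate fraction of their cross-group neighbours as the iteration proceeds. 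This is handled by a standard concentration argument (each step affects any fixed vertex's degree into any fixed part by at most one), discarding the negligible number of \emph{bad} vertices that drop out of some $\ccR_k$ and later absorbing them together with the residue via Lemma~\ref{l:x-cover}.
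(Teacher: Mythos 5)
Your overall framework (reducing to $s^*(i)=s$, taking an $s$-uniform ground set, defining a family of ``useful'' hypergraphs whose shadow plus the deterministic cross-group edges yields the factor, and then combining Theorem~\ref{thm:fractional-kk} with Lemma~\ref{l:coupling}) is exactly the paper's. The gap is in the construction of the spread measure itself: your iterative process only produces a spread measure on \emph{almost}-perfect factors, and the proposed completion step does not work. Lemma~\ref{l:x-cover} covers a set $X$ by $K_r$-copies each of which uses $r-1$ further vertices \emph{outside} $X$; here $X$ would be the entire residue, i.e.\ the set of \emph{all} uncovered vertices, so those extra vertices necessarily collide with the partial factor already built. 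If instead you try to force the covering copies to lie inside the residue, you are asking for a perfect $K_r$-factor of the leftover graph, which is precisely the original problem: Lemma~\ref{l:x-cover} requires $|X|$ to be a small fraction of its host and many $B_{m,s,t}$-copies surviving the deletion of any $2r\eps n$ vertices, and the residue (which within groups may span no deterministic edges at all, and whose structure depends on the random outcome) satisfies neither. Producing a spread distribution supported on \emph{perfect} factors is the genuinely hard point, and a vertex-by-vertex greedy randomisation cannot reach it; this is why the paper instead builds the measure by induction on $s$ (Lemma~\ref{l:spread}), sampling perfect matchings between $V_{i,s-1}$ and $V_{i,s}$ via the $\mu_C$-distribution of Lemma~\ref{l:two-spread}, contracting them while preserving super-regularity (Lemma~\ref{l:collapse-regular}), and anchoring extendability in the Blow-up Lemma~\ref{l:blow-up} at the base case $s=1$.

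A secondary problem is the maintenance of super-regularity and minimum degree along your $(1-\eps)n$ steps. The per-step removals are not uniform over a part (the non-root vertices of the extension are biased towards well-connected configurations), so ``each step changes a degree by at most one'' does not by itself give that every surviving vertex keeps a $\Theta(\vartheta)$-fraction of neighbours in every surviving part; moreover, once only an $\eps$-fraction of each part remains, Lemma~\ref{l:slicing} applied with the original regularity parameter $\eps$ is vacuous, so both $|\ccR_k|=\Theta(n)$ and the $\Omega_\vartheta(n^{r-1})$ extension count are unjustified in the late stages. Discarding the ``bad'' vertices only enlarges the residue and feeds back into the completion problem above. So while your spread calculation for the partial factor is plausible and mirrors the computation in Lemma~\ref{l:x-cover}, the argument as proposed does not prove the lemma; the missing ingredient is the matching-contraction induction of the paper (following~\cite{psss22}), which is what turns an almost-perfect scheme into a spread measure on genuine $K_r$-factors.
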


We first show how the above lemma implies Lemma~\ref{c:st-reg-pairs-factors}, which is a similar, but simpler statement. 

\begin{proof}[Proof of~\ref{c:st-reg-pairs-factors}]
	For $i\in[m]$, partition $V_i$ uniformly at random into $s$ parts $V_{i,1} \dcup \dots \dcup V_{i,s}$ of size $\frac{n}{r}$ each. 
 Similarly, partition $V_{m+1}$  uniformly at random into $t$ parts $V_{m+1,1} \dcup \dots \dcup V_{m+1,t}$ of size $\frac{n}{r}$.  Assume that for $i \neq i'$ and all appropriate $j, j'$, each vertex from $V_{i,j}$ has at least $\frac{\vartheta |V_{i',j'}|}{2}$ neighbours in $V_{i',j'}$ and vice versa -- using Chernoff-type bounds for the hypergeometric distribution (see, e.g.~\cite{skala2013hypergeometric}), this happens with high probability. This implies that each pair $(V_{i,j}, V_{i',j'})$ is $(2r\eps, d/2, \vartheta/2)$-super-regular. Hence, we can apply Lemma~\ref{l:regular-pairs-factors} with $n, m$ equal $n/r$, $m+1$, respectively, and with $s^*(1)=\dots=s^*(m-1)=s$, $s^*(m)=t$, to find a $K_r$-factor in $G \cup \Gnp$, as required.
\end{proof}

In the remainder of this section, we will prove Lemma~\ref{l:regular-pairs-factors}.	
We will apply Theorem~\ref{thm:fractional-kk} in a~more general setting, which is needed for the inductive construction of an $O(n^{-s+1})$-spread measure. Let $(V_{ij})$ be a collection of $n$-vertex pairwise-disjoint subsets of a graph $\tG$, with $i \in [m], j \in [s]$. Suppose that for $(i, j) \neq (i', j')$, $(V_{ij}, V_{i'j'})$ is an $(\eps, d, \vartheta)$-super-regular pair in $\tG$.			
Let 
\[\tG_i = \bigcup_{j < j'} \tG[V_{ij}, V_{ij'}].\]

\begin{remark}
	We emphasise that the bipartite graphs $\tG[V_{ij}, V_{i'j'}]$ play a different r{\^o}le when $i = i'$ compared to $i \neq i'$. Namely, for $i=i'$, they are a part of the ground set which will be subsampled with probability $p$, whereas for $i \neq i'$ they are a part of the fixed deterministic graph $G$. In our application, $\tG[V_{ij}, V_{ij'}]$ will actually be complete bipartite graphs for $j \neq j'$.
\end{remark}

Recall that $\tG_i^{(s)}$ denotes the collection of $K_s$-copies in $\tG_i$. Now, let the ground set be 
\[Z = \tG_1^{(s)}\dcup \dots \dcup \tG_m^{(s)}.\] 
For $i \in [m]$, let $H_i$ be a perfect $s$-uniform matching in $\tG_i^{(s)}$ (corresponding to the set of $s$-cliques of a $K_s$-factor in $\tG_i$). We say that the set $H=H_1 \dcup \dots \dcup H_m$ is \emph{extendable} if there is a $K_r$-factor in $\tG$ which contains $\partial H$. 
Let 
\[\cH \subset 2^Z \text{ be the collection of extendable sets}.\] 

Note that $\cH$ may a priori be empty (although the Blow-up Lemma, or the argument of~\cite{psss22} implies that this is not the case, assuming $d \gg \eps$). However,  we will construct an $O(n^{-s+1})$-spread measure on $\cH$, which immediately implies that $|\cH| = \Omega(n^{s-1})$. 
This, combined with Theorem~\ref{thm:fractional-kk} and Lemma~\ref{l:coupling}, will imply the required statement.

Before constructing the measure, let us state the special case where $m=1$ and $s=2$. The construction from~\cite{psss22} is elegant and natural, so we describe it here and repeat a part of the proof.

Let $\Gamma = (A, B, E)$, $|A|=|B|$, be a bipartite graph such that the pair $(A, B)$ is $(\eps, d)$-super-regular with $\eps \ll d$. For each vertex $v \in A \cup B$, choose a uniformly random set of $C$ neighbours of $v$ (with repetitions), independently of all other vertices, and denote the resulting graph by $J_C = J_C(G)$. Condition on the event that $J_C$ has a perfect matching (which occurs with probability at least $3/4$ for sufficiently large $C$, as shown in~\cite{psss22}), and output an arbitrary matching $ M(J_C)$. This induces a probability distribution $\mu_C$ on the matchings in $\Gamma$. More formally, for a matching $M$ in $(A, B)$,
\begin{equation}
	\label{eq:two-spread}
	\mu_C(M) = \frac{\pr{M(J_C) = M}}{\pr{J_C\text{ has a perfect matching}}} \leq \frac 43 \pr{M(J_C) = M}.
\end{equation}

\begin{lemma}[\cite{psss22}]
	\label{l:two-spread}
	Let $d \gg \eps >0$, and let $(A, B)$ be an $( \eps, d)$-super-regular pair in $\Gamma$ with $|A| = |B| = n$. The distribution $\mu_C$ is a $(4Cn^{-1})$-spread  probability distribution supported on the perfect matchings in $(A, B)$.
\end{lemma}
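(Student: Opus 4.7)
The plan is to verify the spread condition \eqref{eq:def-spread} directly by combining the identity \eqref{eq:two-spread} with an independence-plus-union-bound calculation for $J_C$. First, I would observe that $\mu_C$ is supported on perfect matchings of $(A,B)$, so $\mu_C(\{M : M \supseteq S\}) = 0$ unless $S$ is itself a matching in $(A,B)$; hence I may assume throughout that $S$ is a matching and set $k := |S|$. Summing \eqref{eq:two-spread} termwise over perfect matchings $M$ with $M \supseteq S$ yields
\[\mu_C(\{M : M \supseteq S\}) \;=\; \sum_{M \supseteq S} \mu_C(M) \;\leq\; \tfrac{4}{3}\,\pr{M(J_C) \supseteq S} \;\leq\; \tfrac{4}{3}\,\pr{J_C \supseteq S},\]
so the task reduces to bounding $\pr{J_C \supseteq S}$.

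Next, I would exploit the construction of $J_C$: each vertex $w \in A \cup B$ independently draws $C$ picks uniformly from $N_\Gamma(w)$. For a single edge $uv \in S$, the event $\{uv \in J_C\}$ is $\{v \in \mathrm{picks}(u)\} \cup \{u \in \mathrm{picks}(v)\}$, and a union bound over the $C$ trials of each endpoint, combined with $|N_\Gamma(w)| \geq (d-\eps)n \geq \tfrac{d}{2}n$ (which holds by super-regularity since $d \gg \eps$), gives
\[\pr{uv \in J_C} \;\leq\; \tfrac{C}{|N_\Gamma(u)|} + \tfrac{C}{|N_\Gamma(v)|} \;\leq\; \tfrac{4C}{dn}.\]
Because $S$ is a matching, the $k$ events $\{e \in J_C\}$ for $e \in S$ depend on $2k$ pairwise-disjoint packets of vertex-picks and are therefore mutually independent. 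Multiplying the single-edge bounds yields $\pr{J_C \supseteq S} \leq (4C/(dn))^k$, and combining with the previous display gives a spread constant of order $C/(dn)$. After absorbing the factor $1/d$ and the $\tfrac{4}{3}$ prefactor into the sampling constant $C$ (legitimate since in the application $C$ is taken sufficiently large depending on $d$), this is the claimed $(4Cn^{-1})$-spread bound.

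The one ingredient I would not re-derive here is the lower bound $\pr{J_C \text{ has a perfect matching}} \geq 3/4$ that underlies \eqref{eq:two-spread}; this is imported from~\cite{psss22}, and I expect it to be the genuinely delicate step, requiring an analysis of Hall-type obstructions in the sparse auxiliary graph $J_C$ and using the super-regularity of $(A,B)$ in an essential way (to rule out small saturated sets). Granting that input, the independence-plus-union-bound calculation outlined above delivers the spread bound in a few lines, and the remainder of the argument is routine.
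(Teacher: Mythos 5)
Your argument is essentially the paper's own proof: reduce to $S$ a nonempty partial matching via \eqref{eq:two-spread}, bound the per-edge probability coming from the $C$ uniform neighbour-picks, and use mutual independence of these events across the disjoint edges of $S$, with the $3/4$ bound on $J_C$ having a perfect matching imported from~\cite{psss22} exactly as the paper does. The only difference is constant bookkeeping: the paper asserts a per-edge bound of $2C/n$ (suppressing the $1/(d-\eps)$ factor from the neighbourhood size $|N_\Gamma(w)|\geq(d-\eps)n$), whereas you track the $4C/(dn)$ bound and absorb the $d$-dependence afterwards; either way one obtains an $O_d(C/n)$-spread, which is all that is needed where the lemma is used.
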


\begin{proof}
	Denote the random variable representing the $\mu_C$-random matching by $W$. For any set of edges $S$ in $\Gamma$, we need to show that $\mu_C(W \supseteq S) \leq (C'n)^{-|S|}$,  so we may assume that $S \neq \emptyset$ is a partial matching. Now, for each edge $e \in S$, the probability that either endpoint of $e$ chooses $e$ in $J_C$ is at most $2C/n$; moreover, these events are mutually independent for all edges of $S$. Hence, using $|S| \geq 1$,
	$$\mu_C(W \supseteq S) \leq \frac 43 (2C/n)^{|S|} \leq (4C/n)^{|S|},$$
	as required.
\end{proof}

To inductively construct spread measures on regular pairs, we will need the following lemma and notation.
If $K$ is a clique in a graph $\Gamma$, we write $\Gamma / K$ for the graph obtained by \emph{collapsing} $K$ to a single vertex; that is, $\Gamma/K$ has vertex set $(V(\Gamma) \setminus V(K)) \cup \{ K\}$, all the edges of $\Gamma - V(K)$, and all edges $xK$ with $V(K) \subseteq N_{\Gamma}(x)$. Similarly, if $\cK$ is a collection of vertex-disjoint cliques in $\Gamma$, we can define $\Gamma / {\cK}$ by collapsing all cliques in $\cK$ one by one.
By convention, if $M$ is a perfect matching between sets $(V_{ij}, V_{ij'})$, $j < j'$, we will consider the vertex set of $\Gamma/M$ to be $V_{ij}$.

\begin{lemma}
	[Lemma 4.3 in \cite{psss22}]
	\label{l:collapse-regular}  %
	For $C^{-1}, A^{-1}, d \gg \eps>0$, there are $d'=d'(C, A, d)$ and $c'  = c'(C, A, d)$ such that the following holds. Let $\Gamma= (V_0, V_1, V_2)$ be an $(\eps, d)$-super-regular tripartite graph with $|V_1| = |V_2| = n$ and $|V_0| \leq An$. Assume that for each edge $v_1 v_2 \in \Gamma[V_1, V_2]$, there are at least $(d^2-\eps^{1/2})n$ vertices $v_0 \in V_0$ which are adjacent to both $v_1$ and $v_2$. Let $\mu$ be the distribution on the perfect matchings $M$ between $V_1$ and $V_2$ given by~\eqref{eq:two-spread}. Given a random matching $M$, with probability at least $1-e^{-c'n}$, $\Gamma/M$ contains a $(16C/\log(1/\eps)^{1/4}, d')$-super-regular subgraph.
\end{lemma}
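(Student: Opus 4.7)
The plan is to verify two properties of $H := \Gamma/M$, viewed as a bipartite graph between $V_0$ and $V_1$ (each $v_1 \in V_1$ being identified with its $M$-partner): a minimum-degree condition on both sides, and $\eps'$-regularity with $\eps' = 16C/\log(1/\eps)^{1/4}$ and some $d' = d'(C, A, d)$ close to $d^2$. The easy (collapsed-matching) side is deterministic: for each matching edge $e = v_1 v_2 \in M$, the degree of the collapsed vertex $K_e$ in $H$ equals $|N_\Gamma(v_1) \cap N_\Gamma(v_2) \cap V_0|$, which by the codegree hypothesis is at least $(d^2 - \eps^{1/2})n$. Combined with $|V_0| \leq An$, this yields super-regularity on that side independently of $M$.

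The substantive task is to show that with probability at least $1 - e^{-c'n}$, every $v_0 \in V_0$ has $H$-degree at least $(d' - \eps')n$, and that $(V_0, V_1)$ is $\eps'$-regular in $H$. Fix $v_0 \in V_0$ and set $A_0 = N_\Gamma(v_0) \cap V_1$, $B_0 = N_\Gamma(v_0) \cap V_2$, so $\deg_H(v_0) = |M \cap E(\Gamma[A_0, B_0])|$. By super-regularity, $|A_0|, |B_0| \geq (d - \eps) n$; by Lemma~\ref{l:slicing}, the pair $(A_0, B_0)$ is $(O(\eps/d), d)$-regular, giving $e_\Gamma(A_0, B_0) = (d \pm O(\eps))|A_0||B_0|$. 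A uniformly random perfect matching of $\Gamma[V_1, V_2]$ would intersect $\Gamma[A_0, B_0]$ in roughly $d^2 n$ edges in expectation, and the aim is to prove the same quantitative bound for the $\mu_C$-random matching $M$.

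To obtain this concentration, I would analyse the underlying neighbour-choice randomness defining $J_C$, which is a product of $2n$ independent draws of $C$ neighbours; since $J_C$ admits a perfect matching with probability at least $3/4$, an event with failure probability $e^{-c'n}$ under the unconditional $J_C$-distribution retains failure probability $O(e^{-c'n})$ under $\mu_C$. The upper bound $|M \cap E(\Gamma[A_0, B_0])| \leq |J_C \cap E(\Gamma[A_0, B_0])|$ combined with a Chernoff-type bound for the sum of bounded-dependence indicators gives one-sided concentration at scale $(2Cd^2 \pm o(1))n$. For the matching lower bound, one exploits the fact that in $J_C$ the set $A_0$ has about $Cd\cdot |A_0|$ edges to $B_0$ and only $O(C(1-d)|A_0|)$ edges to $V_2 \setminus B_0$, forcing any perfect matching of $J_C$ to pair a $(d \pm o(1))$-fraction of $A_0$ into $B_0$. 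A union bound over $v_0 \in V_0$ then handles the degree condition. The $\eps'$-regularity is proved analogously: for each pair $(U, T)$ with $|U|, |T| \geq \eps' n$, the count $e_H(U, T)$ concentrates around $d'|U||T|$, and the $2^{(A+1)n}$-sized union bound is affordable against the $e^{-c'n}$ failure probability.

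The main obstacle I anticipate is that $M$ depends globally on $J_C$: re-sampling one vertex's $C$ choices can re-route long augmenting paths and alter $\deg_H(v_0)$ by far more than $O(C)$, so one cannot apply Azuma-Hoeffding directly to $\deg_H(v_0)$ as a function of the $2n$ independent neighbour-choice vectors. The correct strategy is to translate statistics of $M$ into structural properties of $J_C$ (incidence counts and codegrees) for which bounded-differences concentration is routine, and then invoke expansion and matching-forcing arguments in $J_C$ to conclude that every perfect matching of $J_C$ realises the required density near $d^2$ in each set $E(\Gamma[A_0, B_0])$.
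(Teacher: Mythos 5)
You should first note that the paper does not prove this statement at all -- it is imported verbatim as Lemma 4.3 of~\cite{psss22} -- so the comparison below is with the argument given there, which runs through the spread property of $\mu_C$ rather than through concentration of $\deg_{\Gamma/M}(v_0)$.

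Your treatment of the collapsed side is fine (the codegree hypothesis makes that part deterministic), but the central step of your plan fails. The distribution $\mu_C$ outputs an \emph{arbitrary} perfect matching of $J_C$, so any lower bound on $|M\cap E(\Gamma[A_0,B_0])|$ must hold for \emph{every} perfect matching of $J_C$, and your matching-forcing claim is false: knowing that $A_0$ sends about $Cd|A_0|$ $J_C$-edges into $B_0$ and order $C|A_0|$ elsewhere does not force any fixed fraction of the partners of $A_0$ to lie in $B_0$ (note also that $C(1-d)|A_0|$ is the larger of the two counts when $d<1/2$). In fact, when $d<1/2$ and $|A_0|,|B_0|\approx dn$, the graph $J_C$ minus the edges of $\Gamma[A_0,B_0]$ typically still satisfies Hall's condition (it is the out-$C$ graph of a host which is dense between all relevant large sets except $(A_0,B_0)$, and $|A_0|+|B_0|<n$), so with probability bounded away from $0$ there exist perfect matchings of $J_C$ avoiding $\Gamma[A_0,B_0]$ entirely; an adversarial selection rule then gives $\deg_{\Gamma/M}(v_0)=0$, and no statement of the form ``every perfect matching of $J_C$ realises density near $d^2$ in each $E(\Gamma[A_0,B_0])$'' can be true. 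Your Chernoff upper bound via $|M\cap E|\le|J_C\cap E|$ only caps counts at scale $Cd^2n$, far above $d'n$, so it cannot supply the two-sided density control needed for $\eps'$-regularity either.

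The missing idea is to use the spreadness of $\mu_C$ itself (Lemma~\ref{l:two-spread} and \eqref{eq:two-spread}): for any fixed edge set $B$ one has $\Pr[|M\cap B|\ge k]\le\binom{|B|}{k}(4C/n)^k$. Taking $B=B_U$ to be the edges $v_1v_2$ of $\Gamma[V_1,V_2]$ whose codegree into a fixed $U\subseteq V_0$ deviates from $d^2|U|$ (at most $O(\eps^{1/2})n^2$ such edges, by two applications of regularity), one gets that with probability $1-e^{-\Omega(n)}$ the matching contains at most $\theta n$ edges of $B_U$, with $\theta$ of order $C/\log(1/\eps)^{1/4}$ -- strong enough to survive a union bound over all $2^{|V_0|}\le 2^{An}$ sets $U$ (this is where the interplay of $c'$, $A$ and the parameter $16C/\log(1/\eps)^{1/4}$ comes from). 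Writing $e_{\Gamma/M}(U,W)=\sum_{e\in M\cap W}|N_\Gamma(e)\cap U|$ then gives regularity of the contracted pair for all large $U$ and $W$ simultaneously; the codegree hypothesis gives the min-degree of contracted vertices, and the few vertices of $V_0$ of low degree (whose existence your approach cannot rule out, and which genuinely may occur) are exactly what the weaker conclusion ``\emph{contains} a super-regular \emph{subgraph}'' is there to absorb. So the route via structural properties of $J_C$ alone should be abandoned in favour of the spread bound on $M$.
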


Now we formally state the lemma on constructing the spread measure. Our proof closely follows that of~\cite{psss22}, but we need to modify it because of the fact that we are only sparsifying inside $\bigcup_j{V_{ij}}$, so our desired probability threshold (in the random 2-graph setting) is roughly $n^{-2/s}$.

\begin{lemma}
	\label{l:spread}
	For integers $m, s, r$  and constants $\vartheta \gg \eps$, there exists a constant $A$ such that the following holds. Let $(V_{ij})$ be a collection of $n$-vertex pairwise disjoint subsets of a graph $\tG$, with $i \in [m], j \in [s]$. Suppose that each $\tG[V_{ij}, V_{i'j'}]$ is an $(\eps, d, \vartheta)$-super-regular pair with $d \geq \vartheta$. For $r =ms$, let $Z$ and $\Hc$ be as defined above. Then there exists an $(An^{-(s-1)})$-spread measure on $\Hc$.
\end{lemma}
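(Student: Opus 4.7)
The plan is to proceed by induction on $s$. The base case $s=1$ is essentially trivial: each $\tG_i$ reduces to the single class $V_{i1}$, the ground set $Z$ consists of singletons, and $\Hc$ consists of (at most one element corresponding to) a $K_m$-factor across the $V_{i1}$'s, which exists by the Blow-up Lemma applied to the super-regular $m$-partite structure. The uniform measure on $\Hc$ is trivially $1$-spread.

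For the inductive step from $s$ to $s-1$, I follow the collapsing strategy of~\cite{psss22}. For each $i \in [m]$ independently, sample a random perfect matching $M_i$ between $V_{i1}$ and $V_{i2}$ from the distribution $\mu_C$ given by Lemma~\ref{l:two-spread}, and collapse each $M_i$ to a single class $V_i^\star$ of size $n$. By iteratively applying Lemma~\ref{l:collapse-regular} -- once per group, with $V_0$ ranging over the remaining classes -- one shows that with probability at least $1 - m^2 e^{-\Omega(n)}$ every pair of classes in the resulting $m(s-1)$-partite structure is $(\eps', d', \vartheta')$-super-regular with $\vartheta' \gg \eps'$. Condition on this event $\cE$ (which costs a factor of at most $2$) and apply the inductive hypothesis to obtain an $(A'n^{-(s-2)})$-spread measure $\mu'$ on the corresponding set $\Hc'$ of extendable hypergraphs in the collapsed structure. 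Now define $\mu$ by sampling $(M_i)_{i}$ from $\mu_C^{\otimes m}$ conditioned on $\cE$, then sampling $H'$ from $\mu'$, and un-collapsing each $(s-1)$-clique $\tilde K \in H_i'$ to the $s$-clique obtained by replacing its collapsed vertex with the two endpoints of the corresponding matching edge. The resulting $H = \bigdcup_i H_i$ is extendable in $\tG$ because any $K_{m(s-1)}$-factor of $\tG / \bigcup_i M_i$ un-collapses to a $K_r$-factor of $\tG$, using that a collapsed edge exists only when all of the underlying original edges are present.

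For the spread bound, fix $S = \bigdcup_i S_i \subseteq Z$. For each $K \in S_i$ write $e_i(K) := K \cap (V_{i1} \cup V_{i2})$ for its projection edge, $E_i := \{e_i(K) : K \in S_i\}$, and $\tilde K$ for the $(s-1)$-clique of $\tG/M_i$ obtained by replacing $e_i(K)$ with its collapsed vertex (defined when $e_i(K) \in M_i$). If two distinct $K, K' \in S_i$ share a projection edge, then $\tilde K$ and $\tilde K'$ share the collapsed vertex, hence cannot coexist in the $(s-1)$-factor $H'_i$, so $\mu(H \supseteq S) = 0$. Otherwise $|E_i| = |S_i|$, and writing $\tilde S = \bigdcup_i \{\tilde K : K \in S_i\}$,
\[\mu(H \supseteq S) \leq 2\, \mu_C^{\otimes m}\bigl(M_i \supseteq E_i \text{ for all } i\bigr)\, \sup_{(M_i) \in \cE} \mu'(H' \supseteq \tilde S) \leq 2\prod_{i \in [m]}\left(\frac{4C}{n}\right)^{|S_i|}\left(\frac{A'}{n^{s-2}}\right)^{|S|},\]
by Lemma~\ref{l:two-spread} (applied independently for each $i$) and the inductive spread of $\mu'$. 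The right-hand side equals $2(4CA')^{|S|} n^{-(s-1)|S|}$, and the induction closes with $A = 8CA'$.

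The main obstacle is establishing super-regularity of the collapsed structure, especially for pairs of collapsed classes $(V_i^\star, V_{i'}^\star)$ with $i \neq i'$. This requires iterating Lemma~\ref{l:collapse-regular} (first collapse $M_i$ inside the pairs involving $V_{i1}, V_{i2}$, then collapse $M_{i'}$ inside the resulting graph) and checking that the tripartite common-neighbour hypothesis persists after previous collapses -- namely, that for a typical matching edge $(a', b') \in M_{i'}$, sufficiently many edges of $M_i$ have both endpoints in $N(a') \cap N(b')$. This follows from concentration of common-neighbour counts in super-regular pairs, combined with Chernoff-type bounds for the random matching $M_i$. Once super-regularity is preserved at every step with exponentially high probability, the rest of the induction is the clean factorisation above.
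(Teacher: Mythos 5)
Your overall architecture matches the paper's: induction on $s$ with base case via the Blow-up Lemma, collapsing one matching per group sampled from the distribution $\mu_C$ of Lemma~\ref{l:two-spread}, applying Lemma~\ref{l:collapse-regular} to keep super-regularity, un-collapsing, and closing the spread bound as a product of the $(4C/n)^{|S_i|}$ matching-spread factors with the inductive $(A'n^{-(s-2)})^{|S|}$ factor (including the harmless factor $2$ from conditioning). The step you flag as ``the main obstacle'' is, however, a genuine gap rather than a routine verification, and your proposed fix does not work as stated. First, Lemma~\ref{l:collapse-regular} requires that \emph{every} edge of the bipartite graph on which the matching distribution~\eqref{eq:two-spread} is defined has at least $(d^2-\eps^{1/2})n$ common neighbours in $V_0$; if you sample $M_i$ from $\mu_C$ on the full super-regular pair $\tG[V_{i1},V_{i2}]$, this hypothesis simply fails, since a super-regular pair may contain $\Theta(\eps n^2)$ atypical edges, and a $\mu_C$-random matching will typically use a non-negligible number of them. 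You cannot restrict to good edges after the matchings have already been sampled independently, so the lemma cannot be invoked as a black box. Second, for a pair of two collapsed classes the hypothesis becomes: for each candidate edge $a'b'$ of group $i'$, at least $(d^2-\eps^{1/2})n$ edges of $M_i$ lie entirely inside $N(a')\cap N(b')$. This is not a consequence of ``concentration plus Chernoff'': the common neighbourhoods have size only about $d^2 n \ll n/2$, so a worst-case perfect matching could avoid them entirely, and $\mu_C$ is not a uniformly random matching --- it outputs an \emph{arbitrary} perfect matching of the sparse random graph $J_C$, so Chernoff bounds do not apply to its edge set. Ruling out such adversarial behaviour is exactly the content of Lemma~\ref{l:collapse-regular}, so your verification is close to circular.

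The paper's resolution, which is the idea missing from your plan, is to sample the matchings \emph{sequentially} with a degrading hierarchy $d\gg d_1\gg\dots\gg d_m\gg C^{-1}\gg\eps_m\gg\dots\gg\eps_1\gg\eps$: having collapsed $M_1,\dots,M_\ell$, one first passes to the subgraph $F\subseteq\tG[V_{\ell+1,s-1},V_{\ell+1,s}]$ of edges having at least $(d_\ell^2-\eps_\ell^{1/2})n$ common neighbours in every remaining (including already-collapsed) part; $F$ is still $(\eps_{\ell+1},d_\ell/2)$-regular, and only then is $\mu_C$ defined on $F$ via~\eqref{eq:two-spread}. This makes the hypothesis of Lemma~\ref{l:collapse-regular} hold by construction, and the ``fitting'' event then has probability $1-e^{-c'n}$, after which your un-collapsing and spread computation go through essentially as you wrote them. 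Without this restrict-then-sample device (or a re-proved, robust version of Lemma~\ref{l:collapse-regular} tolerating bad edges), the inductive step is not established.
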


\begin{proof}
	We prove the statement by induction on $s$. For $s=1$ (and $r=m$), consider the `1-uniform matching' $H = \bigcup_{i \in [m]}V_{i1}$. By  Lemma~\ref{l:blow-up} (the Blow-up Lemma), $H$ is extendable, i.e.~there is a $K_r$-factor in $\tG$. So the output of our desired measure will be $H$ with probability 1, and this measure is a 1-spread.
	
	Now let $s>1$. We may assume that the pairs in $\tG$ are $(\eps, \vartheta, \vartheta-\eps)$-super-regular by passing to a subgraph of $\tG$ if necessary, using Lemma~\ref{l:super-reg-min}.
	We start by constructing a~distribution $\mu$ on the perfect matchings in  $\bigcup_{i\in[m]} \tG[V_{i,s-1}, V_{i,s}]$, such that the $r(s-1)$-partite graph obtained by contracting a $\mu$-random matching still consists of appropriately regular pairs with high probability. Let $d \gg d_1 \gg \dots \gg d_{m} \gg C^{-1} \gg \eps_m \gg \dots \gg \eps_1 \gg \eps$.
	
	Assume that for some $\ell < m$, we have already sampled random matchings $M_i$ on $\bigcup_{i\in[\ell]} \tG[V_{i,s-1}, V_{i,s}]$, and that each pair of $\tG / (M_1 \dcup \ldots \dcup M_\ell)$ 
    is $(\eps_\ell, d_\ell)$-super-regular. Recall that the vertex set of $\tG[V_{i,s-1}, V_{i,s}]/M_i$ is $V_{i,s-1}$. Let $F$ be the subgraph of $\tG[V_{\ell+1,s-1}, V_{\ell+1,s}]$ consisting of edges which have at least $(d_\ell^2-\eps_\ell^{1/2})n$ common neighbours in each of the remaining parts -- this graph is $(\eps_{\ell+1}, d_{\ell}/2)$-regular. Let $\mu_{C}$ be the distribution supported on perfect matchings in $F$ given by~\eqref{eq:two-spread} with the constant $C \ll d_{l+1}^{-1}$. Given a matching $M_{\ell+1}$ sampled from $\mu_{C}$ and $(i,j) \notin \{ (\ell+1, s-1), (\ell+1, s) \}$, let  $\tG^- = \tG / (M_1 \dcup \ldots \dcup M_{\ell+1})$ and $\Gamma_{ij,M_{\ell+1}} = \tG^-[V_{\ell+1,s-1},V_{ij}]$ be the `collapsed' bipartite graph. By Lemma~\ref{l:collapse-regular} and the union bound, with probability at least $1-e^{-c'n}$, each $\Gamma_{ij, M_{\ell+1}}$ has a subgraph $\Gamma'_{ij,M_{\ell+1}}$ which is $(\eps_{\ell+1}, d_{\ell+1})$-super-regular; if this occurs, we say that $M_{\ell+1}$ is \emph{fitting}. 
	Condition on the event that $M_{\ell+1}$ is fitting, and contract it. Repeating this procedure until $\ell+1 = m$, we have constructed a random matching $M = M_1 \dcup \ldots \dcup M_m$ which is fitting, that is consists of fitting matchings $M_i$, with probability $1-e^{-c''n}$. Let $\mu_1$ be its distribution.
	
	Contracting $M$, we obtain an $m(s-1)$-partite graph $\tG'$ which satisfies the hypothesis of the Lemma with $s-1, d_m, \eps_m$ in place of $s, d, \eps$. Let $\Hc' = \Hc(\tG')$ be the set of extendable $(s-1)$-uniform perfect matchings in $\tG'$, and let $\mu'$ be the $(A'n^{-s+2})$-spread distribution on $\Hc'$ given by the inductive hypothesis. Note that any $H_1' \dcup H_2' \dcup \dots \dcup H_m' \in \Hc'$ corresponds to an extendable set $H_1 \dcup H_2 \dcup \dots \dcup H_m$ in $\tG$ via `uncontracting' the matching $M$, so we may define the corresponding measure $\mu$ on $\Hc$. More formally,
	\begin{equation}
		\mu(H_1 \dcup \dots \dcup H_m) = \frac{\mu'(H_1/M_1 \dcup H_2/M_2 \dcup \dots \dcup H_m/M_m)}{\mu_1 \left[ M\text{ is fitting} \right]}.
	\end{equation}
	It remains to show that $\mu$ is an $(An^{-s+1})$--spread. 
	
	To this end, let $Y_1 \dcup \dots \dcup Y_m$ be an $s$-uniform hypergraph (with $Y_i \subset \tG_i^{(s)}$) with $y = \sum_{i \in [m]}|Y_i|$. We need to bound the probability that a set $H_1 \dcup \dots \dcup H_m \in \Hc$ sampled from $\mu$ contains $Y_1 \dcup \ldots \dcup Y_m$ (this probability is zero unless $Y_1 \dcup \dots \dcup Y_m$ are vertex-disjoint). Recall that $M$ is the random matching sampled from $\mu_1$. Let $Y$ be the matching on $\bigcup_{i \in [m]}\tG[V_{i,s-1}, V_{i,s}]$ induced by $(Y_1, \ldots, Y_m)$. If $H_i \supset Y_i$ for $i \in [m]$, then the matching $M$ contains $Y$, which occurs with probability at most $(4C/n)^{y}(1+o(1))$. 
	Furthermore, for a fixed (fitting) matching $M \supseteq Y$, the $\mu'$-probability that each $H_i/M$ contains $Y_i/M$ is at most $\left(A'n^{-s+2} \right)^{y}$ by the inductive hypothesis. We conclude that
	$$\mu \left(H_i \supseteq Y_i \text{ for all }i \in [m] \right) \leq (An^{-s+1})^{y},$$
	for an appropriate constant $A$. Hence $\mu$ is indeed an $(An^{-s+1})$--spread, as required.
\end{proof}

We can now prove Lemma~\ref{l:regular-pairs-factors}. 
Let $G$ be as in the statement, and let $G_i$ be the complete graph on $\bigcup_{j \leq s^*(i)}V_{ij}$.
In fact, we will prove an even stronger statement, where $G(nr, p)$ is replaced by a $p$-random subgraph of $G_1 \dcup \dots \dcup G_m$, for some graphs $G_i$ such that $G_{i}[V_{ij}, V_{ij'}]$ is $(\eps, d, \vartheta)$-super-regular for each $i \in [m]$.

\begin{proof}[Proof of Lemma~\ref{l:regular-pairs-factors}]
	First assume that $s^*(i) = s$ for all $i \in [m]$, as otherwise we can add auxiliary sets $V_{ij}$, $s^*(i) < j \leq s$ connected to everything else. Let $Z = G_1^{(s)}\dcup \dots \dcup G_m^{(s)}$. 
	Lemma~\ref{l:spread} (applied with $\tG = G \cup \bigcup_{i \in [m]}G_i$) and  Theorem~\ref{thm:fractional-kk} imply that, for $\pi = O(n^{-s+1} \log n)$, the random $s$-uniform hypergraph $Z_\pi$ contains a copy of $\cH$.
	
	For $s=2$, this completes the proof. For $s \geq 3$, Lemma~\ref{l:coupling} implies that with high probability, a random subgraph of $G_1 \dcup \dots \dcup G_m$ with edge probability $ p = 2\pi^{2/(s(s-1))} \leq Cn^{-2/s} (\log n)^{2/(s(s-1))}$ contains $\partial H$ for some $H \in \Hc$. By definition of $\Hc$, this graph $\partial H$ along with the edges of $G$ gives the desired $K_r$-factor.
\end{proof}
%
%
%
%
%
%
%
\section{Auxiliary extremal results -- covering the \textit{reduced graph} by matchings}
\label{sec:6-chr}
In this section we prove an auxiliary extremal result which will be used to cover an appropriate \textit{reduced graph} in the proof of Lemma~\ref{l:non-extremal}. Note that we will actually need a slightly stronger statement (Lemma~\ref{l:6.14chr}), which will be stated and proved later in this section.

 This result can be viewed as a stability version of (a special case of) tiling results due to Shokoufandeh and Zhao or K\"uhn and Osthus, which strengthens Koml\'os' Tiling Theorem~\cite{komlos00,ko09,sz03}. Another stability version of Koml\'os' theorem can be found  in~\cite{hhp19}, but for our proof,  the number of leftover vertices $C$ has to be independent of $\delta$. 
Recall that $r = s+t$ with $t \leq s$. Let $\Qone$ be the graph consisting of three disjoint vertex sets $L$, $M$, $N$ such that $L = s-t$, $|M|=|N|=t$, the edges between $L$ and $M$ form a complete bipartite graph, and the edges between $M$ and $N$ form a matching. Note that $|V(\Qone)| = s+t$. For a copy of $\Qone$, we often refer to the vertices representing $M$ as the \textit{$M$-part} of this copy, and similarly for $L$ and $N$ we refer to the $L$-part and $N$-part, respectively.

\begin{figure}
    \begin{tikzpicture}[scale=0.8]
        \centering
        \draw[fill=white] (0,0) ellipse (1cm and 2cm);
        \draw[fill=white] (4,0) ellipse (1cm and 2cm);
        \draw[fill=white] (8,0) ellipse (1cm and 2cm);
        \coordinate (v1) at (0,-0.5);
        \coordinate (v2) at (0,0.5);
        \coordinate (v3) at (4,-1);
        \coordinate (v4) at (4,0);
        \coordinate (v5) at (4,1);
        \coordinate (v6) at (8,-1);
        \coordinate (v7) at (8,0);
        \coordinate (v8) at (8,1);
        \begin{pgfonlayer}{front}
		\foreach \i in {v1,v2,v3,v4,v5,v6,v7,v8} \fill (\i) circle (3pt);	
        \foreach \i in {v3,v4,v5}
            \foreach \j in {v1,v2}
                \draw (\i) -- (\j);
        \draw (v3) -- (v6);
        \draw (v4) -- (v7);
        \draw (v5) -- (v8);
        \node at (0,-2.2) [below] {$L, |L|=s-t$};
        \node at (4,-2.2) [below] {$M, |M|=t$};
        \node at (8,-2.2) [below] {$N, |N|=t$};
    \end{pgfonlayer}

    \end{tikzpicture}
\caption{The graph $\Qone$ with $s=5$ and $t=3$}
\end{figure}
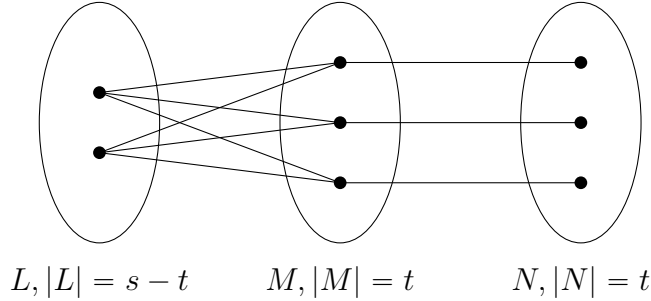

Note that for $s=t = r/2$, $\Qone$ is simply a matching of size $s$. The proof of Lemma~\ref{l:6.2chr} also holds in this case, but such a result can probably be found elsewhere in the literature. In the proof, we will apply the classical theorem of K\H{o}v\'ari, S\'os and Tur\'an, which states that for given integers $a$ and $b$, and $\epsilon>0$, any sufficiently large graph of density at least $\epsilon $ contains a copy of $K_{a,b}$~\cite{kst54}.

\begin{lemma}
	\label{l:6.2chr} There is a constant $C$ such that for $\delta \gg 1/n $ and $\gamma=10sr\delta$, if $G$ is an $n$-vertex graph with minimum degree $\delta(G) \geq \left(1- \frac sr - \delta \right)n$, then 
    \begin{itemize}
	   \item[(i)] $G$ contains vertex-disjoint copies of $K_{2}$ and $\Qone$ covering all but at most $C$ of its vertices, or
	   \item[(ii)] $G$ has an independent set of size $\left(\frac sr - \gamma \right)$n.
    \end{itemize}
\end{lemma}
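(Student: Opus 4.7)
My approach is a stability argument via a maximum $\{Q, K_2\}$-tiling together with a swapping procedure. Let $\mathcal{T}$ be such a tiling, chosen to maximize first $|V(\mathcal{T})|$ and, among such tilings, to maximize the number of $\Qone$-copies; set $U = V(G) \setminus V(\mathcal{T})$. By maximality with respect to $K_2$, the set $U$ is independent in $G$. If $|U| \leq C$ for a suitable constant $C$ (to be fixed below), conclusion~(i) holds, so we assume $|U| > C$ and aim to produce the independent set required by~(ii).

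The core step is an augmentation of $U$ by local swaps. For each copy of $\Qone$ in $\mathcal{T}$, the $N$-part consists of $t$ pairwise non-adjacent vertices inside the tile. The idea is to find, for as many tiles as possible, an alternative $\Qone$-structure using the same $L$-part and $M$-part together with $t$ new vertices drawn from $U$ (playing the role of the new $N$-part). When such a substitution exists, the old $N$-vertices can be moved into $U$ while $U$ remains independent, since both the old $N$-part and the current $U$ are independent, and any edge between them would upgrade the tiling by yielding an additional $K_2$ or $\Qone$, contradicting maximality. A similar but simpler swap is available for $K_2$-tiles. Call a tile \emph{bad} if no such swap exists. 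Using $\delta(G) \geq (1 - s/r - \delta)n$ together with double-counting the $U$-to-$V(\mathcal{T})$ edges (of which there are at least $(1 - s/r - \delta)n \cdot |U|$), one bounds the number of bad tiles by $O(\delta n/r)$. Consequently, after applying all available swaps, the enlarged $U$ has size at least $(s/r - O(\delta))n$. A final cleanup, removing at most $O(\delta n)$ vertices to eliminate any residual edges produced inside $U$ during the swap transfer, yields a genuine independent set of size at least $(s/r - \gamma)n$ with $\gamma = 10 s r \delta$.

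The main obstacle is verifying that the swap operations compose correctly and that the bad-tile bound is indeed $O(\delta n/r)$: the failure of a swap at a $\Qone$-tile forces a structural constraint (for instance, that the $N$-part is adjacent to almost all of $U$, or that the $L$-$M$ block has nearly saturated common neighbourhoods into $U$), and the accumulation of such constraints is exactly what produces the independent set in the alternative. The singular case $s = t$, where $\Qone$ is simply a matching of size $s$ and the statement reduces to a near-perfect matching versus large independent set dichotomy, is treated separately by invoking the classical fact that $\delta(G) \geq (1/2 - \delta)n$ forces a matching of deficiency at most $O(\delta n)$; if this deficiency exceeds $C$, a Tutte--Berge argument shows that $G$ must be close to an unbalanced complete bipartite graph and hence contains an independent set of the required size.
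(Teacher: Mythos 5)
Your overall strategy (maximal tiling plus local swaps, yielding either conclusion~(i) or a stability-type independent set) is in the right spirit, but the central size-accounting step fails. First, the swap you describe does not enlarge $U$ at all: replacing the $N$-part of a $\Qone$-tile by $t$ vertices of $U$ and releasing the old $N$-vertices into $U$ leaves $|U|$ unchanged, and $|U|$ may be as small as $C+1$, so the claim that the ``enlarged $U$'' reaches size $\left(\frac sr - O(\delta)\right)n$ is unsupported. More fundamentally, the only vertices that can ever migrate into $U$ under your swaps are $N$-part vertices, and the $N$-parts make up only a $\frac tr$ proportion of each $\Qone$-tile; since $t<s$ in the non-singular case, even $U$ together with \emph{all} $N$-parts has size at most $\frac tr n + O(1)$, strictly below the target $\left(\frac sr-\gamma\right)n$. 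The paper's independent set is instead the union of the $L$-parts \emph{and} $N$-parts of almost all $\Qone$-copies --- namely those whose $M$-part lies in the common neighbourhood of two uncovered vertices $x,y$ --- and its independence follows from a one-shot exchange (replace the two endpoints of a hypothetical edge by $x$ and $y$ and add that edge as a $K_2$, increasing a weighted index), which also sidesteps your problem of having to supply $\Omega(n)$ tiles with fresh vertices from a possibly constant-sized $U$.

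Second, you never bound the mass covered by $K_2$-tiles. Each $K_2$ can contribute at most one vertex to any independent set, whereas each $\Qone$ contributes $s$ of its $r$ vertices, so conclusion~(ii) is out of reach unless the vertices covered by $K_2$'s (plus the uncovered ones) amount to only $O(\delta n)$. This is exactly where the paper works hardest: it maximises the lexicographic index $(k_2+tq,\,q)$ and uses a K\H{o}v\'ari--S\'os--Tur\'an argument to show that too many edges between the uncovered set and the $K_2$-tiles would let one merge $s-t$ uncovered vertices with $t$ of those $K_2$'s into a new $\Qone$, contradicting maximality; combined with the observation that uncovered vertices have no neighbours in $L$- or $N$-parts and the minimum degree condition, this yields $|A_1\cup A_2|\leq 4\delta rn$. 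Your double count of $U$-to-tiling edges bounds only ``bad'' $\Qone$-tiles and says nothing about how much of $G$ is tiled by $K_2$'s, so the argument as proposed cannot reach the required independent set size. (Your separate treatment of the singular case $s=t$ via Tutte--Berge could be made to work --- taking one vertex per component of $G-S$ gives the independent set --- but note the paper's single argument covers that case as well.)
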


\begin{proof}
    Let $\alpha = \frac tr$, so that $\delta(G) \geq (\alpha - \delta) n$. Fix constants $C \gg s, r$ and $\delta \gg n_0^{-1}$, and assume that $n \geq n_0$. Let $\Pc = \{K_1, K_2, \Qone \}$. Given a~$\Pc$-factor $\cF$ of $G$, we let $k_i^\cF$ be the number of $K_i$-copies in $\cF$ (for $i =1,2$) and $q^\cF$ be the number of $\Qone$ copies in $\cF$. 
	
	Evidently, $G$ has at least one $\Pc$-factor (e.g. each vertex can be covered by its own copy of $K_1$);  fix a factor $\cF$ of $G$ such that the vector
	$$(k_2^\cF + tq^\cF,  q^\cF)$$
	is lexicographically maximal. We call this vector the \emph{index of the factor}. 
	
	For $i \in [2]$, let $\fA_i$ be the set of $K_i$-copies in $\cF$, and let $A_i$ be the set of vertices defined by them. Define $\fB$ and $B$ similarly with respect to $\Qone$. If $|A_1| \leq C$, then the conclusion (i) holds. So suppose this is not the case.

	Note that $A_1$ is an independent set as otherwise we can take two elements of $\fA_1$ and replace them by a copy of $K_2$, which increases the index of the factor.
	
	\begin{claim}
		\label{claim:6.4chr}
		$e(A_1, A_2) \leq |A_1|(\alpha|A_2|/2+C)$.
	\end{claim}
	
	\begin{proof}
		Assume that this is not the case. Then  $|A_2| \geq C$ (as otherwise $e(A_1, A_2) \leq |A_1|C$) and
		\begin{equation}
			\label{eq:edges-a}
			e(A_1, A_2) >\alpha|A_1||A_2|/2.
		\end{equation}
		Let $O$ be the following orientation of the edges in $\fA_2$: for each $vw \in \fA_2$, include the ordered pair $(v, w) $ in $O$ if $|N_G(v)\cap A_1| \geq |N_G(w) \cap A_1|$; if $|N_G(v)\cap A_1| = |N_G(w) \cap A_2|$, the orientation of $vw$ is chosen arbitrarily. Construct an auxiliary bipartite graph $H = H(O)$ with vertex classes $\fA _1$ and $O$ by joining $u \in \fA_1$ to $(v, w) \in O$ if $u \in N_G(v)$. 
		Now, by the choice of $O$, the number of edges in $H$ is at least
		$$e(H) \geq \frac 12 e(A_1, A_2) > \frac 14 \alpha |A_1| |A_2|.$$
		Since $|\fA_1|\geq C$ and $|\fA_2| \geq C/2$, the theorem of K\H{o}v\'ari, S\'os and Tur\'an yields a copy of $K_{s-t, t}$ in $H$.
		Using this copy of $K_{s-t, t}$, we may merge its $s-t$ vertices and $t$ edges (of $\fA_2$) into a copy of $Q$, to create a new $\cP$-factor.
		
		This alteration increases $q^\cF$ without changing $k_2^\cF +tq^\cF$, 
		contradicting the maximality of $\cF$.
	\end{proof}
 
	Given a copy $\tilde Q$ of $\Qone$, we use $\tilde L$, $\tilde M$ and $\tilde N$ to denote its $L$-part, $M$-part and $N$-part, respectively.
 
	\begin{claim}
		\label{claim:6.6chr}
		If $x \in A_1$ and $\tilde Q \in \fB$, then $x$ has no neighbours in $\tilde L \cup \tilde N$. In particular,
		$$|N_G(x) \cap V(\tilde Q) | \leq t = \alpha |V(\tilde Q)|.$$
	\end{claim}
 
	\begin{proof}
		Suppose that the claim doesn't hold. We will modify $\cF$ by removing $\tilde{Q}$ and adding $(t+1)$ copies of $K_2$, which gives a factor with a higher index, contradicting the choice of $\cF$. We have two cases for the vertex $u \in N_G(x) \cap \left( \tilde L \cup \tilde N \right)$. If $u \in \tilde L$, we simply add the matching between $\tilde M$ and $\tilde N$, as well as the edge $xu$ to $\cF$. If $u \in \tilde N$, let $u'$ be its $\tilde Q$-neighbour in $\tilde M$, and let $\ell \in \tilde L$. In this case, we add the edges $xu$, $u' \ell$ and $\tilde Q\left[\tilde M \setminus\{u'\}, \tilde N \setminus \{u\} \right]$ to $\cF$.
	\end{proof}
 
	\begin{claim}
		We have $|A_1 \cup A_2| \leq 4\delta rn$.
	\end{claim}
 
	\begin{proof}
		Using the minimum degree of $G$ and the fact that $A_1$ is an independent set, we have $e_G(A_1, V(G) \setminus A_1) \geq (\alpha-\delta)|A_1|n$. On the other hand, using the previous two claims,
		$$e_G(A_1, V(G) \setminus A_1) \leq |A_1|(\alpha|A_2|/2+C) + \alpha |A_1| |B|.$$
		Putting these bounds together, we get
		\begin{align*}
			(\alpha - \delta) n &\leq \alpha (n - |A_1|) - \frac{\alpha|A_2|}{2}+C,
		\end{align*}
        which in turn implies
        \begin{align*}
            \frac{1}{2r}(|A_1| + |A_2|) \leq \alpha |A_1| + \frac{\alpha|A_2|}{2} & \leq \delta n + C \leq 2 \delta n,
        \end{align*}
		as required.
	\end{proof}
	
	For a vertex $x \in A_1$, let $\mathfrak{D}_x$ be the family of $\Qone$-copies $\tilde Q$ whose $M$-set is a subset of $N_G(x)$. 
    Now pick two distinct vertices $x$ and $y$ in $A_1$, and let $\mathfrak{D} = \mathfrak{D}_x \cap \mathfrak{D}_y$. Let $Z$ be the union of $L$-sets and $N$-sets of the copies of $Q$ in $\mathfrak{D}$. We will show that $Z$ is an independent set of size close to $(1-\alpha)n$.
	
	Firstly, assume that there is an edge $x_1 y_1$ in $G[Z]$. We modify $\cF$ as follows. Replace $x_1$ by $x$ and $y_1$ by $y$ in their respective $\Qone$-copies (noting that $x_1$ and $y_1$ may lie in the same copy). Furthermore, add the edge $x_1y_1$ (viewed as a copy of $K_2$) to $\cF$ -- this increases the index $k_2^\cF + tq^\cF$, contradicting the maximality of $\cF$. Hence, $Z$ is an independent set.
	
	Finally, we claim that $|Z| \geq \left(\frac sr - \gamma \right)n$ with $\gamma = 10sr \delta$, which yields conclusion (ii) and completes the proof of the Lemma. Firstly, to bound $|\fD_x|$, note that by the definition of $\fD_x$ and Claim~\ref{claim:6.6chr}
	$$\left(\frac tr -\delta \right) n \leq |N_G(x)| \leq |A_1 \cup A_2| +  t|\fD _x | + (t-1)\left(\frac nr - |\fD _x| \right) = |A_1 \cup A_2|+ |\fD_x| + \frac{(t-1)n}{r}.$$
	Rearranging, we get
	$$\frac nr-|\fD_x| \leq \delta n + |A_1\cup A_2| \leq 5\delta rn,$$
	and hence $|\fD_x| \geq \frac nr (1- 5r^2\delta )$. It follows that $|\fD| = |\fD_x \cap \fD_y| \geq \frac nr (1- 10r^2\delta )$.
	Since $Z$ is the union of the $L$-sets and $N$-sets in $\mathfrak{D}$, we have $|Z| = s|\fD_x| \geq \frac{sn}{r}(1-10r^2\delta ) \geq n \left(\frac{s}{r}-10sr \delta \right)$, as required.
\end{proof}

Now we turn to the stronger version of the previous lemma, in which we also require some \textit{absorbing} copies of $K_2$ in the constructed covering. Note that in most cases, when $r<2s$, we only need the following definition with $g=2$.
Let $\delta >0$ and let $g$ be a positive integer. A \emph{$(\delta, g)$-absorber} $\absorber$ in an $n$-vertex graph $G$ is a collection of copies of $K_g$ in $G$ covering at most $\delta n$ vertices such that for every $x \in V(G)$, there are at least $\frac{\delta^2}{24g^2} n$ cliques in $\absorber$ containing a  neighbour of $x$. 

\begin{lemma}
	\label{l:6.13chr}
	For $1 \gg \delta \gg 1/n \gg 0$, if $G$ is an $n$-vertex graph such that each vertex is in at least $\delta n^{g-1}$ copies of $K_g$, then $G$ contains a  $(\delta, g)$-absorber.
\end{lemma}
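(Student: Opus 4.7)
The proof uses a straightforward probabilistic construction. Sample a subset $V_0 \subseteq V(G)$ by including each vertex independently with probability $q = \delta/2$, and take $\absorber$ to be the collection of all copies of $K_g$ in $G$ that lie entirely in $V_0$. Then $V(\absorber) \subseteq V_0$, so Chernoff's inequality gives $|V(\absorber)| \leq |V_0| \leq \delta n$ with probability $1 - o(1)$.

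For each $x \in V(G)$, set $Y_x := |\{K \in \absorber : V(K) \cap N_G(x) \neq \varnothing\}|$, and let $C_x$ be the family of $K_g$-copies of $G$ meeting $N_G(x)$. Each $K_g$-copy through $x$ lies in $C_x$, so $|C_x| \geq \delta n^{g-1}$; moreover, a double count yields the stronger bound $|C_x| \geq c_g \delta^{1+1/(g-1)} n^g/g$ for some constant $c_g > 0$ (obtained by summing over $v \in N_G(x)$ the $\geq \delta n^{g-1}$ copies of $K_g$ through each $v$, dividing by $g$ for overcounting, and using $|N_G(x)| \geq c_g \delta^{1/(g-1)} n$, which follows from the fact that $N_G(x)$ contains $\geq \delta n^{g-1}$ copies of $K_{g-1}$). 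Consequently $\E[Y_x] = q^g |C_x|$ comfortably exceeds the threshold $\delta^2 n/(24 g^2)$ in the regime $\delta \gg 1/n$ with $n$ sufficiently large.

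To promote the expectation bound to a pointwise estimate over all $x$, I would invoke the Azuma--Hoeffding inequality: $Y_x$ is a function of the $n$ independent Bernoulli indicators $\mathds{1}[v \in V_0]$, and flipping any single indicator changes $Y_x$ by at most $n^{g-1}$, the maximum number of $K_g$-copies through a fixed vertex. Therefore
\[
\Pr\bigl[Y_x < \tfrac{1}{2}\E[Y_x]\bigr] \;\leq\; 2 \exp\bigl(-\E[Y_x]^2 / (2 n^{2g-1})\bigr),
\]
which is $o(1/n)$ in our regime. A union bound over $x \in V(G)$ then gives $Y_x \geq \delta^2 n/(24 g^2)$ for every $x$ with probability $1 - o(1)$, and combining with the Chernoff bound on $|V_0|$ produces the desired absorber with positive probability.

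The main obstacle is the clean derivation of the double-count lower bound on $|C_x|$ (and the accompanying bound on $|N_G(x)|$), since the hypothesis only directly controls the number of $K_g$-copies through each vertex rather than the neighbourhood structure; once these are in hand, the probabilistic argument is a routine application of bounded differences.
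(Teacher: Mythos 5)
Your probabilistic estimates (the double count for $|C_x|$, the bound $|N_G(x)| \geq \Omega_g\bigl(\delta^{1/(g-1)}\bigr)n$, Chernoff for $|V_0|$, and bounded differences for $Y_x$) are essentially sound, but the construction misses a property that the lemma is really required to deliver: the cliques of the absorber must be pairwise vertex-disjoint. The definition's wording does not spell this out, but it is how the lemma is used -- in Lemma~\ref{l:6.14chr} the absorber has to be a sub-collection of a collection $\mathcal{C}$ of \emph{vertex-disjoint} copies covering almost all of $G$, and in the proof of Lemma~\ref{l:st-reduced} disjointness is exactly what converts ``at least $\tfrac{\delta^2}{24g^2}n$ cliques of $\absorber$ containing a neighbour of $x$'' into that many \emph{distinct} neighbours of $x$ (distinct regular pairs in the reduced graph). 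Correspondingly, the paper's own proof ends with a clean-up step making the sampled family pairwise disjoint. Your absorber -- \emph{all} $K_g$-copies inside a random vertex set $V_0$ of density $\delta/2$ -- is as far from disjoint as possible: it has $\Theta_\delta(n^g)$ cliques, each vertex of $V_0$ lies (in expectation) in $\Theta_\delta(n^{g-1})$ of them, and there are $\Theta_\delta(n^{2g-1})$ intersecting pairs, so the repair of deleting one clique from each intersecting pair is hopeless, and extracting a disjoint subfamily that still meets every neighbourhood $\tfrac{\delta^2}{24g^2}n$ times would need a genuinely new argument. As it stands, the proposal does not prove the lemma in the form the paper needs.

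For comparison, the paper samples \emph{cliques} rather than vertices, and this sparser sampling is precisely what makes disjointness attainable. It fixes for each vertex $v$ a family $\cK_v$ of $\delta n^{g-1}$ copies of $K_g$ through $v$, keeps each member of $\cK=\bigcup_v\cK_v$ independently with probability $\Theta_\delta(n^{1-g})$, and then: Chebyshev gives $|\cA'| = \Theta_\delta(n)$, hence at most $\delta n$ covered vertices; a double count very similar to yours (using only $\delta(G)\geq \delta n$) plus Chernoff and a union bound gives at least $\tfrac{\delta^2}{3g^2}n$ cliques of $\cA'$ meeting each $N_G(x)$; and, crucially, the expected number of intersecting pairs in $\cA'$ is only $O\bigl(\tfrac{\delta^2}{g^2}n\bigr)$, so by Markov one may delete one clique from each intersecting pair, losing at most $\tfrac{7\delta^2}{24g^2}n$ cliques per vertex constraint -- which is where the constant $\tfrac{\delta^2}{24g^2}$ in the definition comes from. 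If you want to rescue your vertex-sampling approach, you would have to add an analogous mechanism for producing a disjoint family (e.g.\ sample far fewer cliques inside $V_0$, at which point you have essentially reproduced the paper's argument).
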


\begin{proof}
    For each vertex $v\in V(G)$ choose a family $\cK_v$ of $\delta n^{g-1}$ copies of $K_g$ containing $v$. Let $\cK = \bigcup_{v}\cK_v$ and $\alpha$ be such that $|\cK|=\alpha n^g$, with $\frac{\delta}{g}\leq\alpha\leq \delta$. Next, let $\cA'$ be a random subfamily of $\cK$ containing each element of $\cK$ independently with probability $\frac{\delta}{2\alpha g}n^{1-g} \leq \frac12 n^{1-g}$. Then $ \E(|\cA'|) = \frac{\delta}{2g}n$ and, by Chebyshev's inequality, with probability $1-o(1)$ we have $\frac{\delta}{4g}n \leq |\cA'| \leq \frac{\delta}{g}n$, and the copies of $K_g$ in $\cA'$ cover at most $\delta n$ vertices. 
    
    Next, notice that, since each vertex belongs to at least $\delta n^{g-1}$ copies of $K_g$, we have $\delta(G)\geq \delta n$. Therefore, there are at least $\delta n \cdot \frac{\delta}{g}n^{g-1} = \frac{\delta^2}{g}n^g$ unique copies of $K_g$ containing at least one neighbour of $v$. Denote by $\cK_{N(v)}$ any such family of $K_g$-copies of size exactly $\frac{\delta^2}{g}n^g$, let $\cA'_{N(v)}=\cA' \cap \cK_{N(v)}$ and note that $\cA'_{N(v)}$ is binomially distributed with parameters $\frac{\delta^2}{g}n^g$ and $\frac{\delta}{2\alpha g}n^{1-g}$. We have $\E(|\cA'_{N(v)}|) = \frac{\delta^3}{2\alpha g^2}n \geq \frac{\delta^2}{2g^2}n$. By Chernoff bounds, the probability that $\cA'_{N(v)} < \frac{\delta^2}{3g^2}n$ is at most $e^{-\beta n}$ for a proper constant $\beta=\beta(g,\delta)$. Thus, taking the union bound over all the vertices of $G$, with probability $1-o(1)$ for each $v\in V(G)$ there are at least $\frac{\delta^2}{3g^2}n$ copies of $K_g$ in $\cA'$ containing a neighbour of $v$.
    
    Next, we want to make $\cA'$ pairwise disjoint. Let $Z$ denote the number of intersecting pairs in $\cA'$ and note that $\E Z \leq n (\delta n^{g-1})^2 \left(\frac{\delta}{2\alpha g} n ^{1-g}\right)^2 = \frac{\delta^4}{4\alpha^2g^2}n\leq \frac{\delta^2}{4g^2}n$. Therefore, by Markov's inequality, with probability bounded away from 0, the number of intersecting pairs does not exceed $\frac{7\delta^2}{24g^2} n$. Let us remove one copy of $K_g$ from each intersecting pair and denote by $\cA$ the remaining family. Then, with positive probability, $\cA$ satisfies the definition of a $(\delta,g)$-absorber, hence we can find in $G$ the desired absorber.
\end{proof}

Let us now find the desired partial factor which includes an absorber. Note that in case $s>r/2$, we can find an independent set in (i), but the slightly weaker conclusion is necessary in the singular case.

\begin{lemma}
	\label{l:6.14chr} Let $s \geq r/2$. There is a constant $C$ such that for $\gamma \gg \delta \gg 1/n $, if $G$ is an $n$-vertex graph with minimum degree $\delta(G) \geq \left(1- \frac sr - \delta \right)n$, then 
    \begin{itemize}
	   \item[(i)] there is a subset $X \subset V(G)$ with $|X| \geq \left(\frac sr - \gamma \right)n$ containing at most $\delta n^2$ edges, or
	   \item[(ii)] if $s > r/2$, then there is a collection $\mathcal{C}$ of vertex-disjoint copies of $K_{2}$ and $\Qone$ in $G$, covering all but at most $C$ of its vertices, such that $\mathcal{C}$ contains a $\left( \frac{\delta}{2}, 2\right)$-absorber $\absorber$, or
	   \item[(iii)] if $ s = r/2$, then there is a collection $\mathcal{C}$ of vertex-disjoint copies of $K_3$ and $K_2$ in $G$, covering all but at most $C$ of its vertices, such that  $\mathcal{C}$ contains a $\left(\frac \delta 2, 3\right)$-absorber $\absorber$.
    \end{itemize}
\end{lemma}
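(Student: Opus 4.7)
The plan is to prove Lemma~\ref{l:6.14chr} in two stages: first extract a small absorber via Lemma~\ref{l:6.13chr}, then cover what remains by applying Lemma~\ref{l:6.2chr} to $G$ with the absorber vertices removed.

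For case~(ii), where $s > r/2$ and hence $1 - s/r \geq 1/r$, the hypothesis together with $\delta \ll 1$ gives $\delta(G) \geq (\delta/2)n$, so every vertex lies in at least $(\delta/2)n$ edges; Lemma~\ref{l:6.13chr} with $g = 2$ and parameter $\delta/2$ directly yields a $(\delta/2, 2)$-absorber $\absorber$ covering at most $(\delta/2)n$ vertices, realised as a set of vertex-disjoint edges. Case~(iii), with $s = r/2$, is more delicate because the minimum degree $(1/2 - \delta)n$ does not by itself guarantee triangles through each vertex. Here I would first test whether some vertex $x$ has fewer than $(\delta/2)n^2$ triangles through it. If so, then $G[N(x)]$ contains at most $\delta n^2$ edges and $|N(x)| \geq (1/2 - \delta)n \geq (1/2 - \gamma)n$ (using $\gamma \gg \delta$), so any subset of $N(x)$ of size $\lceil (1/2 - \gamma)n \rceil$ furnishes the set $X$ demanded by~(i). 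Otherwise every vertex lies in at least $(\delta/2)n^2$ triangles, and Lemma~\ref{l:6.13chr} with $g = 3$ and parameter $\delta/2$ produces a $(\delta/2, 3)$-absorber $\absorber$ consisting of vertex-disjoint triangles on at most $(\delta/2)n$ vertices.

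Set $G' := G - V(\absorber)$, an $n'$-vertex graph with $n' \geq (1 - \delta/2)n$ and $\delta(G') \geq (1 - s/r - 3\delta/2)n \geq (1 - s/r - 3\delta/2)n'$. Apply Lemma~\ref{l:6.2chr} to $G'$ with $3\delta/2$ in place of $\delta$; this returns either a covering of $G'$ by vertex-disjoint copies of $K_2$ and $\Qone$ omitting at most $C$ vertices, or an independent set $Y \subset V(G')$ of size at least $(s/r - 15 sr\delta)n'$. In the former case, $\mathcal{C} := \absorber \cup \{\text{this covering}\}$ is the desired collection; in the singular setting $s = t$ the graph $\Qone$ is a perfect matching and its copies therefore decompose into $K_2$-copies, so $\mathcal{C}$ consists precisely of $K_2$'s and $K_3$'s as required by~(iii). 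In the latter case, $|Y| \geq (s/r - 15sr\delta)(1 - \delta/2)n \geq (s/r - 16sr\delta)n \geq (s/r - \gamma)n$ by the hierarchy, and $Y$ is independent, so conclusion~(i) holds.

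The main obstacle is case~(iii): since the minimum degree does not directly supply triangles, the argument rests on the trichotomy that either a single vertex's neighbourhood is itself the near-independent set sought by~(i), or else every vertex is triangle-rich and Lemma~\ref{l:6.13chr} applies. Beyond this point the remainder is bookkeeping --- one verifies that deleting the absorber erodes the minimum degree only by an additive $(\delta/2)n$, so that Lemma~\ref{l:6.2chr} may be re-invoked with a parameter inflated by a constant factor, and that the resulting slack is absorbed by the hypothesis $\gamma \gg \delta$.
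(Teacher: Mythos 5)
Your proposal is correct and follows essentially the same route as the paper: extract a $(\delta/2,g)$-absorber via Lemma~\ref{l:6.13chr} (with $g=2$ when $s>r/2$, and $g=3$ when $s=r/2$, where your triangle-count dichotomy is just the contrapositive of the paper's ``assume (i) fails, so every neighbourhood spans many edges''), then apply Lemma~\ref{l:6.2chr} to the remainder and convert its independent-set outcome into conclusion~(i). Your bookkeeping of the degraded minimum degree after deleting the absorber (invoking Lemma~\ref{l:6.2chr} with $3\delta/2$) is, if anything, slightly more careful than the paper's terse treatment.
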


\begin{proof}
For (ii), find a $\left( \frac{\delta}{2}, 2\right)$-absorber using Lemma~\ref{l:6.13chr} and then apply Lemma~\ref{l:6.2chr} to the rest of the graph.

For (iii), assume that conclusion (i) does not hold. Then each vertex is in at least $\delta n^2$ triangles since its neighbourhood contains at least $\delta n^2 $ edges. Hence we can find a $\left(\frac \delta 2, 3\right)$-absorber $\absorber$ using  Lemma~\ref{l:6.13chr} and remove it from the graph. The remaining graph still satisfies the hypothesis of Lemma~\ref{l:6.2chr} with $\delta_{\ref{l:6.2chr}} = \delta/2$, so we may apply the Lemma, and recall that $\Qone$ is simply a matching of size $s$.
\end{proof}

When $t<s$, the graph $\Qone$ will be used in the reduced graph as an auxiliary object used to obtain a set of regular pairs $(S_i, T_i)$ with $|S_i|/|T_i|\sim s/t$. To formalise this, we will use the concept of weighted graphs (with weighted vertices) and their embeddings.

A \emph{weighted graph} is a pair $(F, w_F)$, where $F$ is a graph and $w_F: V(F) \to \R_{\geq 0}$ is a~weight function. Any graph $F$ can be viewed as a weighted graph with $w_F \equiv 1$. 
For a~family of weighted graphs $\cF$, an \emph{$\cF$-packing} in $G$ is a collection of graphs $(F_i)_{i \in I}$, where $F_i \in \cF$, with corresponding embeddings $(f_i)_{i \in I}$ into $G$, such that for $x \in V(G)$, 
$$\sum_{i \in I: Im(f_i) \ni x} w_{F_i}(f_i^{-1}(x)) \leq 1.$$
In other words, for each $x \in V(G)$ the total weight in $(F_i)_{i\in I}$ of all vertices mapped to $x$ is at most 1.

The \textit{residue} of a packing is defined as $|V(G)|- \sum_{i \in I}\sum_{v \in V(F_i)}w_{F_i}(v)$. Let $K_2^*$ denote the weighted graph consisting of two vertices $\sigma$ and $\tau$ with an edge $\sigma \tau$, and vertex weights $w(\sigma) = \frac{1}{t(s+t)}$ and $w(\tau) = \frac{1}{s(s+t)}$ (so that $w(\sigma)/w(\tau) = s/t$).

For example, the graph $K_2$ with vertices $x$ and $y$ (with vertex weights 1) has a $K_2^{*}$-packing consisting of $2st$ copies of $K_2^*$: $st$ copies $F_i$ with $f_i(\sigma) =x$ and $f_i(\tau) = y$, and $st$ copies $F_j$ with $f_j(\tau) = x  $ and $ f_j(\sigma) = y$. The residue of this packing is 0. We will use the following simple lemma establishing the same statement for $\Qone$.

\begin{lemma}
	\label{l:k2-packing}
	The graph $\Qone$ has a $K_2^*$-packing with residue 0 such that for each copy of $K_2^*$ in the packing, its vertex $\tau$ is mapped into the $M$-part of $\Qone$.
\end{lemma}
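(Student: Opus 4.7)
The plan is to construct the packing explicitly. Since $\tau$ must be mapped into $M$ and $M$ is an independent set in $\Qone$ (it has no internal edges), the vertex $\sigma$ of every copy must be mapped to an $M$-neighbour lying outside $M$, i.e.\ to a vertex of $L$ or to the matching partner in $N$. So the only useful edges are the $L$--$M$ edges and the $M$--$N$ matching edges, and on each such edge we only need to decide how many copies of $K_2^*$ to pile on it.

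Let $a$ denote the number of copies of $K_2^*$ placed on each $L$--$M$ edge (with $\tau$ mapped to the $M$-endpoint), and let $b$ denote the number of copies placed on each matching edge (again with $\tau$ mapped to the $M$-endpoint). The residue-zero condition amounts to three linear equations: each $\ell \in L$ receives total $\sigma$-weight $ta\cdot \tfrac{1}{t(s+t)} = 1$, each $n\in N$ receives total $\sigma$-weight $b \cdot \tfrac{1}{t(s+t)} = 1$, and each $m \in M$ receives total $\tau$-weight $\bigl((s-t)a + b\bigr)\cdot\tfrac{1}{s(s+t)} = 1$. Solving, I would set $a = s+t$ and $b = t(s+t)$, and then verify $(s-t)(s+t) + t(s+t) = s(s+t)$, which makes the third equation automatic.

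It remains to check that this data is indeed a valid $K_2^*$-packing, i.e.\ that the vertex weights sum to at most $1$ (in fact, they sum to exactly $1$ by construction, yielding residue $0$). I would conclude by noting that the total residue is
\[
|V(\Qone)| - \bigl[t(s-t)(s+t) + t\cdot t(s+t)\bigr]\cdot\left(\tfrac{1}{t(s+t)}+\tfrac{1}{s(s+t)}\right) = (s+t) - st(s+t)\cdot\tfrac{s+t}{st(s+t)} = 0,
\]
and that every copy in the packing has $\tau$ mapped into $M$ by construction. There is no real obstacle here — the only thing to be careful about is the observation that $\sigma$ cannot also be placed in $M$, which follows at once because $M$ is independent in $\Qone$ and $\sigma\tau$ must be an edge.
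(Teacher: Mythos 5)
Your proposal is correct and yields exactly the same packing as the paper's proof: $s+t$ copies of $K_2^*$ on each $L$--$M$ edge and $t(s+t)$ copies on each matching edge, with $\tau$ always at the $M$-endpoint (the paper merely presents this by first packing the rescaled graph $(s+t)\ltimes K_2^*$ and then splitting each copy, which is only a presentational difference from your direct solution of the linear system). No gaps; the verification that each vertex receives total weight exactly $1$ is the whole content, and you carry it out correctly.
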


\begin{proof}
	We begin by enumerating the vertices of $Q$ as $L = \{\ell_1, \dots, \ell_{s-t}\}$, $M  = \{m_1, \ldots, m_t \}$ and $N = \{n_1, \ldots, n_t \}$ so that each $m_i$ is matched to $n_i$ in $Q$. Let $(s+t) \ltimes K_2^*$ be the weighted graph on $\{\sigma, \tau\}$ with weight function $w'(\sigma) = 1/t$ and $w'(\tau)  = 1/s$. We will actually exhibit an $(s+t) \ltimes K_2^*$-packing of $Q$, which can easily be turned into a $K_2^*$-packing by partitioning each $(s+t) \ltimes K_2^*$-copy into $(s+t)$ copies of $K_2^*$. 
	
	First for each $j \in [t]$, we embed $t$ copies of $(s+t)\ltimes K_2^*$ by mapping $\sigma$ to $n_j$ and $\tau$ to $m_j$. Hence the total weight mapped to $n_j$ is 1, and to $m_j$ is $\frac{t}{s}$.
	
	Then, consider the vertex $\ell_1$. Embed $t$ copies of $(s+t)\ltimes K_2^*$ by mapping $\sigma$ to $\ell_1$ and $\tau$ to $m_j$, for $j \in [t]$. The total weight mapped to $\ell_1$ is now also 1. Repeat the same for $\ell_2, \ldots, \ell_{s-t}$.  
 
    Finally, notice that for $j\in[t]$, the total weight mapped to each vertex $m_j$ is
	$\frac {t}{s}  + \frac {s-t}{s} = 1.$ This completes the proof.
\end{proof}

\section{Graphs with no large almost independent sets}  	
\label{sec:non-extremal}

In this section we will prove Lemma~\ref{l:non-extremal}.

\subsection{The non-singular case $s>t$}

We start with an outline of the proof strategy. Recall the notions of the reduced graph and regular partitions from Section~\ref{sec:prelim-reg}.

\begin{description}
    \item[Step 1] Apply the Regularity lemma and form an $(\eps, \delta/10)$-reduced graph $R$ for appropriate $\eps, \delta$. 
    \item[Step 2] Recall the assumptions of Lemma~\ref{l:non-extremal}.  They imply that (for appropriate $\delta$), the reduced graph $R$ has minimum degree $(1-s/r - 2\delta)|V(R)|$ and no independent set of size $(1- s/r - \gamma)|V(R)|$.
    \item[Step 3] Use Lemma~\ref{l:6.14chr} and Lemma~\ref{l:k2-packing} to find a matching $\{(S_i, T_i): i \in [\ell]\}$ in the reduced graph $R$ which covers all but at most $C= C(s, t)$ vertices of $R$, and such that
    \begin{align}\label{eq:sizes}
        \frac{|S_i|}{|T_i|} =\frac st \text{ \quad for } i \in [\ell].
    \end{align}
    In other words, each $(S_i, T_i)$ is a dense super-regular pair, and we do not know anything about the rest of $R$.
\end{description}

Now we start introducing random edges. 

\begin{description}
    \item[Step 4] Cover $V_0$ with $K_r$-copies (in $G \cup \Gnp$).
    \item[Step 5] Ensure that the parts are of required size for an application of Lemma~\ref{c:st-reg-pairs-factors}.
    \item[Step 6] Apply Lemma~\ref{c:st-reg-pairs-factors} to cover the remaining regular pairs.
\end{description}

\medskip
 The following lemma corresponds to steps 1--3 from the above outline. 

\begin{lemma}
	\label{l:st-reduced}
	Let $t$ and $s$ be positive integers with $t< s$, let $1/t \gg \gamma \gg d>0$, and let $\eps>0$ sufficiently small be given. There is $L = L(\gamma, \eps)$ such that the following holds for sufficiently large $n$. Let $G$ satisfy the assumption of Lemma~\ref{l:non-extremal}, that is $\delta(G) \geq \left(1-\frac {s}{r}\right)n$ and every vertex subset of size $sn/r$ contains at least $\gamma n^2$ edges in $G$. There is a collection $(S_i, T_i)_{i \in [\ell]}$ of $(\eps, d', d)$-super-regular pairs  with $\ell \leq L$, $d' \geq d$, and such that the following holds.
	\begin{enumerate} 
		\item \label{it:ST-sizes} For every $i \in [\ell]$, 
		$t|S_i| = s|T_i|$ and $|S_i \cup T_i| = \frac{n}{\ell}(1\pm \eps^{1/2})$.
		\item \label{it:S-neighbours} Every vertex $x \in V(G)$ has at least $dn$ neighbours in $\bigcup_{i \in [\ell]}S_i$.
		\item \label{it:S-reduced} For every set $S_i$, there are at least $d\ell$ sets $S'$ in $\{S_1, \ldots, S_\ell\}$ such that $(S_i, S')$ is an $(\eps,d', d)$-super-regular pair with density $d' \geq d$.
	\end{enumerate}
\end{lemma}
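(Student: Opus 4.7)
The plan is to apply the Regularity Lemma to $G$, apply Lemma~\ref{l:6.14chr} to the resulting reduced graph to obtain a near-spanning collection $\mathcal{C}$ of $K_2$- and $Q$-copies together with an absorber, and then convert $\mathcal{C}$ into the required family of super-regular pairs via the $K_2^*$-packing of Lemma~\ref{l:k2-packing}.

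First, apply Lemma~\ref{l:regularity} with parameters $\eps_0, d_0 \ll \gamma$, producing an $\eps_0$-regular partition $V_0 \dcup V_1 \dcup \dots \dcup V_{\ell_0}$ and its $(\eps_0, d_0)$-reduced graph $R$ on $[\ell_0]$ with $\delta(R) \geq (t/r - d_0 - 2\eps_0)\ell_0$. Apply Lemma~\ref{l:6.14chr} to $R$ with parameters $\delta_R, \gamma_R \ll \gamma$. Conclusion~(i) would yield a set $X \subseteq V(R)$ with $|X| \geq (s/r - \gamma_R)\ell_0$ and $e_R(X) \leq \delta_R \ell_0^2$; lifting $\bigcup_{u \in X} V_u$ to a set $Y' \subseteq V(G)$ of size exactly $sn/r$ (by adding at most $(\gamma_R + \eps_0)n$ arbitrary vertices) and summing the contributions of $R[X]$-edges, irregular pairs, low-density pairs, and the extension vertices would bound $e_G(Y')$ by roughly $(\delta_R + \eps_0 + d_0 + \gamma_R) n^2 < \gamma n^2$, contradicting the non-extremal hypothesis. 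Hence conclusion~(ii) holds: a collection $\mathcal{C}$ of vertex-disjoint $K_2$- and $Q$-copies in $R$ covering all but $C_0$ vertices, with a $(\delta_R/2, 2)$-absorber $\mathcal{A} \subseteq \mathcal{C}$.

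I then convert $\mathcal{C}$ into the pairs $(S_i, T_i)$. Using Lemma~\ref{l:super-reg-min}, each $R$-edge participating in $\mathcal{C}$ is replaced by a super-regular sub-pair, at the cost of moving a negligible fraction of vertices into $V_0$. For each $K_2$-edge $(u, v) \in \mathcal{C}$ I apply the symmetric $K_2^*$-packing of $K_2$ (with $st$ copies having $\sigma \mapsto u, \tau \mapsto v$ and $st$ copies with the reverse roles), partitioning $V_u$ and $V_v$ into pieces that assemble into $2st$ pairs $(S_i, T_i)$ with $|S_i|/|T_i| = s/t$. For each $Q$-copy in $\mathcal{C}$ I apply the packing of Lemma~\ref{l:k2-packing} directly to obtain $str$ further such pairs. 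This yields $\ell = st\ell_0 (1 - o(1))$ pairs with $|S_i \cup T_i| = |V_u|/(st) = (n/\ell)(1 \pm \eps^{1/2})$, establishing~(\ref{it:ST-sizes}). Condition~(\ref{it:S-reduced}) follows because each cluster $V_a$ has $\geq (t/r - o(1))\ell_0 \gg d\ell$ many $R$-neighbours, each supplying a bounded number of super-regular $S$-partners of any $S_i \subseteq V_a$ after slicing (Lemma~\ref{l:slicing}) and another application of Lemma~\ref{l:super-reg-min}.

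Property~(\ref{it:S-neighbours}) is the crux and the place where the absorber $\mathcal{A}$ is used. For $x$ in a cluster $V_a$ with $a \in V(\mathcal{C})$, the absorber supplies $\Omega(\delta_R^2 \ell_0)$ $K_2$-edges $(u, v) \in \mathcal{A}$ with (say) $u \in N_R(a)$; since $u$ lies in a $K_2$-edge of $\mathcal{C}$, $V_u$ contains an $S$-piece of size $\frac{s}{s+t}|V_u|$, and super-regularity of $(V_a, V_u)$ combined with slicing forces $x$ to have a linear fraction of $|V_u|$ neighbours there, which when summed over the absorber edges yields $\geq dn$ neighbours in $\bigcup S_i$. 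The main obstacle is extending this bound to the $o(n)$ leftover vertices (those in $V_0$ and in the $C_0$ uncovered clusters) without violating~(\ref{it:ST-sizes}): the plan is to additionally randomise the $S$/$T$ labelling of pieces coming from $K_2$-edges of $\mathcal{C}$ so that Chernoff bounds and a union bound force a positive fraction of any leftover $x$'s $\geq (t/r - o(1))n$ cluster-neighbours to land in $\bigcup S_i$, with the pathological configuration (a leftover $x$ whose neighbourhood lies almost entirely in the $M$-clusters of $Q$-copies) ruled out by applying the non-extremal hypothesis to the corresponding non-neighbourhood $N_G(x)^c$.
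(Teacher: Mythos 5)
Your overall skeleton (Regularity Lemma, ruling out conclusion~(i) of Lemma~\ref{l:6.14chr} via the non-extremal hypothesis, taking the $K_2$/$\Qone$-cover with an absorber, converting it into $(S_i,T_i)$-pairs through the $K_2^*$-packing of Lemma~\ref{l:k2-packing}, and randomising the $S$/$T$-split) coincides with the paper's proof. The genuine gap is in your treatment of property~(\ref{it:S-neighbours}), which is exactly the crux. First, your mechanism for vertices in covered clusters relies on $(V_a,V_u)$ being super-regular for every absorber edge containing an $R$-neighbour $u$ of $a$; but these pairs are only $\eps$-regular (an edge of $R$), and Lemma~\ref{l:super-reg-min} cannot create super-regularity from regularity, nor can one super-regularise each cluster against all $\Omega(\delta^2\ell)$ of its absorber partners simultaneously. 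In a merely regular pair an individual vertex $x\in V_a$ may have no neighbours at all in $V_u$, so this argument gives the degree bound only for \emph{most} vertices of $V_a$, not for every vertex as the lemma requires. Second, your fallback for the exceptional/leftover vertices does not work: if $N_G(x)$ sits almost entirely inside the $M$-clusters, applying the hypothesis ``every set of size $sn/r$ spans at least $\gamma n^2$ edges'' to $V(G)\setminus N_G(x)$ produces no contradiction — that set having many edges is simply what the hypothesis asserts, and nothing about the cover rules out such an $x$ a priori. Note also that the minimum degree of the reduced graph, $(\tfrac tr-2\delta)\ell$, is \emph{not} enough for your claim in~(\ref{it:S-reduced}) either, since without further information the $M$-clusters could occupy up to a $\tfrac tr$-fraction of the covered clusters and swallow a cluster's entire $R$-neighbourhood.

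The missing idea, which the paper uses and for which the absorber is really there, is a global counting bound on the $M$-clusters: since the $(\delta,2)$-absorber consists of at least $\delta^2\ell''$ copies of $K_2$ (which contribute no $M$-parts), one gets $|\cM|\le\frac{t}{s+t}(1-2\delta^2)\ell''$, hence $\bigl|\bigcup_{i\in\cM\cup\{0\}}V_i\bigr|\le(\tfrac tr-\delta^2)n+2\eps n<\delta(G)$. Consequently \emph{every} vertex of $G$ — including leftover vertices and those discarded along the way — automatically has at least $\delta^2 n/2$ neighbours in clusters hosting $\sigma$-pieces, and the random $S$/$T$-split (which you do propose, in line with the paper) retains a constant fraction of these, giving~(\ref{it:S-neighbours}) with no case distinction at all. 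Similarly, (\ref{it:S-reduced}) is derived in the paper from the absorbing property itself (every cluster has $\Omega(\delta^2\ell)$ regular absorber partners, all of which host $S$-pieces), not from $\delta(R)$. Without this counting step your proof of items~(\ref{it:S-neighbours}) and~(\ref{it:S-reduced}) does not go through.
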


\begin{proof}
	Given $\gamma>0$, let $\delta \ll \gamma$ 
    be sufficiently small for Lemma~\ref{l:6.14chr} to hold with $\gamma_{\ref{l:6.14chr}}=\gamma/4$, and let { $d_1 = \delta /10$}. We will show that the conclusion holds with $\sqrt{\eps}$ in place of $\eps$, and we may assume that $\eps \ll \min\{\delta^8, d_1^3\}$. Apply the Regularity Lemma (Lemma~\ref{l:regularity}) to $G$ with constants $\eps$, $d_1$ and $\ell_0 > \max\{8 \gamma^{-1},\eps^{-2}\}$. Let $V_0 \cup \dots \cup V_{\ell'}$ be the obtained $\eps$-regular partition, and $R$ its $(\eps, d_1)$-reduced graph. Then $R$ satisfies $\delta(R) \geq \left(1-\frac sr-2 \delta \right)\ell'$.
	
	We also claim that $R$ does not contain a vertex set $I$ of size at least $\left(\frac sr - \frac{\gamma}{4} \right)\ell'$ with $e_R(I) \leq  2\delta (\ell')^2$ edges. Assume the opposite, and let $I$ be such a set. Let $V_I$ be the union of the corresponding parts of the regular partition of $G$. We have
	$$e(G[V_I]) \leq \binom n2 (4 \delta + d_1 + 2\eps + \ell_0^{-1}) \leq \gamma n^2/4,$$
	since $G[V_I]$ can contain  at most $2 \delta n^2$ edges inherited from the edges of $R$, along with the edges in  pairs $(V_i, V_j)$ which are not $(\eps, d_1^+)$-regular, and edges inside the parts. Moreover, since $|V_i| \geq \frac {n}{ \ell'}(1- 2\eps)$ for $i \in [\ell']$, we have 
    $$|V_I| \geq \left(\frac sr  - \frac{\gamma}{4}\right)(1- 2 \eps)n \geq \left(\frac sr  - \frac{\gamma}{3}\right)n.$$ If $V_I'$ is an arbitrary vertex set of size $sn/r$ containing $V_I$, we have $e_G(V_I') \leq e_G(V_I) + \gamma n^2/3 <\gamma n^2$, which contradicts the assumption on $G$.
	
	Hence, applying Lemma~\ref{l:6.14chr} to $R$ with $\delta_{\ref{l:6.14chr}} = 2\delta$ and $\gamma_{\ref{l:6.14chr}} = \gamma/4$, we arrive at conclusion~(ii) of the Lemma. Let $\cC$ be the obtained collection of $K_2$-copies and $\Qone$-copies (including a~$(\delta, 2)$-absorber $\absorber$), and we may assume that the vertex set of $\cC$ is $V_1, \ldots, V_{\ell''}$, with $\ell'-\ell'' \leq C$. Redefine $V_0 := V_0 \cup V_{\ell''+1} \cup \dots \cup V_{\ell'}$ to be the new leftover set, with $|V_0| \leq \eps n + 2Cn/ \ell' \leq 2\eps n$. Moreover, let $\cM \subset [\ell'']$ be the set of indices $i$ such that the part $V_i$ belongs to the $M$-part in one of the copies of $\Qone$ in $\cC$. Note that since (by the proof of Lemma~\ref{l:6.13chr}) the absorber $\absorber$ contains at least $\delta^2 \ell'' $ copies of $K_2$, and the $M$-part contains a $\frac{t}{s+t}$-proportion of vertices of each copy of $\Qone$, we have 
	\begin{equation}
		\label{eq:tau-estimate}
		|\cM| \leq \frac{t}{s+t}\left(1-2 \delta^2\right)\ell''. 
	\end{equation}
	Hence, for each vertex $x \in V(G)$,
    \begin{equation}
        \label{eq:N(x)-estimate}
        \left|N_G(x) \cap \bigcup_{i \in \cM \cup\{0\}}V_i\right|
	   \leq \left(\frac{t}{s+t} - {\delta^2} \right)n + 2 \eps n.
    \end{equation}
	Since $\eps < \delta^3 < \delta^2/100$ and $|N_G(x)| \geq \frac{nt}{s+t}$, we conclude that $x$ has at least $\delta^2 n / 2$ neighbours outside the above-mentioned set $\bigcup_{i \in \cM \cup\{0\}}V_i$.
	
	Recalling that $\cC$ contains a $(\delta, 2)$-absorber, we obtain the additional property that  for each $i \in [\ell'']$, there are at least $\delta^2 \ell''/96$ indices $j \in [\ell'']$ such that the pair $(V_i, V_j)$ is $(\eps, d_1)$-regular.
	
	Now we will turn the collection $\cC$ into an $K_2^*$-packing.
	Namely, each copy of $K_2$ and $\Qone$ has a $K_2^*$-packing with residue 0 (by Lemma~\ref{l:k2-packing}). This yields a $K_2^*$-packing in $R$, which can be described using a collection of vertex sets $(\{\sigma_i, \tau_i\})_{i \in [\ell]}$  of 
    \[\ell = st\ell''\] 
    copies of $K_2^*$, and a function $f: \{\sigma_i: i \in [\ell]\} \cup \{\tau_i : i \in [\ell]\} \to \{V_j: j \in [\ell'']\}$ such that each $f(\sigma_i) f(\tau_i)$ is an edge in $R$; note that we are using $st\ell''$ copies of $K_2^*$ since the total weight of their vertices is exactly $st\ell''\left(\frac{1}{s(s+t)}+\frac{1}{t(s+t)}\right)=\ell''=|V(\cC)|$.
	
	In covering the graph $G$, there will be more flexibility in how the vertices from the `heavier' parts $f(\sigma_i)$ are used, so we note the following additional properties of the packing~$f$. Firstly, each vertex $x \in V(G)$ has at least $\delta^2 n/ 8$ neighbours in $\bigcup_{i \in [\ell]}f(\sigma_i)$; this follows from (\ref{eq:N(x)-estimate}) and
	$$\bigcup_{i \in [\ell]}f(\sigma_i) \subset V(G) \setminus \bigcup_{i \in \cM \cup\{0\}}V_i,$$
	which holds by the construction of $f$ as stated in Lemma~\ref{l:k2-packing} (only $\tau_i$'s are mapped to the $M$-parts). Secondly, using the absorbing property, for each $i \in [\ell'']$, there are at least $\delta^2 \ell/96$ indices $j \in [\ell]$ such that the pair $(V_i, f(\sigma_j))$ is $(\eps, d_1)$-regular. 
	
	The function $f$ will now be used to subdivide the parts $(V_i)_{i\in[\ell'']}$ into parts $(S'_j)_{j\in[\ell]}$ and $(T'_j)_{j\in[\ell]}$ forming a family of regular pairs $\{(S_j', T_j'):j\in[\ell]\}$. Recall that the weight function $w$ of $K_2^*$ assigns weights $w(\sigma_j) = \frac{1}{t(s+t)}$ and $w(\tau_j) = \frac{1}{s(s+t)}.$ For each vertex $x \in \bigcup_{i\in[\ell'']} V_i$, we will choose a part $S_j'$ or $T_j'$ independently at random; to this end, for $j \in [\ell]$, let $S_j'$ be a random subset of $f(\sigma_j)$, each vertex selected with probability $\frac{1}{s(s+t)}$. Similarly, let $T_j'$ be a random subset of $f(\tau_j)$, with each vertex selected with probability $\frac{1}{t(s+t)}$. The fact that $f$ is a $K_2^*$-packing ensures that this selection is a probability measure for each $x \in \bigcup_{i\in[\ell'']} V_i$. We assume that for each $j$,
	\begin{equation}
		\label{eq:st-sizes}
		|S_j'| = \frac{1}{t(s+t)}|V_1|(1 \pm \eps^2) \quad \text{and} \quad
		|T_j'| = \frac{1}{s(s+t)}|V_1|(1 \pm \eps^2),
	\end{equation}
	as, by Chernoff bounds, this happens with probability $e^{-\Omega(n)}$. Similarly, we assume that each vertex $x \in V(G)$ has at least $\delta^2 n / 16$ neighbours in $\bigcup_{j \in [\ell]}S_j'$. (We remark that this property is the only property for which a random partition is actually needed).
	Finally, let $R'$ be the graph on the vertex set $\{S_j': j \in [\ell]\} \cup \{T_j': j \in [\ell]\}$ which corresponds to $(2r^2\eps, (d_1/2)^+)$-regular pairs. Using Lemma~\ref{l:slicing} (the \textit{Slicing Lemma}) and $\frac{1}{s(s+t)} \geq \frac {1}{r^2}$, any pair stemming from an edge in $R$ will now be an edge in $R'$.  In particular, $(S_j', T_j')$ is an edge in $R'$ for any $j\in[\ell]$ (with the usual abuse of notation of using ordered pairs to describe a graph), and any vertex of $R'$ has at least $\delta^2 \ell/96$ neighbours in $\{S_j': j \in [\ell]\}$.
		
	It remains to obtain super-regular pairs, for which we use the standard fact that most vertices in a (large subset of a) regular pair have an expected proportion of neighbours, see Lemma 2.8 from~\cite{abcd22}. By discarding at most $\eps^{3/4} n/\ell$ vertices from each part $S_j'$ or $T_j'$, we obtain subsets $S_j'' \subset S_j'$ and $T_j'' \subset T_j'$ such 
	that the edges of the graph $R'$ now correspond to $(2r^2\eps, (d_1/4)^+, d_1/4)$-super-regular pairs.  Secondly, we pass to subsets $S_j \subset S_j''$ and $T_j \subset T_j''$ with $t|S_j| = s|T_j|$ for $j \in [\ell]$, after discarding another $\eps^{3/4} n/\ell$ vertices from the relevant part $S_j''$ or $T_j''$; this does not interfere with super-regularity since $\eps \ll d_1$. We conclude that
	\begin{itemize}
		\item the edges of the graph $R'$ (now on the vertex set $\bigcup_{j \in [\ell]}(\{S_j\} \cup \{T_j\})$) correspond to $(\eps^{1/2}, (d_1/8)^+, d_1/8)$-super-regular pairs, which in particular implies~\ref{it:S-reduced},
		\item $\left|V(G) \setminus \bigcup_{j \in [\ell]}(S_j \cup T_j) \right| \leq \eps^{1/2} n$,
		\item  
		$t|S_j| = s|T_j|$ and $|S_j \cup T_j| = \frac{n}{\ell}(1\pm \eps^{1/2})$ for for all $j \in [\ell]$, so~\ref{it:ST-sizes} holds.
	\end{itemize}
	Since  each vertex $x \in V(G)$ \textit{lost} at most $\eps^{1/2}n$ neighbours in $\bigcup_{j \in [\ell]} S_j'$ and $\eps^{1/2}<\delta^4$, claim~\ref{it:S-neighbours} is also ensured. This completes the proof, with constants $d = \delta^3 = (d_1/8)^3$ and  $\eps^{1/2}$ in place of $\eps$.
\end{proof}

We remark that ensuring $t|S_j| = s|T_j|$ was not necessary at this point as we will still need to cover the leftover vertices by copies of $K_r$ for which we will use only vertices from $\bigcup_{j\in[\ell]}S_j$, but it will make the \textit{balancing step} in the proof of Lemma~\ref{l:non-extremal} slightly simpler. 

In most copies of $K_r$ in the factor, the deterministic edges will come from a copy of $K_{s,t}$ in $G$, and the remaining edges will come from the random graph $\Gnp$. Occasionally, some copies of $K_{s-1, t+1}$ in $G$ will be also used.

After applying Lemma~\ref{l:st-reduced} to get a collection $(S_i,T_i)_{i\in[\ell]}$, we let $X$ to be the set of leftover vertices, that is vertices lying outside the set $\bigcup_{i\in[\ell]}(S_i\cup T_i)$.
First, using Lemma~\ref{l:x-cover}, we will cover the vertices of $X$ with a partial $K_r$-factor, but in doing so, we have to be careful, because we are not allowed to occupy any vertices from $\bigcup_{i \in [\ell]} T_i$ -- this is the purpose of \ref{it:S-neighbours} in Lemma~\ref{l:st-reduced}. For, once $\frac{|S_i|}{|T_i|}> \frac st$, any $K_r$ factor in $G^p[S_i \cup T_i]$ would have to contain a~$K_{s+1}$-copy in $S_i$, which is unlikely.

Let $(S_i', T_i')$ be the super-regular pairs leftover after covering $X$. It remains to find a~partial $K_r$-factor such that the remaining super-regular pairs satisfy~Lemma~\ref{l:st-reduced}~\ref{it:ST-sizes}. This will also be done in two steps: first we will find a collection $\cK$ of $O(\ell)$ vertex-disjoint $K_r$-copies such that $|S_i' \cup T_i' \setminus V(\cK)|$ is divisible by $s+t$. Once the size of a pair $(S_i' \setminus V(\cK), T_i' \setminus V(\cK))$ is \textit{divisible by $s+t$}, it can be balanced internally by pulling out an appropriate number of $K_{s-1,t+1}$-copies. We now state the necessary probabilistic lemma.

\begin{lemma}
	\label{l:divb}
	Let  $s$ and $t' \leq s-1$ be integers, and $\nu >0$. Let $A, B$ be disjoint vertex sets of order $n$, and let $G$ be a bipartite graph on $A \dcup B$ containing at least $\nu n^{s-1 +t'}$ copies of $K_{s-1, t'}$.  Let $W$ be an $n$-vertex set (which may intersect $A$ and $B$). For $V = A\cup B \cup W$, $p = Cn^{-2/s}$ and $C$ sufficiently large, the probability that there is no $K_r$-copy in $G \cup G(V, p)$ with one vertex in $W$, $s-1$ vertices in $A$ and $t'$ vertices in $B$ is at most $e^{-c_s \nu^2  n^2 p}$, where $c_s$ is a constant depending on $s$.
\end{lemma}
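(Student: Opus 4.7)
The plan is to apply Janson's inequality to an appropriate count of witnesses. Let $X$ denote the number of triples $\tau = (A', B', w)$ with $A' \in {A \choose s-1}$, $B' \in {B \choose t'}$, and $w \in W \setminus (A' \cup B')$ such that $G[A' \cup B']$ spans a copy of $K_{s-1,t'}$ and every edge of the $K_{s+t'}$ on $A' \cup B' \cup \{w\}$ not already present in $G$ lies in $G(V,p)$. It then suffices to show $\pr{X = 0} \le e^{-c_s \nu^2 n^2 p}$.

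For the mean $\mu = \E X$, each of the at least $\nu n^{s-1+t'}$ copies of $K_{s-1,t'}$ in $G$ leaves at least $n/2$ admissible vertices $w$, and every triple requires at most
\[
e^{*} := \binom{s-1}{2} + \binom{t'}{2} + (s-1) + t' = \binom{s}{2} + \binom{t'+1}{2}
\]
random edges (the worst case being $w \notin A \cup B$). Hence $\mu \ge \tfrac{\nu}{2}\, n^{s+t'}\, p^{e^{*}}$. For Janson's correlation term $\Delta$, summed over ordered pairs $\tau_1 \ne \tau_2$ of triples sharing at least one random edge, I would split according to the intersection pattern: two triples share a random edge only if either they share two vertices inside the same bipartition class (producing a common internal edge inside $A'$ or $B'$) or they agree in $w$ and share a vertex of $A \cup B$ (producing a common edge at $w$). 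For each pattern, indexed by the overlap sizes $|A'_1 \cap A'_2|$, $|B'_1 \cap B'_2|$, and the indicator $\mathbf{1}[w_1 = w_2]$, I would count the pairs using a Cauchy--Schwarz estimate on the number of $K_{s-1,t'}$-copies of $G$ sharing a fixed vertex subset, and multiply by the joint success probability $p^{2 e^{*} - (\text{shared edges})}$.

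Janson's inequality then yields $\pr{X = 0} \le \exp\!\bigl(-\mu^2/(\mu + \Delta)\bigr)$. Provided $C$ is sufficiently large in terms of $s$ and $t'$, every non-trivial intersection pattern is damped by a negative power of $C$ relative to $\mu$, and one obtains $\mu^2/(\mu + \Delta) \ge c_s \nu^2 n^2 p$. The main obstacle is the regime where $t'$ is close to $s-1$ and $s$ is large, since there $\mu$ itself may be of smaller order than $n^2 p$ and the elementary bound $\mu^2/(\mu + \Delta) \ge \mu/2$ does not reach the target. The resolution is that $\Delta$ is then dominated by pairs of triples sharing a single random edge, for which the reduction in the number of vertex choices exactly compensates the $p^{-1}$ cost of the shared edge, so that the Janson ratio $\mu^2/\Delta$ stays of order $\nu^2 n^2 p$.
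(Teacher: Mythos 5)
Your strategy is essentially the paper's: Janson's inequality applied to the count of triples $(w,A',B')$, a lower bound $\mu\geq\tfrac{\nu}{2}n^{s+t'}p^{e^*}$ on the mean, and the observation that the correlation term is dominated by pairs sharing a single random edge, where losing two vertex choices against gaining one factor $p^{-1}$ yields $\mu^2/\Delta=\Theta(\nu^2n^2p)$. You also correctly isolate the real point: for $t'=s-1$ the mean is only of order $\nu n\,C^{s(s-1)}$, so the exponent must come from $\mu^2/\Delta$, not from $\mu$. The paper does exactly this, after reducing to $t'=s-1$, and it disposes of all overlap patterns at once by log-convexity of $f(\ell)=n^{-\ell}p^{-\binom{\ell}{2}}$ plus endpoint comparisons that are powers of $C$; your ``damped by a negative power of $C$'' claim is the same assertion. (A small remark: Cauchy--Schwarz is not needed and not really the right tool for an upper bound on the pair counts; the trivial bound $n^{s-1+t'-|U|}$ on the number of $K_{s-1,t'}$-copies through a fixed set $U$ suffices, since a factor $\nu^2$ is allowed in the final exponent.)

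The one genuine gap is that you never remove the possibility $W\cap(A\cup B)\neq\emptyset$, and two of your steps are false without that reduction. First, your classification of how two triples can share a random edge is incomplete: if $w_1\in W\cap A$, then $\{w_1,v\}$ can be a $w$-edge of the first triple and an internal $A$-edge of the second (with $w_1\in A_2'$ and $v\in A_1'\cap A_2'$), and the pair $\{w_1,w_2\}$ itself can be shared with $w_1\neq w_2$ when $w_2\in A_1'\cup B_1'$; neither situation fits your two cases. Second, the joint success probability $p^{2e^*-(\text{shared edges})}$ is not an upper bound for triples whose $w$ lies in $A\cup B$ and already has $G$-edges to its chosen $B'$ (or $A'$): such a triple may need as few as $e^*-t'$ random edges, so the family is inhomogeneous and the clean formula for $\Delta$ (and its comparison with $\mu^2$) breaks as stated. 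Both problems vanish with the paper's opening reduction -- randomly split the vertices of $W\cap A$ and $W\cap B$ between $W$ and $A$, resp.\ $B$, so that one may assume $W$, $A$, $B$ are disjoint -- or, alternatively, by homogenizing your indicator, i.e.\ requiring all $e^*$ pairs outside the $K_{s-1,t'}$-copy to lie in $G(V,p)$ whether or not they happen to be edges of $G$. The paper additionally reduces to $t'=s-1$ by adding auxiliary vertices joined to all of $A$, which spares the bookkeeping in $t'$ (for $t'<s-1$ the mean is polynomially larger, so nothing is lost); with these reductions your sketch becomes the paper's proof.
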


\begin{proof}
	We may assume that $W$ is disjoint from $A$ and $B$, by taking sets $A' = A\setminus W$, $B'=B\setminus W$, $W'=W\setminus (A\cup B)$ and assigning each vertex $v \in W \cap A$ to $W'$ or $A'$ uniformly at random, and similarly each vertex $v \in W \cap B$ to $W'$ or $B'$, and then working with smaller sets $W'$, $A'$ and $B'$. Moreover, we may assume that $t' = s-1$ by adding to $B$ a set $B''$ of $n$ auxiliary vertices which are adjacent to every vertex of $A$.    
	Let $\mathcal S$ denote the set of triples $(w, S_A, S_B)$ with $S_A \subset A$, $S_B \subset B$, $|S_A| = |S_B| = s-1$, and such that $(S_A,S_B)$ spans a complete bipartite subgraph in $G$. For a triple $\Sb \in \mathcal{S}$, let $I_\Sb$ be the indicator random variable of the event that $\Sb$ induces a copy of $K_{2s-1}$ in $G \cup G(V,p)$.
	
	Denote 
	$$M = n^{2(s-1)}p^{{2(s-1)}+2\binom{s-1}{2}}
	=  \left(np^{1 + \frac{s-2}{2}} \right)^{2(s-1)} 
	= \left(np^{\frac s2} \right)^{2(s-1)} = C^{s(s-1)}.$$	
	For some constant $c_s'$ depending only on $s$, we have
	\begin{equation}\label{eq:lambda}
        \lambda:= \sum_{\Sb}\er{I_\Sb} \geq c_s' \nu n M.	      
	\end{equation}	
	Now, in order to use Janson's Inequality, let us bound the quantity
	$$\bar \Delta:= \sum_{\Sb} \sum_{ \Sb' \sim \Sb} \er{I_{\Sb}  I_{\Sb'}},$$
	where (as indicated by the relation `$\sim$') the second sum is taken over $\Sb'$ such that $I_\Sb$ and $I_{\Sb'}$ are \textit{not} independent.
	For $\Sb = (w, S_A, S_B)$ and $\Sb '= (w', S_A', S_B')$, let $\ell_A = |S_A \cap S_A'|$ and $\ell_B = |S_B \cap S_B'|$. Define $f(\ell) = n^{-\ell}p^{-\binom{\ell}{2}}$. In case $w \neq w'$, we have
	\begin{equation}
		\label{eq:janson-case1}
        \sum_{w \neq w'}\er{I_{\Sb} I_{\Sb'}} \leq n^{2+4(s-1)-\ell_A-\ell_B} p^{4(s-1) + 4{s-1\choose 2} - {\ell_A \choose 2} - {\ell_B \choose 2}} \leq n^2 M^2 f(\ell_A) f(\ell_B),
	\end{equation}
	for appropriate values $\ell_A$ and $\ell_B$ to be determined later. Similarly, if $w = w'$, we have
	\begin{align}
		\label{eq:janson-case2}
        \sum_{w=w'}\er{I_{\Sb} I_{\Sb'}} &\leq n^{1+4(s-1)-\ell_A-\ell_B}p^{4(s-1) - \ell_A - \ell_B + 4{s-1\choose 2} - {\ell_A \choose 2} - {\ell_B \choose 2}} \leq n M^2 f(\ell_A) p^{-\ell_A} f(\ell_B) p^{-\ell_B},
	\end{align}
	again for appropriate values of $\ell_A$ and $\ell_B$.
	We claim that in both cases, the quantity $\er{I_{\Sb}  I_{\Sb'}}$ is maximised when $\ell_A$ and $\ell_B$ are as small as possible. 
        To see this, note that both functions $f(\ell)$ and $f(\ell)p^{-\ell}$ are log-convex, i.e.,
    $$\frac{f(\ell+1)p^{-\ell - 1}f(\ell - 1)p^{-\ell+1}}{f(\ell)^2 p^{-2\ell}} = \frac{f(\ell+1)f(\ell-1)}{f(\ell)^2} = p^{\frac12 \left(-\ell^2-\ell -\ell^2 + 3\ell -2 + 2\ell^2-2\ell \right)}= p^{-1} >1$$    
    
   Hence, it suffices to compare the extremal values $f(0)$ and $f(s-1)$, and the values $f(2)$ and $f(s-1)$ as we need $\max\{\ell_A, \ell_B\}\geq 2$. We have
    $$\frac{f(0)}{f(s-1)} = n^{s-1} p^{{s-1 \choose 2}} = C^{{s-1}\choose 2} \geq 1,$$
    and
    $$\frac{f(2)}{f(s-1)} = n^{s-3} p^{{s-1 \choose 2}-1} = C^{{{s-1}\choose 2}-1} \geq 1.$$
	Using these bounds, inequality~\eqref{eq:janson-case1}, and the fact that $\max \{\ell_A, \ell_B\} \geq 2$ whenever $\Sb$ and $\Sb'$ are not independent, we have that in case $w \neq w'$,
	$$\sum_{w\neq w'}\er{I_{\Sb} I_{\Sb'}} \leq n^2M^2 f(2) f(0) = p^{-1}M^2.$$
	The same argument shows that the right-hand side of~\eqref{eq:janson-case2} is minimised when $\ell_A = 1$ and $\ell_B =0$, or vice versa, so when $w = w'$, we have
	$$\sum_{w=w'}\er{I_{\Sb} I_{\Sb'}} \leq nM^2 f(1)p^{-1} f(0) =  p^{-1}M^2.$$
	Using~\ref{eq:lambda}, we conclude that $\bar \Delta \leq 2M^2 p^{-1} = O_s(p^{-1} n^{-2} \nu^{-2}\lambda^2)$.
	
	Using Janson's Inequality (\cite[Theorem 2.14]{jlr00}), we conclude that
	$$\pr{\sum_{\Sb \in \mathcal{S}} I_\Sb = 0} \leq e^{-\frac{\lambda^2}{8 \bar \Delta}} \leq e^{-c_s \nu^2 pn^2},$$
	as required.
\end{proof}

We are now ready to prove Lemma~\ref{l:non-extremal} in the non-singular case $s>t$.

\begin{proof}[Proof of Lemma~\ref{l:non-extremal} in case $s>t$]
	We will be working with positive constants satisfying the hierarchy
	\begin{equation*}
		\gamma \gg  d \gg  \nu \gg \eps' \gg \eps.
	\end{equation*}
	Let $d = d(\gamma)$ be the constant from Lemma~\ref{l:st-reduced}. Recall that $r = s+t$ with $s>t$.
	
	Applying Lemma~\ref{l:st-reduced} (with sufficiently small $\eps$ which is fixed so that the following steps of the proof go through, and $d$ which does not depend on $\eps$), we obtain the sets  $(S_i, T_i)_{i \in [\ell]}$ which form $(\eps, d', d)$-super-regular pairs with some $d' \geq d$. Let $\cS = \{S_i: i \in [\ell]\}$, and $V(\cS) = \bigcup_{i \in [\ell]} S_i$, and define $\cT$ and  $V(\cT)$ analogously for the sets $T_i$. Let $X = V(G) \setminus \bigcup_{i \in [\ell]}(S_i \cup T_i)$ with $|X|\leq \eps n$. Note that $|V(\cS)| \geq \frac 12 (|V(\cS)|+|V(\cT)|)\geq \frac{n}3$. Let $n_*$ be such that all parts $S_i$ and $T_i$ contain at least $n_*$ and at most $rn_*$ vertices; note that $n_* \geq n/(2r \ell)$.
	
	Instead of sampling $\Gnp$ with $p = C n^{-2/s}(\log n)^{2/(s(s-1))}$ at once, we will expose three independent graphs $G_i \sim G(n, p/4)$ for $i = 1,2,3$. The union $G_1 \cup G_2 \cup G_3$ can be viewed as a subgraph of $\Gnp$.
	
	\subsubsection*{Covering the vertices in $X$.} The graph $G_1$ will be used to cover $X$. To retain the minimum degree in the remaining pairs, we will use only a small subset of $V(\cS)$. To this end, let $\vartheta = d/4$ and $\eps \ll \eps' \ll d$, and let $Z$ be a random subset of $V(\cS)$ in which each vertex is included independently with probability $ \vartheta$. Using Chernoff's inequality and regularity inheritance (see Lemma~\ref{l:slicing}), we may assume that $\frac{|Z \cap S_i|}{|S_i|} \in[\theta/2, 3\vartheta /2]$ for $i \in [\ell]$. In particular
    \[ \frac{d}{8} |V(\cS)| \leq |Z| \leq \frac{3d}{8} |V(\cS)|, \]
    and \ref{it:S-neighbours} and~\ref{it:S-reduced} from Lemma~\ref{l:st-reduced} hold for $S_i$ replaced by $S_i'':=Z \cap S_i$, $d$ replaced by $d/2$ and $\eps$ replaced by  $ 4\eps/ \vartheta < \eps'$, that is the following holds.
    \begin{itemize}
        \item[(ii)] Every vertex $x$ has at least $d|Z|/2$ neighbours in $Z=\bigcup_{i\in[\ell]} S_i''$.
        \item[(iii)] For every set $S_i''$, there are at least $d\ell$ sets $S'$ in $\{S''_1,\dots,S''_\ell\}$ such that $(S''_i,S')$ is an $(4\eps/\theta,d',d/2)$-super-regular pair with density $d' \geq d/2$.
    \end{itemize}
    Note also that 
    \[|X| \leq \eps n \leq \frac{6\eps|Z|}{\vartheta} \leq \eps' |Z|.\]
	
	We will next use Lemma~\ref{l:x-cover} with $m=1$ (note that here the graph $B_{m,s,t}$ is just $K_{s,t}$). To verify the hypothesis of the lemma, for a vertex $x \in X$ and a set $Y\subset Z$ with $|Y| \leq 2r|X|$, we will find a suitably large collection of copies of $K_{s,t}$ in $(G[Z]\setminus Y) \cup \{x\}$ containing $x$ in a class of size $t$. 
    Firstly, we discard the parts $S_i''$ for which $|S_i'' \cap Y| > \frac{|S_i''|}{2}$ and add them to $Y$ thus passing to a superset $Y' \supset Y$. Note that $|Y'| \leq 2|Y| \leq 4r\eps'|Z|$.
	
	Using~\ref{it:S-neighbours}, $x$ has at least $\frac{d|Z|}{2} -|Y'| \geq \frac{d|Z|}{5} $ neighbours in $Z\setminus Y'$, so there is a part $S_i''$ such that $x$ has at least $\frac{d |S_i''|}{5}$ neighbours in $S_i'' \setminus Y'$. Let $N_x$ denote the neighbourhood of $x$ in $S_i''\setminus Y'$. For the part $S_i''$, there is $S_j''$ such that $(S_i''\setminus Y', S_j'' \setminus Y')$ is an $(\eps', d/2)$-regular pair (noticing that here we do not even need super-regularity). Using the Counting Lemma~\cite[Lemma 2.4]{cf13}, we can find at least 
    \[\frac 12 \left(\frac d4 \right)^{st} |N_x|^{s-1}|S_j''|^t \geq \frac 12 \left(\frac{d}{4} \right)^{st} \cdot \left(\frac{d}{5} \right)^{s-1}\left(\frac{\nu}2|S_j|\right)^{s+t-1}\]
    copies of $K_{s-1,t}$ in $G[N_x, S_j'']$, which is at least $\nu |Z|^{s+t-1}$ for some $\nu = \nu(d,r)$. 
	
	Hence we can apply Lemma~\ref{l:x-cover} to obtain a partial $K_r$-factor in $(G \cup G_1)[X \cup Z]$ covering $X$. Let $X^+$  be the vertex set of this partial $K_r$-factor, and recall that $X^+$ is disjoint from $V(\cT)$. Let $S_i' = S_i \setminus X^+$ and $T_i' =T_i$ for each $i\in[\ell]$. Since $|S_i \setminus X^+| \geq |S_i| - |S_i''| \geq (1-3\vartheta/2) |S_i| = (1 - 3d/8)|S_i|$, each vertex from $T_i'$ has at least $5d|S_i'|/8$ neighbours in $S_i'$.

\subsubsection*{Balancing the parts} In the second round, we will \textit{balance} the remaining regular pairs.
Let $G_2 \sim G(n, p/4)$, and assume that $G_2$ has the following properties.
	\begin{description}
		\item[(P1)] For all $i$ and $j \in [\ell]$, if $A \subset S_i$, $B \subset T_i$, and $W \subset S_j$ are subsets of size at least $\nst/2$, then there is a copy of $K_r$ in $G \cup G_2$ with one vertex in $W$, $(s-1)$ vertices in $A$ and $t$ vertices in $B$. 
		\item[(P2)] Let $s', t' \leq s$. For all $i$ and $j \in [\ell]$, if $A \subset S_i$ and $B \subset T_i$ are subsets of size at least $\nst/2$, then there is a copy of $K_{s'+t'}$ in $G \cup G_2$ with $s'$ vertices in $A$ and $t'$ vertices in $B$.
	\end{description}
	Properties (P1) and (P2) hold with high probability by Lemma~\ref{l:divb}; namely, we can take a~union bound over all choices of $A$, $B$, $W$, and all indices $i, j$, seeing as the bound given by Lemma~\ref{l:divb} is exponential in $C'n^2p$.

Recalling that  $S_i' = S_i \setminus X^+$ and $T_i' = T_i$, we consider the quantity $s|T_i'|-t|S_i'|$, which we call the \textit{defect} of the pair $(S_i', T_i')$. Define $r_i \in \{0, \ldots, r-1\}$ so that 
\[|S_i| - |S_i'| \equiv r_i \pmod{r}.\] 
Note that since $|S_i| - |S_i'| = (|S_i| + |T_i|) - (|S_i'| + |T_i'|)$ and $r$ divides $|S_i|+|T_i|$, we get
\[ |S_i'| + |T_i'| \equiv -r_i \pmod{r}.\] 
Since $r_i \leq |S_i| - |S_i'| \leq \frac{3 \vartheta |S_i|}{2}$ and $|T_i|=|T_i'|$, we can bound the defect $s|T_i'|-t|S_i'|$ as follows
\begin{equation}\label{eq:defect1}
    tr_i = s|T_i|-t |S_i| + tr_i \leq s|T_i'| - t|S_i'| \leq s|T_i|-t|S_i|+\frac{3t\vartheta}{2}|S_i| =\frac{3t\vartheta}{2}|S_i|.
\end{equation}

We will first obtain pairs with a total number of vertices divisible by $r=s+t$. 
Using (P1) with $W=S_1'$, for $i = 2, \ldots, \ell$ we repeatedly remove $r_i$ copies of $K_r$, each with $s-1$ vertices in $S_i'$, $t$ vertices in $T_i'$, and one vertex in $S_1'$. The property (P1) can be repeatedly applied in this process, since we have only removed at most $r\ell$ vertices in total. Let $S_i^*$ and $T_i^*$ be the remaining parts. 
For $i \in \{2, \ldots, \ell\}$, 
\[|S_i^*| + |T_i^*| \equiv |S_i'| + |T_i'| - r_i(r-1) \equiv 0 \pmod{r}.\] 
Since $r$ divides $\sum_{i \in [\ell]} |S_i^*|+|T_i^*|$, it follows that $r$ also divides $|S_1^*|+|T_1^*|$.

Now, to control the defects, using \eqref{eq:defect1}, for $i \in \{2, \ldots, \ell\}$ we have
$$s|T_i^*|-t|S_i^*| = s|T_i'|-t|S_i'|- s r_it + tr_i(s-1) \geq tr_i - tr_i = 0.$$
For $i=1$ this inequality is clear since $T_1^* = T_1'$.
Moreover, since we removed at most $r\ell$ vertices, it holds that
$$s|T_i^*| - t|S_i^*| \leq 2t \vartheta |S_i|.$$

Finally, for $i \in [\ell]$ we will balance the pairs $(S_i^*, T_i^*)$. To this end, note that $s|T_i^*| - t|S_i^*| \equiv s|T_i^*|+s|S_i^*| \equiv 0 \pmod{r}$, so there is a positive integer $b \leq 2t \vartheta |S_i|/r$ such that
$$s|T_i^*| - t|S_i^*|-b(s+t) = 0.$$
Using (P2), we repeatedly remove $b$ copies of $K_{r}$ with $s-1$ vertices in $S_i$ and $t+1$ vertices in $T_i$; this is possible since $rb \leq \frac{|S_i|}{4}$. The resulting defect is
$$s(|T_i^*|-(t+1)b) - t(|S_i^*|-(s-1)b) = s|T_i^*|-t|S_i^*| - b(s+t) =0.$$

Denote the remaining parts by $(S_i^\dagger, T_i^\dagger)$. They are $(\eps/2, (d/2)^+)$-regular by inheritance. Each vertex $v \in |S_i^\dagger|$ has at least $d|T_i|-2\vartheta|T_i| \geq d|T_i^\dagger|/2$ neighbours in $T_i^\dagger$, and vice versa, so the pair $(S_i^\dagger, T_i^\dagger)$ is $(\eps/2, d'', d/2)$-super-regular for some $d'' \geq d/2$.

Finally, we can expose $G_3$ and apply Lemma~\ref{c:st-reg-pairs-factors} to obtain a $K_r$-factor in $G \cup G_1 \cup G_2 \cup G_3$.
\end{proof}

\subsection{The singular case $s=t$}

We first state a result corresponding to Lemma~\ref{l:st-reduced} in the singular case $(s=t=r/2)$. The proof is omitted as it follows from the same proof of Lemma~\ref{l:st-reduced} with slightly different notation. The reason for the differing notation is that in the singular case we need a collection of edges \textit{and triangles} which cover almost all vertices of the reduced graph. Moreover, the corresponding proof 
is simpler since there is no need to refine the original partition obtained from the Regularity Lemma (in other words we don't need (i) in Lemma~\ref{l:st-reduced}).

\begin{lemma}
	\label{l:st-reduced-singular}
	Let $s$ be a positive integer,  $1/s \gg \gamma \gg d>0$, and let $\eps>0$ be given. There is $\bar L = \bar L(\gamma, \eps)$ such that the following holds for sufficiently large $n$. Let $G$ be an $n$-vertex graph with $\delta(G) \geq n/2$ in which every vertex subset of size $n/2$ contains at least $\gamma^2 n^2$ edges. There is a collection of parts $S_i$ indexed by $i \in L_2 \cup L_3$, $|L_2 \cup L_3| \leq \bar L$, with the following properties. Let $R$ be the $(\eps, d, d)$-reduced graph on $\{S_i: i \in L_2 \cup L_3\}$.
	\begin{enumerate} 
		\item \label{it-ST-sizes-sing} For every $i,j \in L_2 \cup L_3$,  $|S_i| = |S_j|$ and $\left| V(G) \setminus \bigcup_{i \in L_2 \cup L_3} S_i \right| \leq \eps n$.
        \item \label{it:st-reduced-graph-mindeg} The reduced graph $R$ has minimum degree at least $\left(\frac 12 - 10d^{1/4} \right)|V(R)|$ and no independent set of size $\left(\frac 12 - 20d^{1/4} \right)|V(R)|$.
		\item The reduced graph $R$ contains a perfect matching on $L_2$ and a triangle factor on $L_3$. \label{it:st-reduced-graph-absorb}
        \item\label{it:st-super-regular} For every set $S_i$ with $i \in L_2 \cup L_3$, there are at least $d|L_2 \cup L_3|$ sets $S_j$ with $j\in L_3$ such that $(S_i, S_j)$ is an $(\eps,d, d)$-super-regular pair.
	\end{enumerate}
\end{lemma}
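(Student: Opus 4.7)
The plan is to follow the proof of Lemma~\ref{l:st-reduced} closely, exploiting a simplification afforded by $s=t=r/2$: the edges and triangles produced by Lemma~\ref{l:6.14chr} directly yield the matching on $L_2$ and the triangle factor on $L_3$, so neither the $K_2^*$-packing (Lemma~\ref{l:k2-packing}) nor the random subdivision of parts used there will be needed.

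First, choose constants with $\eps \ll d \ll \gamma$, in particular $d^{1/4} \leq \gamma/40$, and apply the Regularity Lemma (Lemma~\ref{l:regularity}) to $G$ with parameters $\eps$, $d$ and a threshold $\ell_0$ large in $\gamma$ and $\eps$. This produces an $\eps$-regular partition $V_0 \dcup V_1 \dcup \cdots \dcup V_{\ell'}$ with $(\eps,d)$-reduced graph $R_0$ of minimum degree at least $(1/2 - 2d - 2\eps)\ell'$. I would then verify that $R_0$ contains no independent set of size $(1/2-\gamma/4)\ell'$: such a set $I$ would correspond to $V_I \subseteq V(G)$ of size at least $(1/2-\gamma/3)n$ whose internal edges are confined to non-regular pairs, pairs of density below $d$, or single parts of the partition, for a total of well under $\gamma^2 n^2/2$ edges; extending $V_I$ arbitrarily to a set of size exactly $n/2$ adds at most another $\gamma^2 n^2/2$ edges, contradicting the hypothesis that every set of size $n/2$ contains $\geq \gamma^2 n^2$ edges.

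Next, apply Lemma~\ref{l:6.14chr} with $s=t=r/2$ to $R_0$ using $\delta_{\ref{l:6.14chr}} := 2d$ and $\gamma_{\ref{l:6.14chr}} := \gamma/4$. Since conclusion (i) of that lemma is excluded by the previous paragraph, conclusion (iii) furnishes a collection $\mathcal{C}$ of vertex-disjoint $K_3$- and $K_2$-copies covering all but $O(1)$ vertices of $R_0$ and containing a $(d,3)$-absorber $\absorber$ made of triangles. Let $L_3$ (respectively $L_2$) index those $V_i$ that are covered by a triangle (respectively an edge) of $\mathcal{C}$, and absorb the $O(1)$ uncovered parts into the leftover set $V_0$. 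This instantly gives \ref{it:st-reduced-graph-absorb}, and property \ref{it:st-reduced-graph-mindeg} for the final reduced graph $R$ inherits from the analogous property of $R_0$ established above, with a minor loss comfortably absorbed by the $d^{1/4}$ slack in the statement.

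Finally, upgrade the selected pairs to super-regularity and balance sizes. For each pair $(V_i,V_j)$ that is an edge of $\mathcal{C}$ or of $\absorber$, discard at most $\eps^{3/4}|V_i|$ vertices from each side by the standard argument (e.g.~\cite[Lemma 2.8]{abcd22}) to make it $(\eps^{1/2},d,d)$-super-regular; since every part lies in only $O(1)$ such pairs and $\eps \ll d$, the cumulative discards are negligible. Then trim all remaining parts to the common size $m:=\min_i |V_i|$, sending the removed vertices to $V_0$, which still has size at most $\eps n$; this yields \ref{it-ST-sizes-sing}. Property \ref{it:st-super-regular} follows from the absorbing property of $\absorber$, which guarantees $\Omega(d^2 \ell')$ choices of $j \in L_3$ for which some triangle of $\absorber$ through $V_j$ meets a neighbour of $V_i$; after the final trimming these remain super-regular by inheritance (Lemma~\ref{l:slicing}). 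The only real obstacle is the cumulative bookkeeping of discards and their effect on the minimum degree in \ref{it:st-reduced-graph-mindeg}, which is precisely why the statement only claims an error of order $d^{1/4}$ rather than $d$.
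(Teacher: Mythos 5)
Your overall route is exactly the one the paper intends (the paper omits this proof, remarking that it is the proof of Lemma~\ref{l:st-reduced} minus the refinement step, since the edges and triangles delivered by Lemma~\ref{l:6.14chr}(iii) directly furnish $L_2$ and $L_3$). However, two of your quantitative claims fail as written. First, to exclude alternative (i) of Lemma~\ref{l:6.14chr} you must rule out vertex sets of the reduced graph of size $(\tfrac12-\gamma_{\ref{l:6.14chr}})\ell'$ spanning at most $\delta_{\ref{l:6.14chr}}(\ell')^2$ edges, not merely independent sets; and, more importantly, with your choice $\gamma_{\ref{l:6.14chr}}=\gamma/4$ the completion of $V_I$ (of size only $\geq(\tfrac12-\gamma/3)n$) to a set of size exactly $n/2$ can add up to $(\gamma/3)n\cdot n=\Theta(\gamma n^2)$ edges, since each added vertex may contribute up to $n$ edges. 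This is far larger than the $\gamma^2 n^2$ edges guaranteed by the hypothesis of the present lemma (note the hypothesis here is $\gamma^2n^2$, not $\gamma n^2$ as in Lemma~\ref{l:st-reduced}), so no contradiction is reached. The repair is to take the size-slack of order $\gamma^2$, e.g.\ apply Lemma~\ref{l:6.14chr} with $\gamma_{\ref{l:6.14chr}}=\gamma^2/4$, and to use $d\ll\gamma^2$ so that the at most $2d(\ell')^2$ reduced-graph edges contribute at most $2dn^2\ll\gamma^2n^2$; the same care is needed when deducing conclusion (ii), which is precisely why the statement only claims slack $d^{1/4}$.

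Second, your own count for property (iv) does not meet the target: a $(d,3)$-absorber only guarantees about $\tfrac{d^2}{216}\ell$ absorber triangles meeting a neighbour of a given part, hence only $\Omega(d^2)\,|L_2\cup L_3|$ suitable sets $S_j$ with $j\in L_3$, whereas (iv) demands at least $d\,|L_2\cup L_3|$ of them. In the proof of Lemma~\ref{l:st-reduced} this is handled by separating the working constant from the output constant (the regularity/absorber steps are run with $\delta$ and $d_1=\delta/10$, and the final $d$ is taken to be roughly $\delta^3$). You need the analogous move here: run the Regularity Lemma and Lemma~\ref{l:6.14chr} with an intermediate constant $d_1$ satisfying $\gamma\gg d_1\gg d$ (say $d\leq d_1^2/300$), and only at the end pass to the $(\eps,d,d)$-reduced graph; with your single constant $d$ used throughout, the count in (iv) (and comfortably absorbing the super-regularisation losses) does not close. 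Both issues are parameter-hierarchy repairs rather than conceptual flaws; once they are made, your argument coincides with the paper's intended proof.
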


Next, we will need a lemma which will ensure \textit{short} paths (or trails) in the reduced graph. Recall that a \emph{trail} in a graph $G$ is a sequence $v_0, \ldots, v_\ell$ of vertices of $G$ such that $v_{i-1}v_{i}\in G$ for $i \in [\ell]$ (but the vertices do not have to be distinct). As usual, a~path is a~trail with no repeated vertices. The \emph{length} of a trail $v_0, \ldots, v_\ell$ is $\ell$, that is, the number of edges of the trail. Moreover, this trail is referred to as a $v_0$--$v_\ell$ trail and is called \emph{even} if $\ell$ is even. 

\begin{lemma}
    \label{l:diameter}
    Let $G$ be a connected $n$-vertex graph with $\delta(G)> n/3$, and suppose that each vertex of $G$ belongs to a triangle. Then for any $u, v \in V(G)$, there is an even trail of length at most 8 between $u$ and $v$.
\end{lemma}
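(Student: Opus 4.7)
The plan is to bound the diameter of $G$ by $5$ and then fix parity using a triangle through $u$. For the diameter bound I would argue by contradiction: suppose the diameter is at least $6$ and fix $u, v$ with $d(u,v)$ equal to the diameter. Let $L_i = \{w : d(u,w) = i\}$ be the BFS levels from $u$; by connectedness they partition $V(G)$, with $L_i \neq \emptyset$ for $0 \leq i \leq d(u,v)$ and $L_{d(u,v)+1} = \emptyset$. Since $N_G(w) \subseteq L_{i-1} \cup L_i \cup L_{i+1}$ whenever $w \in L_i$, applying $|N_G(w)| > n/3$ at $u \in L_0$, at any vertex of $L_3$, and at $v$ yields three inequalities of the form $|L_{i-1}| + |L_i| + |L_{i+1}| > n/3 + 1$ whose index sets are pairwise disjoint subsets of $\{0, 1, 2, 3, 4, d(u,v)-1, d(u,v)\}$ (using $d(u,v) \geq 6$). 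Summing, the corresponding levels have total size exceeding $n + 3$, contradicting that they form a subset of $V(G)$.

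Given that the diameter is at most $5$, fix a shortest path $P = w_0 w_1 \cdots w_d$ from $u = w_0$ to $v = w_d$, with $d \leq 5$. If $d$ is even, $P$ itself is an even trail of length at most $4$. Otherwise $d \in \{1, 3, 5\}$; choose a triangle $uxy$ through $u$, which exists by hypothesis. If $w_1 \notin \{x, y\}$, then the walk $u \to x \to y \to u \to w_1 \to \cdots \to w_d$ is an even trail of length $d + 3 \leq 8$. Otherwise $x = w_1$ (say), so $y$ is a common neighbour of $u$ and $w_1$ with $y \neq u, w_1$, and the walk $u \to y \to w_1 \to w_2 \to \cdots \to w_d$ is an even trail of length $d + 1 \leq 6$. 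Edge-disjointness in both cases follows from $x, y \in N_G(u)$ combined with $d(u, w_i) = i$ along the shortest path $P$, which forces $\{x, y\} \cap V(P) \subseteq \{w_1\}$.

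I expect the diameter bound to be the main technical step; the parity fix is then a short case analysis. The length budget of $8$ is tight, realised when $d = 5$ and the triangle through $u$ must be inserted via a $+3$ detour. The triangle hypothesis is essential: it furnishes a $+1$ or $+3$ parity-switching gadget, without which a bipartite obstruction could rule out any bounded even trail.
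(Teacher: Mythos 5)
Your proposal is correct and follows essentially the same route as the paper: there, the diameter bound of $5$ is obtained by noting that the neighbourhoods of $u$, $v$ and a middle vertex of a shortest path would be pairwise disjoint, each of size greater than $n/3$ (your BFS-level count is the same estimate), and parity is then fixed by appending a triangle (at $v$ rather than at $u$). Your extra case analysis guaranteeing edge-disjointness is not needed under the paper's definition of a trail (which only requires consecutive vertices to be adjacent), but it is harmless and in fact proves the slightly stronger, standard notion of a trail.
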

    
\begin{proof}
    Given $u$ and $v$, let $P$ be the shortest $u$--$v$ path in $G$, and assume that its length is at least $6$. Then $P$ contains a  vertex $w \in V(P)$ which is at distance at least 3 from both  $u$ and $v$. The neighbourhoods of $u, v$ and $w$ are mutually disjoint, as otherwise there would be a path shorter than $P$ between $u$ and $v$. However, this cannot happen as $\delta(G)> n/3$. Hence $P$ is a $u$--$v$ path of length at most 5.

    If $P$ is even, we are done. If $P$ is not even, we can append a triangle containing $v$ to it, creating a trail of length at most 8.
\end{proof}

Finally, we need a separate statement to cover graphs $G$ which approximately consist of two disjoint cliques of size $n/2$. (To see why this cannot be covered by our usual proof strategy, notice that e.g.\,when $G$ consists of two cliques differing in a small number of vertices, the random edges have to be used to find complete bipartite graphs $K_{s,s}$ between the two cliques.) We defer the proof of this lemma to the end of the section. 

\begin{lemma}
    \label{l:disconnected}
    There is $\xi=\xi(s)$ such that the following holds for sufficiently large $C$. Let $G$ be a graph with $\delta(G) \geq n/2$, and suppose that the vertices of $G$ can be partitioned into sets $A$ and $B$ such that there are at most $\xi n^2$ edges between $A$ and $B$. Then, for $p = Cn^{-2/s}(\log n)^{2/(s(s-1))}$, with high probability, $G \cup \Gnp$ contains a $K_{2s}$-factor.
\end{lemma}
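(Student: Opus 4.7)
My plan is to build the $K_{2s}$-factor in four stages.

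First, I would analyse the structure of $G$. Assuming without loss of generality $|A|\ge |B|$ and writing $|A|=n/2+\alpha$, a direct edge and degree count using $\delta(G)\ge n/2$ and $e_G(A,B)\le \xi n^2$ forces $0\le\alpha\le O(\xi n)$ and shows that $G[A]$ and $G[B]$ together miss at most $2\xi n^2$ edges.

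Second, fix $\eta=1/(10s)$ and call $v\in A$ \emph{bad} if it has more than $\eta|A|$ non-neighbours inside $A$, and analogously in $B$. Each side contains $O(s\xi n)$ bad vertices, and each bad $v\in A$ has $d_{G[A,B]}(v)\ge \eta n/4$; writing $N_v$ for $v$'s set of good cross-neighbours in $B$, one has $|N_v|\ge \eta n/8$ and the non-edge density of $G$ inside $N_v$ is $O(\xi/\eta^2)$. Choosing $\xi$ small enough in terms of $s$, a simple counting argument produces $\Omega(n^{2s-1})$ copies of $K_{2s-1}$ inside $G[N_v]$, so I can greedily cover each bad $v\in A$ by a $K_{2s}$-copy consisting of $v$ together with one such $K_{2s-1}$, using only previously unused vertices. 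Bad vertices in $B$ are handled symmetrically, contributing $K_{2s}$-copies with a single bad $B$-vertex and $2s-1$ good $A$-vertices.

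Third, let $A',B'$ denote what is left after Stage two and set $a'=|A'|\bmod 2s$. If $a'\in\{1,\dots,2s-1\}\setminus\{s\}$, extract one $K_{2s}$-copy with distribution $(a',2s-a')$ across $A'$ and $B'$; if $a'=s$, extract instead two copies with distributions $(s-1,s+1)$ and $(1,2s-1)$. Each required bipartite clique $K_{k,2s-k}$ has $k\ne s$ and hence at most $s^2-1$ cross-edges, so Janson's inequality gives $\Omega(n^{2/s})$ such copies in $\Gnp[A',B']$ with high probability, and the two cliques $K_k\subset G[A']$ and $K_{2s-k}\subset G[B']$ are guaranteed by the density of these graphs. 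Finally, in Stage four the remaining sets $A''$ and $B''$ are divisible by $2s$ and contain only good vertices, yielding $\delta(G[A''])\ge (1-1/(2s))|A''|$ and similarly for $B''$; the Hajnal--Szemer\'edi theorem produces $K_{2s}$-factors inside $G[A'']$ and $G[B'']$ separately, which together with the copies from Stages two and three complete the desired $K_{2s}$-factor of $G\cup\Gnp$.

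The hard part is Stage two: since $\Gnp$ at this density lies well below the threshold for a $K_{s,s}$-factor in a bipartite random graph, a bad vertex cannot be absorbed into a balanced copy of $K_{2s}$ through random cross-edges. Instead it must be routed through its deterministic cross-neighbours in $G$, and the argument hinges on the observation that although $N_v$ is only a small linear-size subset of $B$, its non-edge density in $G$ remains polynomially small in $\xi$, so a $K_{2s-1}$ inside $G[N_v]$ is always available.
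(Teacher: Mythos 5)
Your argument is correct in the regime in which the lemma is actually used, and it reaches the goal by a genuinely more elementary route than the paper. The shared skeleton is: remove the atypical vertices, fix divisibility by $2s$ with a few $K_{2s}$-copies whose $A$--$B$ split is unbalanced, and finish with Hajnal--Szemer\'edi inside each side. The differences are in the first two steps. The paper's exceptional set consists of vertices with many cross-neighbours, and it is covered via Lemma~\ref{l:x-cover}, i.e.\ the spread-measure/fractional Kahn--Kalai machinery together with the Heckel--Riordan coupling and K\H{o}v\'ari--S\'os--Tur\'an supersaturation: each exceptional vertex is completed to a $K_{s,s}$ of $G$ whose classes are then filled in by random edges. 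You instead exploit that \emph{both} $G[A]$ and $G[B]$ have $O(\xi n^2)$ non-edges, so a bad vertex $v$ can be matched with a genuine $K_{2s-1}$ of $G$ inside its cross-neighbourhood $N_v$; this greedy covering uses no random edges at all, avoids the spread machinery, and in fact removes the logarithmic factor in $p$ for this step (the paper explicitly remarks that its log factor is an artefact of Lemma~\ref{l:x-cover}). For divisibility, the paper repeatedly extracts $(s+1,s-1)$-copies via its property (P); your version extracts at most two copies with tailored splits $(a',2s-a')$, resp.\ $(s-1,s+1)$ and $(1,2s-1)$, which rests on the same key point (a split $k\neq s$ needs at most $s^2-1$ random cross edges, so the expected count $n^{2s}p^{s^2-1}=n^{2/s+o(1)}\to\infty$ and Janson applies) and, being mod-$2s$ exact, it even sidesteps a small parity subtlety in the paper's $(s+1,s-1)$-only scheme. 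Note also that since your Stage two is deterministic, $A'$ and $B'$ are fixed sets, so a single application of Janson suffices and no union bound over subsets is needed.

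Two small caveats. First, your Stage-one claim that $\delta(G)\ge n/2$ and $e_G(A,B)\le \xi n^2$ \emph{force} $|A|=n/2+O(\xi n)$ is not literally true: the constraint $|B|\bigl(n/2-|B|\bigr)\lesssim \xi n^2$ also admits $|B|=O(\xi n)$ (e.g.\ $B=\emptyset$), in which case the lemma as stated is vacuously about an arbitrary graph of minimum degree $n/2$ and, for $s\ge 3$, is false at this value of $p$ by Theorem~\ref{t:lower-bound}. This degenerate regime is excluded implicitly in the paper as well (its deduction $|A'|\ge(1/2-(4s+1)\sqrt{\xi})n$ likewise presupposes a nondegenerate partition, and in the application both sides have size $\approx n/2$), so it is a defect of the statement rather than a gap specific to your proof; but you should state near-balancedness as a hypothesis rather than claim to derive it. Second, in Stage three make sure the (at most two) extracted copies are vertex-disjoint from each other and from Stage two's copies; since only $O(s^2\xi n)+O(s)$ vertices are ever removed, the clique counts in $G[A']$, $G[B']$ and the Janson estimate are unaffected.
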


The proof of Lemma~\ref{l:non-extremal} in the singular case follows the same outline as in the non-singular case. Specifically, finding a factor in the reduced graph and covering the remainder $X$ is the same in the two proofs, whereas the \textit{balancing step} is rather different.

\begin{proof}[Proof of Lemma~\ref{l:non-extremal} in case $s=t$]
We will be working with positive constants satisfying the hierarchy
\begin{equation*}
	\gamma \gg  d \gg  \nu \gg \eps.
\end{equation*}
Let $d = d(\gamma)$ be the constant from Lemma~\ref{l:st-reduced-singular}. Recall that $r=2s$.
	
Applying Lemma~\ref{l:st-reduced-singular} (with sufficiently small $\eps$ which is fixed so that the following steps of the proof go through), we obtain  the sets $\{S_i: i \in L_2 \cup L_3 \}$ satisfying the conclusion of the Lemma.
Let $\ell = |L_2 \cup L_3|$ and $X = V(G) \setminus  \bigcup_{i \in L_2 \cup L_3} S_i $ with $|X|\leq \eps n$. Let $n_*$ be the size of each part $S_i$.

Firstly, assume that there is a set $L' \subset L_2 \cup L_3$ with $|L'| \leq \frac{50s \eps}{d}\cdot \ell$ such that the graph $R[L_2 \cup L_3 \setminus L']$ is not connected. Then we can show that the original graph $G$ has a vertex partition $A\cup B$ with $e_G(A, B) = O(\frac{s \eps}{d} n^2)$. Indeed, let $A = \bigcup_{i\in L'} S_i$ and $B = V(G)\setminus A$. Then
\begin{align*} 
    e_G(A,B) & \leq |L'|\nst \cdot (\ell\nst + \eps n) \leq \frac{50s\eps}{d} \ell^2 \nst^2 + \frac{50s\eps}{d} \eps\ell \nst n \leq \frac{100s\eps}{d} n^2.
\end{align*}
In this case, Lemma~\ref{l:disconnected} yields the desired $K_r$-factor.

Hence we assume that after removing any $\frac{50s\eps}{d}\ell$ parts, the remaining $(\eps, d, d)$-reduced graph $R$ is still connected. 
We also remark that, even after removing any $\frac{50 s \eps}{d}\cdot \ell$ parts, the graph $R$ satisfies the hypotheses of Lemma~\ref{l:diameter}. Indeed, the fact that every vertex of $R$ is still in a triangle follows from property~\ref{it:st-reduced-graph-mindeg} of Lemma~\ref{l:st-reduced-singular}.
This also implies that for any $S_i$ and $S_j$ in $L_2 \cup L_3$, there is an even trail of length at most 8.
 
As in the previous proof, we let $p = C n^{-2/s}(\log n)^{-2/(s(s-1))}$ and expose three independent graphs $G_i \sim G(n, p/4)$ for $i = 1,2,3$. The union $G_1 \cup G_2 \cup G_3$ can be viewed as a subgraph of $\Gnp$.
	
\subsubsection*{Covering the vertices in $X$.} The graph $G_1$ will be used to cover $X$. As in the previous proof, we let $Z$ to be a random subset which contains each element of $V(G) \setminus X$ independently with probability $\vartheta = \frac d4$. We assume that $|Z \cap S_i| \leq 3 \vartheta |S_i| /2$ for all $i$. By the previous argument, $Z$ can be chosen so that there is a collection of at most $|X|$ vertex-disjoint $K_r$-copies in $Z \cup X$ covering $X$. Let $X^+$ be the vertex set of this partial $K_r$-factor, and denote  $S_i' = S_i \setminus X^+$ for $i \in L_2 \cup L_3$. We have
\begin{equation}
    |X^+|\leq 2s\eps n \quad \text{and} \quad |S_i'|\geq \left(1-\frac{3\vartheta}{2}\right)n_*.
\end{equation}

\subsubsection*{Ensuring divisibility by $s$} In the first balancing round, we will ensure that the remaining parts have order divisible by $s$ by removing at most $2s\ell$ copies of $K_{r}$. This will be done using a sequence of \textit{alterations}, which we introduce later.
 
Let $G_2 \sim G(n, p/4)$, and assume that $G_2$ has the following property (which typically occurs, and is shown immediately hereafter). 
\begin{description}
\item[(P3)] Consider any three parts  $S_{i_1}, S_{i_2}, S_{i_3}$ such that $\{S_{i_1}, S_{i_2}\}$ and $\{S_{i_2}, S_{i_3}\}$ are edges in $R$. If $B_j \subset S_{i_j}$ satisfy $|B_j| \geq |S_{i_j}|/2$ for $j \in [3]$, then there is a copy of $K_{r}$ in $G \cup G_2$ with one vertex in $B_1$, $s$ vertices in $B_2$ and $s-1$ vertices in $B_3$.
\end{description}

This property follows from Lemma~\ref{l:regular-pairs-factors}, although that argument is of course an overkill. Instead, one can use Janson's inequality to find suitable $K_s$-copies in $G_2$ and combine them using regularity. We remark that the property can also be used with $S_{i_1} = S_{i_3}$, yielding a~copy of $K_{r}$ intersecting $S_{i_1}$ and $S_{i_2}$ in exactly $s$ vertices.

Given a set $Y \subset V(G)$, we call a part $S_i$ (or its index $i \in L_2 \cup L_3$) \emph{available} (in $G-Y$) if $|S_i \cap Y | \leq \vartheta \nst$, with $\vartheta = d/4$. Otherwise we call it \emph{unavailable}.

\begin{claim}
    \label{claim:available}
    Let $Y \subset V(G)$ with $|Y|\leq 10s\eps n$. In the graph $G-Y$, there are at most $\frac{50s\eps }{ d}\cdot \ell$ unavailable parts.
\end{claim}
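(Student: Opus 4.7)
The proof should be a short double-counting argument based purely on the definitions. My plan is to bound the total number of vertices of $Y$ that can be concentrated on the unavailable parts, and then divide by the minimum ``unavailability threshold'' $\vartheta n_*$.

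More concretely, by Lemma~\ref{l:st-reduced-singular}\,\ref{it-ST-sizes-sing}, all parts $S_i$ with $i\in L_2\cup L_3$ have the common size $n_*$, and they cover all but at most $\eps n$ vertices of $G$. Hence $\ell\, n_* \geq (1-\eps)n$, which gives the lower bound $n_* \geq (1-\eps)n/\ell$. By the definition of unavailability, each unavailable part contributes more than $\vartheta n_* = (d/4)\, n_*$ vertices to $Y$. Since these contributions come from disjoint sets, if $u$ denotes the number of unavailable parts then
\[
 u \cdot \frac{d}{4}\, n_* \;<\; |Y| \;\leq\; 10 s\eps n.
\]
Rearranging and using $n_* \geq (1-\eps)n/\ell$, we obtain
\[
 u \;<\; \frac{40 s\eps n}{d\, n_*} \;\leq\; \frac{40 s\eps \ell}{d(1-\eps)} \;\leq\; \frac{50 s\eps}{d}\cdot \ell,
\]
provided $\eps$ is small enough (which it is by the hierarchy $\gamma \gg d \gg \eps$ fixed earlier in the proof of Lemma~\ref{l:non-extremal}).

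There is no real obstacle: the claim is a one-line pigeonhole bound. The only small subtlety is making sure the constants fit together cleanly, i.e.\ that the $\frac{40}{1-\eps}$ factor is absorbed into the stated $50$; this is automatic once $\eps \leq 1/5$, which is certainly guaranteed by the choice of constants at the beginning of the singular-case proof.
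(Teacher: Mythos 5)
Your proposal is correct and is essentially identical to the paper's argument: the paper also bounds $|Y| \geq u\,\vartheta n_* \geq u\,\vartheta(1-\eps)n/\ell \cdot \ell$ (using $\ell n_* \geq (1-\eps)n$ from Lemma~\ref{l:st-reduced-singular}) and rearranges with $\vartheta = d/4$ and $\eps \ll d$ to absorb the $\frac{40}{1-\eps}$ into the stated $50$. Nothing further is needed.
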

    
\begin{proof}
    If the number of unavailable parts is $\alpha \ell$, then the number of removed vertices $|Y|$ is at least
    $$ |Y| \geq \alpha \ell \vartheta \nst \geq \alpha \vartheta \cdot n(1-\eps).$$

    Rearranging and using $\vartheta = d/4$ and $\eps \ll d$, we get
    $$\alpha \ell \leq \frac{|Y|}{\vartheta n(1-\eps)}\cdot \ell \leq \frac{40 s \eps}{d(1-\eps)}\cdot \ell \leq \frac{50s\eps}{d}\cdot\ell .$$
\end{proof}

    It follows that if $|Y|\leq 10s\eps n$, then for any $S_i$ and $S_j$ in $L_2 \cup L_3$, there is an even trail in $R$ of length at most 8 connecting $S_i$ and $S_j$ and consisting only of parts which are available in $G-Y$.
    
    Next, given $Y\subset V(G)$, \emph{performing an $(i,j,k)$-removal} means removing a copy of $K_{r}$ which uses one vertex from $S_i\setminus Y$, $s$ vertices from $S_j\setminus Y$ and $s-1$ vertices from $S_k\setminus Y$, and adding those $2s$ vertices to $Y$. Furthermore, an \emph{$(i,k)$-alteration} is a move of the following form. Given parts $S_i$ and $S_k$, choose an even trail $P = S_{i_0}S_{i_1}\ldots S_{i_{|P|}}$ with $i_0 = i$, $i_{|P|}=k$, and $|P| \leq 8$. Then perform a sequence of removals $(i_0, i_1, i_2)$, $(i_2, i_3, i_4)$, ..., $(i_{|P-2|}, i_{|P-1|}, i_{|P|})$. The sizes of the sets $S_i$ and $S_k$ change by $-1$ and $+1$ modulo $s$, respectively, whereas all other parts remain intact modulo $s$.
    
    This move will now be repeatedly used to ensure divisibility by $s$. To this end, set $Y=X^+$, and for $i \in L_2 \cup L_3$, let $r_i$ be an integer such that $|S_i'| \equiv r_i \pmod{s}$, $|r_i| < s$ and $\sum_{i \in L_2 \cup L_3} r_i = 0$. (Note that such a choice of $r_i$'s exists since the total number of vertices is divisible by $2s$ throughout the removal process). Let $r=\max_i |r_i|$ and choose $r$ auxiliary matchings $\pi_1, \dots, \pi_r$ on $L_2 \cup L_3$ (regardless of the reduced graph $R$) such that for $\pi=\pi_1\cup \ldots\cup \pi_r$ each edge $ik$ of $\pi$ satisfies $r_i>0$ and $r_k<0$ (or $r_i<0$ and $r_k>0$), and each index $i$ has degree $|r_i|$ in $\pi$. Next, for each edge $ik$ in $\pi$ (in any order), perform an $(i,k)$-alteration. Note that this is possible since at most $s\ell/2$ alterations are performed, removing at most $4s^2 \ell$ vertices in total. Hence, using Claim~\ref{claim:available}, between any $S_i$ and $S_k$ we have a sufficient trail. Let $S_i''\subset S_i$ be the resulting parts. Then for each $i\in L_2\cup L_3$ we have $|S_i''| \equiv 0 \pmod{s}$. Moreover
    $$ \quad |S_i''| \geq \left(1 - 7\vartheta/4\right) n_* \quad \text{and} \quad |Y| \leq 5s\eps n/2.$$

\subsubsection*{Ensuring divisibility by $2s$} 
    Let $L^*$ be the set of parts whose size is \textit{not} divisible by $2s$ (so it is congruent to $s$ modulo $2s$). Choose an auxiliary perfect matching $\rho$ on $L^*$. Now, for each edge $ik$ in the matching $\rho$, perform the following move. Choose a path $P$ between $S_i$ and $S_k$ of length at most $8$ in $R$, and for each edge $S_{j}S_{j'}$ in $P$, perform a~$(j,j',j)$-removal. After this procedure, the size of $S_i''$ and $S_k''$ change by $s$, and the sizes of the remaining parts remain intact modulo $2s$.

    Since $\rho$ has at most $\ell/2$ edges, we removed at most $8 \cdot 2s \cdot \ell/2 \leq 8s \ell$ vertices in this round. Denote the remaining subsets by $S_i^* \subset S_i''$. Then for each $i\in L_2\cup L_3$ we have $|S_i^*| \equiv 0 \pmod{2s}$. Moreover
    $$ \quad |S_i^*| \geq \left(1 - 15\vartheta/8\right) n_* \quad \text{and} \quad |Y| \leq 3s\eps n.$$

    \subsubsection*{Balancing the `regular pairs'}
    At this point we need some additional notation. Let $M$ be a~perfect matching on $L_2$ and let $T$ be a triangle-factor on $L_3$ granted by Lemma~\ref{l:st-reduced-singular}~(iii).

    We will now pass to subsets $S_i^\dagger \subset S_i^*$ for $i \in L_2$ satisfying $|S_i^\dagger| = |S_{j}^\dagger| \equiv 0 \pmod{s}$ for $ij \in M$. Note that 
    since we already removed in total at most $3s\eps n$ vertices from the original parts,
    \begin{equation}
        \label{eq:diff-singular}
        \sum_{ij \in M}||S_i^*|-|S_j^*|| \leq 3s \eps n.
    \end{equation}

    Now take an arbitrary edge $ij$ of $M$, and assume that $|S_i^*|>|S_j^*|$. Note that
    $|S_i^*| - |S_{j}^*| \equiv 0 \pmod{2s}$ and $|S_i^*| - |S_{j}^*| \leq 2\vartheta \nst$.
    As long as the leftover of $S_i^*$ is larger that $S_j^*$, we repeat the following move.
    Choose an available part $S_k$ with $k \in L_3$ which is adjacent to $S_i$ in $R$ -- such a part exists since (by property~\ref{it:st-super-regular} of Lemma~\ref{l:st-reduced-singular}) $S_i^*$ has at least $d \ell$ neighbours in $L_3$ and (using Claim~\ref{claim:available}) the number of unavailable parts is at most $\frac{50s\eps}{d}\cdot\ell$. Perform two $(i,k,i)$-removals.
    
    The resulting sets have sizes divisible by $2s$, and we have performed at most $3 \eps n$ removals, losing at most $6s \eps n$ vertices.
    
    Let $G^\dagger$ be the leftover graph with parts $S_i^\dagger \subset S_i^*$, and note that as claimed, $|V(G)\setminus V\left(G^\dagger \right)| \leq 9s\eps n$. Moreover, at most $2\vartheta\nst = d\nst/2$ vertices are removed from each part, so the remaining pairs and triples corresponding to $M$ and $T$, respectively, are $(2\eps, (d/2)^+, d/2)$-super-regular.

    It remains to split each regular triple $(S_i^\dagger, S_j^\dagger, S_k^\dagger)$ with $ijk \in T$ into three balanced pairs, as follows. Fix a triple $(S_1^\dagger, S_2^\dagger, S_3^\dagger)$, and let $x_1, x_2$ and $x_3$ satisfy
    \begin{align*}
        x_1 + x_2  &= |S_{1}^{\dagger}|\,, \\
        x_2 + x_3  &= |S_{2}^{\dagger}|\,, \\
        x_3 + x_1  &= |S_{3}^{\dagger}|\,.
    \end{align*}
    Since $x_1 = \frac 12 \left( |S_{1}^{\dagger}| + |S_{3}^{\dagger}|- |S_{2}^{\dagger}| \right)$, we have $x_1 \geq \frac {\nst}{2}(1-d) \geq \frac{\nst}{3}$, and $x_1$ is an integer divisible by $s$. The same holds for $x_2$ and $x_3$.

    Now we can split $(S_1^{\dagger}, S_{2}^{\dagger}, S_{3}^{\dagger})$ into three pairs $(U_j, W_j)$, $j\in[3]$, with $|U_j| = |W_j| = x_j$, and such that $(U_j, W_j)$ is an $(8\eps, (d/3)^+, (d/8))$-super-regular pair. (Technically, we can take a uniformly random partition and use Chernoff-type bounds for the hypergeometric distribution. See~\cite{skala2013hypergeometric}.) 

    \subsubsection*{Covering the regular pairs}
    We obtain a collection of $(8\eps, (d/3)^+, (d/8))$-super-regular pairs which satisfy the hypothesis of Lemma~\ref{c:st-reg-pairs-factors}. Now we can expose the random graph $G_3$ and apply Lemma~\ref{c:st-reg-pairs-factors} to obtain a $K_r$-factor in $G ^\dagger \cup G_3$. This completes the proof.
\end{proof}

Now we are ready to prove Lemma~\ref{l:disconnected}. We remark that the logarithmic factor in $p$ is probably not needed, but is an artefact of using Lemma~\ref{l:x-cover}.

\begin{proof}
    As before, let $G_1 \sim G_{n, p/3}$ and $G_2 \sim G_{n, p/3}$ be mutually independent random graphs.
    
    Let $X_A$ be the set of vertices in $A$ with more than $\sqrt{\xi}n$ neighbours in $B$, and define $X_B$ similarly. We have $|X_A|\cup |X_B| \leq 2\sqrt{\xi} n$, as otherwise there would be more than $\xi n^2$ edges between $A$ and $B$. Let $X = X_A \cup X_B$.
    
    We claim that $X$ and $G$ satisfy the hypothesis of Lemma~\ref{l:x-cover} with $\eps_{\ref{l:x-cover}} = 2\sqrt{\xi}$ and $\nu$ a constant depending only on $s$. To verify this, let $|Y | \leq 4s\eps n$ and fix $x \in X$. Let $A_x \subset N_G(x)\setminus Y$ be an arbitrary subset of order $n/4$. In the bipartite graph $G[A_x, V(G) \setminus (A_x \cup Y \cup \{ x\})]$, every vertex from $A_x$ has at least $n/8$ neighbours. Hence, using the supersaturation version of the K\H{o}v\'ari--S\'os--Tur\'an Theorem~\cite{kst54}, one may find a collection of $\nu n^{2s-1}$ copies of $K_{s-1, s}$ with the larger class contained in $N_G(x)$. Thus $|\mathcal{B}_x[V(G)\setminus Y]| \geq \nu n^{2s-1}$ and, by Lemma~\ref{l:x-cover}, $X$ can be covered by a partial $K_{r}$-factor in $G \cup G_1$. Let $X^+$ be the vertices contained in this $K_{r}$-factor.
    
    Let $A' = A \setminus X^+$ and $B' = B \setminus X^+$. Notice that since $G[A'\cup B']$ is a graph with minimum degree at least $(1/2-4s\sqrt{\xi}n)$ and each $x\in A'$ has at most $\sqrt{\xi}n$ neighbours in $B'$, we have $|A'| \geq (1/2-(4s+1)\sqrt{\xi})n$, $|B'| \leq (1/2+(4s+1)\sqrt{\xi})n$ and $\delta(G[A']) \geq (1/2-(4s+1)\sqrt{\xi})n$. 
    By symmetry, we also have $|A'| \leq (1/2+(4s+1)\sqrt{\xi})n$, so the minimum degree in $G[A']$ satisfies
    \begin{equation}
        \frac{\delta(G[A'])}{|A'| } \geq \frac{1/2-(4s+1)\sqrt{\xi}}{1/2+(4s+1)\sqrt{\xi}} \geq 1-4(4s+1)\sqrt{\xi}.
    \end{equation}
    The same conclusion holds for $B'$.

    The next step is to ensure divisibility by $2s$, using the second random graph $G_2$. By the same argument as for Lemma~\ref{l:ks-in-lin-sized} (Janson's Inequality), one can show that at our value of $p$, $G_2$ has the following property with high probability.

    \begin{description}
        \item[(P)] For any two $A_1 \subset A$ and $B_1 \subset B$ of order at least $n/4$, there is a copy of $K_{r}$ in $G \cup G_2$ intersecting $A_1$ in $s+1$ vertices and $A_2$ in $s-1$ vertices. 
    \end{description}
    
    (We remark that unlike in the previous case, now the edges between $A_1$ and $B_1$ will come from the random graph $G_2$.)

    Hence we assume that $G_2$ satisfy \textbf{(P)}. Since $|A'|+|B'|$ is divisible by $2s$, there is a positive integer $a < 2s$ such that $|A'| \equiv  - |B'| \equiv a \pmod{2s}$. We can repeatedly apply \textbf{(P)} to remove $a$ copies of $K_{r}$ with $s+1$ vertices in $A'$ and $s-1$ vertices in $B'$.
    Let $A^* \subset A'$ be the remaining subset of $A'$. For sufficiently large $n$, the remaining graph $G[A^*]$ has minimum degree at least $(1- 5(4s+1)\sqrt{\xi})|A^*|.$ By the theorem of Hajnal and Szemer\'edi~\cite{Hajnal1970}, assuming $5(4s+1)\sqrt{\xi}< 1/(2s)$, $G[A^*]$ has a $K_{r}$-factor. The same conclusion holds for $G[B^*]$. This completes the proof of the Lemma.
\end{proof}

\section*{Acknowledgements} 
N.K. is supported by the Croatian Science Foundation, project number HRZZ-IP-2022-10-5116 (FANAP).

We would like to thank Hiêp Hàn for initial discussions regarding this project. We are also thankful to Mathias Schacht and the Department of Mathematics at the University of Hamburg for hosting S.A. and N.K.

\bibliographystyle{plain}

\end{document}